\documentclass[11pt]{amsart}
\usepackage[english]{babel}
\usepackage[T1]{fontenc}
\usepackage[ansinew]{inputenc}
\usepackage{amsmath}
\usepackage{amsfonts}
\usepackage{ mathrsfs }
\usepackage{amssymb}
\usepackage{textcomp}
\usepackage{enumitem}
\usepackage[hidelinks]{hyperref}
\usepackage[arrow, matrix, curve]{xy}
\usepackage{comment}
\usepackage{color}
\usepackage{verbatim}
\usepackage{tikz}
\usetikzlibrary{decorations.pathreplacing}
\usepackage{amsthm}
\usepackage{stmaryrd}

%%%%%%%%%%%%%%%%%%%%%%%%%%%%%%%%%%%%%%%%%%%%%%%%%%%%%%%%%%%%%%%%%%%%%%%%%%%%%%%%%%%%%%%%%%%%%%%%%%%%%%%%%%%%%%%%%%%%%%%%%%%%%%%%%%%

\addtolength{\oddsidemargin}{-1cm}
\addtolength{\evensidemargin}{-1cm}
\addtolength{\textwidth}{2cm}
\addtolength{\topmargin}{-2cm}
\addtolength{\textheight}{2cm}

\numberwithin{paragraph}{section}
\numberwithin{equation}{section}

\newtheorem{satz}{Theorem}[section]
\newtheorem{Thm}[satz]{Theorem}
\newtheorem{lem}[satz]{Lemma}

\newtheorem{prop}[satz]{Proposition}

\newtheorem{kor}[satz]{Corollary}
\theoremstyle{definition}
\newtheorem{defn}[satz]{Definition}
\newtheorem{bem}[satz]{Remark}
\newtheorem{nurnum}[satz]{}
\newtheorem{Ex}[satz]{Example}
\newtheorem{theorem}{Theorem}[]
\newtheorem{thm}[theorem]{Theorem}
\newtheorem{koro}[theorem]{Corollary}

%\swapnumbers
%BGJKM

\newcommand{\N}{\mathbb{N}}
\newcommand{\Z}{\mathbb{Z}}

\newcommand{\R}{\mathbb{R}}
\newcommand{\C}{\mathbb{C}}

\newcommand{\Xan}{X^{\an}}

	\DeclareMathOperator{\an}{an}

	\DeclareMathOperator{\Hom}{Hom}
	
	\DeclareMathOperator{\Spec}{Spec}

	\DeclareMathOperator{\supp}{supp}

% sheaves

\DeclareMathOperator{\CS}{\mathcal{C}}

\DeclareMathOperator{\US}{\mathcal{U}}

% rings etc

\DeclareMathOperator{\PB}{\mathbb{P}}

\DeclareMathOperator{\HM}{\mathbb{H}}

%\DeclareMathOperator{\MM}{\mathbb{M}} Conflicting with Philipp's Operators

% Fraktur

% \DeclareMathOperator{\FF}{\mathfrak{F}} doesn't work

\def\quotient#1#2{\raise0.75ex\hbox{$\,#1$}\big/\lower0.75ex\hbox{$#2\,$}}

\title[Energy Minimization Principle for non-archimedean curves]{Energy Minimization Principle for non-archimedean curves}

\author[V.~Wanner]{Veronika Wanner}
\address{V. Wanner, Mathematik, Universit{\"a}t 
Regensburg, 93040 Regensburg, Germany}
\email{veronika.wanner@mathematik.uni-regensburg.de}

\setcounter{tocdepth}{1}

\thanks{The author was supported by the collaborative research center SFB 1085 'Higher Invariants' funded by the Deutsche Forschungsgemeinschaft.
 }

\begin{document}
\begin{abstract}
Baker and Rumely defined a notion of Arakelov--Green's functions on the Berkovich analytification of the projective line and established an Energy Minimization Principle.
We extend their definition and show their Energy Minimization Principle for general smooth projective curves.
As an application we get a generalization and a different proof of an equidistribution result by Baker and Petsche.
\bigskip

\noindent
MSC: Primary 32P05; Secondary 14G22, 14T05, 32U05, 32U40
\bigskip

\noindent
Keywords: Potential theory,  Berkovich spaces, Equidistribution
\end{abstract}
\maketitle 
\tableofcontents

\section{Introduction}

Potential theory is a very old area of mathematics and has been extended
to non-archimedean analytic geometry by many different authors. In the one-dimensional case this is for example done by Favre and Jonsson in \cite{FJ} for the Berkovich projective line (indeed for any metric $\R$-tree), by  Thuillier in \cite{Th} for general analytic curves and
by Baker and Rumely in \cite{BR} also for the Berkovich projective line. 
One important theorem in potential theory is the so called  Energy Minimization 
Principle.
There are independent approaches of a non-archimedean version of this principle in the case of  the Berkovich projective line $\mathbb{P}^{1,\an}$, one by Favre and Rivera-Letelier in \cite{FR} and one by 
Baker and Rumely established in  \cite{BR}.
Both results are respectively used in \cite{FR} and in \cite{BR2} as key tools for non-archimedean equidistribution results.

In this paper, we generalize Baker and Rumely's approach and extend all of their needed notions to the Berkovich analytification $\Xan$ of a smooth projective curve $X$ over an algebraically closed non-archimedean field $K$. As an application we get a generalization and a different proof of an equidistribution result by Baker and Petsche in \cite{BP}. This work is part of the author's thesis \cite{Wa}.
As Baker and Rumely's non-archimedean potential theory is only established  for the Berkovich analytification $\mathbb{P}^{1,\an}$ of the projective line, we work in Thuillier's general theory which he developed in his thesis \cite{Th}.
Most important to us are his class of smooth functions $A^0$ with its corresponding measure valued Laplacian $dd^c$ and his class of subharmonic functions.

For the  Energy Minimization Principle, we need Arakelov--Green's functions $g_\mu$ defined on~$\Xan\times\Xan$ for given  probability measures $\mu$ on~$\Xan$ with continuous potentials analogous to the complex geometrical setting. For the definition of having continuous potentials we refer to Definition \ref{Dlog-c}. This condition assures $g_\mu$ to be well-defined and to be lower semi-continuous on $\Xan\times\Xan$. 
Complex  Arakelov--Green's functions are characterized by a special list of properties. We extend the construction of Arakelov--Green's function from \cite[\S 8.10]{BR} to our general smooth projective curve $X$ such that the following analogous list is satisfied:

\begin{thm}
	For a probability measure $\mu$ on~$\Xan$ with continuous potentials, there exists a unique symmetric function $g_\mu\colon \Xan\times \Xan \to (-\infty,\infty] $ such that the following holds.
	\begin{enumerate}
		\item(Semicontinuity) The function $g_\mu$  is finite and continuous off the diagonal and strongly lower semi-continuous on the diagonal in the sense that 
		$$g_\mu(x_0,x_0)= \liminf _{(x,y)\to (x_0,x_0) ,x\neq y}g_\mu(x,y).$$
		\item (Differential equation) For each fixed $y\in \Xan$ the function  $g_\mu(\cdot,y)$ satisfies 
		  $$ dd^c g_\mu(\cdot,y)=\mu - \delta_y,$$ i.e. $\int  g_\mu(x,y)~ (dd^c f)(x) = \int f~d(\mu-\delta_y)(x)$ for all $f\in A_c^0(\Xan)$.
		\item(Normalization)
		$$ \int\int g_\mu(x,y)~d\mu(x)d\mu(y)=0.$$\end{enumerate}
\end{thm}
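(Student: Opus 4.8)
The plan is to follow the blueprint of \cite[\S 8.10]{BR}, with the Hsia kernel on $\mathbb{P}^{1,\an}$ replaced by a ``fundamental kernel'' on $\Xan$. First I would fix an auxiliary probability measure $\nu_0$ on $\Xan$ with continuous potentials (for instance one supported on a skeleton of $X$ arising from a semistable model) and construct a symmetric function $\kappa\colon\Xan\times\Xan\to(-\infty,\infty]$ which is finite and jointly continuous off the diagonal, strongly lower semi-continuous on the diagonal in the sense of~(1), bounded below, and satisfies $dd^c_x\kappa(\cdot,y)=\nu_0-\delta_y$ for every fixed $y$, with $\kappa(\cdot,y)$ having the unavoidable $-\log$-type behaviour at $y$ (blowing up to $+\infty$ precisely when $y\in X(K)$). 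Existence of $\kappa(\cdot,y)$, unique up to an additive constant, is the solvability of the Poisson equation in Thuillier's theory \cite{Th}, and normalizing by $\int\kappa(\cdot,y)\,d\nu_0=0$ makes $\kappa$ symmetric via the self-adjointness of $dd^c$ (a reciprocity identity); the joint regularity off the diagonal and the strong lower semi-continuity on it are local assertions, checked on small analytic domains by comparison with the Hsia kernel of \cite{BR}.

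Given $\mu$ with continuous potentials, put $u_\mu(x):=\int_{\Xan}\kappa(x,y)\,d\mu(y)$. By Definition~\ref{Dlog-c} this function is continuous, hence bounded on the compact space $\Xan$, and a Fubini argument for $dd^c$ yields $dd^c u_\mu=\nu_0-\mu$. I would then set
$$g_\mu(x,y)\;:=\;\kappa(x,y)-u_\mu(x)-u_\mu(y)+\iint_{\Xan\times\Xan}\kappa(z,w)\,d\mu(z)\,d\mu(w),$$
which is symmetric by inspection. Property~(1) is inherited verbatim from $\kappa$, since $u_\mu$ is continuous and the last term is a finite constant. Property~(2) is the linearity computation $dd^c_x g_\mu(\cdot,y)=(\nu_0-\delta_y)-(\nu_0-\mu)=\mu-\delta_y$, interpreted weakly against $A_c^0(\Xan)$. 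Property~(3) holds because, with $V_\mu:=\iint\kappa\,d\mu\,d\mu=\int u_\mu\,d\mu$, each of the four terms of $g_\mu$ integrates against $d\mu\,d\mu$ to $\pm V_\mu$, and these cancel.

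For uniqueness, suppose $g_\mu$ and $g_\mu'$ both satisfy (1)--(3). For fixed $y$ the difference $h_y:=g_\mu(\cdot,y)-g_\mu'(\cdot,y)$ has $dd^c h_y=0$ by~(2); the singularities of the two functions at $y$ coincide (their coefficient is pinned to $1$ by the $-\delta_y$ term), so $h_y$ is continuous away from $y$ and locally bounded at $y$, hence by the removable-singularity theorem and the maximum principle on the connected compact space $\Xan$ it is a constant $c(y)$. Symmetry of $g_\mu$ and of $g_\mu'$ then forces $c(y)=g_\mu(x,y)-g_\mu'(x,y)=g_\mu(y,x)-g_\mu'(y,x)=c(x)$ for all $x,y$, so $c$ is a constant $c_0$; integrating against $d\mu\,d\mu$ and using~(3) for both functions gives $c_0=0$. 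In particular $g_\mu$ does not depend on the auxiliary choices of $\nu_0$ and $\kappa$.

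I expect the real work to be the construction of $\kappa$, especially the verification of its semicontinuity along the diagonal: on $\mathbb{P}^{1,\an}$ this rests on the substantial theory of the Hsia kernel in \cite{BR}, and carrying it over to an arbitrary curve requires either a gluing argument over a semistable model or a direct study of $dd^c$-harmonic functions with a single logarithmic pole. The remaining inputs --- a Fubini theorem for $dd^c$, the reciprocity identity $\int u\,dd^c v=\int v\,dd^c u$, the removable-singularity theorem, and the maximum principle for harmonic functions on $\Xan$ --- I would take as black boxes from Thuillier's thesis \cite{Th}.
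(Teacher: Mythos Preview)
Your proposal is correct and follows essentially the same approach as the paper: the paper takes $\nu_0=\delta_{\zeta_0}$ for a fixed type~II or~III point $\zeta_0$, so that your $\kappa$ becomes the potential kernel $g_{\zeta_0}$ constructed explicitly in Section~\ref{Potentialkernel} via skeleta and the skeletal metric, and your formula for $g_\mu$ and the uniqueness argument coincide with Definition~\ref{D Arakelov} and Remark~\ref{AGunique}. As you anticipate, the bulk of the work is the construction and regularity of the kernel, which the paper carries out directly (Lemma~\ref{g}, Proposition~\ref{G1}, and the case analysis by type in Proposition~\ref{G} for the strong lower semi-continuity).
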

The function $g_\mu$ is called the \emph{Arakelov--Green's function} corresponding to $\mu$. 
With the help of $g_\mu$, we can define the \emph{$\mu$-energy integral} of an arbitrary probability measure $\nu$ on~$\Xan$ as 
$$I_\mu(\nu):=\int \int  g_{\mu}(x,y) ~d\nu (y)d\nu(x).$$
 In Theorem~\ref{THM} we formulate and prove the following Energy Minimization Principle analogous to the one in complex potential theory and \cite[\S 8.10]{BR}:

\begin{thm}[Energy Minimization Principle]\label{Intro EMP}
		Let $\mu$ be a probability measure on~$\Xan$ with continuous potentials. Then 
	\begin{enumerate}
		\item $I_\mu(\nu)\geq 0$ for each probability measure $\nu$ on~$\Xan$, and 
		\item $I_\mu(\nu)=0$ if and only if $\nu=\mu$.
	\end{enumerate}
\end{thm}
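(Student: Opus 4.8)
The plan is to follow the classical potential-theoretic route, using the Arakelov--Green's function $g_\mu$ from the previous theorem to rewrite $I_\mu(\nu)$ in a manifestly non-negative form. First I would reduce to the case where $\nu$ also has continuous potentials: since every probability measure on $\Xan$ is a weak limit of measures with continuous potentials (e.g. discretizations supported on type-II points, or smoothings), and since $g_\mu$ is lower semi-continuous on $\Xan \times \Xan$, the energy functional $\nu \mapsto I_\mu(\nu)$ is lower semi-continuous with respect to weak convergence; hence establishing $I_\mu(\nu) \geq 0$ for a dense class suffices for part (i), and for part (ii) one argues that if $I_\mu(\nu) = 0$ then $\nu$ cannot be approximated by measures of strictly positive energy unless $\nu = \mu$. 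The technical heart, though, is the case of nice $\nu$, so I would concentrate there.

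The key computation is the following. Write $h = g_\nu - g_\mu$, where $g_\nu$ is the Arakelov--Green's function attached to $\nu$ (which exists by the previous theorem once $\nu$ has continuous potentials). By property (ii) of the previous theorem, for fixed $y$ we have $dd^c_x\, h(x,y) = (\nu - \delta_y) - (\mu - \delta_y) = \nu - \mu$, so $h(\cdot, y)$ has the same Laplacian for every $y$; this forces $h(x,y) = u(x) + (\text{const in } x)$, and by symmetry $h(x,y) = u(x) + u(y) + c$ for a single function $u$ with $dd^c u = \nu - \mu$ and a constant $c$. Integrating against $\mu \otimes \mu$ and using the normalization (iii) for $g_\mu$ identifies $c$ and the additive normalization of $u$; one gets, after the dust settles, the identity
$$
I_\mu(\nu) = \int\!\!\int g_\mu(x,y)\, d\nu(x)\, d\nu(y) = \int\!\!\int g_\nu(x,y)\, d\mu(x)\, d\mu(y),
$$
and more usefully a \emph{symmetric} mutual-energy expression leading to
$$
I_\mu(\nu) = -\int u \, d(\nu - \mu) = -\int u \, (dd^c u),
$$
where $u$ is the potential difference above. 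Thus the whole question reduces to showing $\int u\,(dd^c u) \le 0$, i.e. that the Dirichlet-type pairing $\langle u, u\rangle := -\int u\,(dd^c u)$ is non-negative, with equality iff $u$ is constant (equivalently $dd^c u = 0$, i.e. $\nu = \mu$).

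For this positivity I would invoke Thuillier's potential theory on $\Xan$: the bilinear form $(f,g) \mapsto -\int f\,(dd^c g)$ on smooth functions is symmetric (integration by parts / self-adjointness of $dd^c$) and positive semi-definite — this is the non-archimedean analogue of $\int df \wedge d^c f \ge 0$ and is essentially the statement that $-dd^c$ is a non-negative operator, which on a metric graph/$\R$-tree model reduces to the elementary fact that the graph Laplacian energy $\sum_{e} (\text{slope along } e)^2 \ell(e)$ is non-negative. Concretely, approximating $u$ by piecewise-affine functions on finite skeleta $\Gamma$ of $\Xan$ (retractions), $-\int u\,(dd^c u)$ becomes the Dirichlet energy of the restriction $u|_\Gamma$ on the metric graph $\Gamma$, which is a sum of squares of slopes weighted by edge lengths; this is $\ge 0$ and vanishes exactly when $u|_\Gamma$ is locally constant for all $\Gamma$, hence $u$ constant. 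Passing to the limit (the energies are monotone in $\Gamma$, as in the classical capacity theory) gives $\langle u,u\rangle \ge 0$ with the stated equality case. The main obstacle I anticipate is the analytic bookkeeping in two places: (a) justifying the manipulations $\int g_\mu\,d\nu\,d\nu = -\int u\,d(\nu-\mu)$ when $\nu$ only has continuous (not smooth) potentials, which requires Fubini together with the symmetry and semicontinuity from the previous theorem and a careful regularization argument so that $dd^c$ may be applied; and (b) the limiting argument over skeleta for the positivity of the energy form, i.e. making precise that $-\int u\,(dd^c u) = \sup_\Gamma (\text{Dirichlet energy of } u|_\Gamma)$ and that this controls the equality case. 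Everything else — the reduction to continuous potentials, the identification of $u$, pinning down constants — is routine once the previous theorem is in hand.
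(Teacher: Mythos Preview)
Your approach is genuinely different from the paper's. The paper does \emph{not} rewrite $I_\mu(\nu)$ as a Dirichlet pairing $-\int u\,dd^c u$; instead it runs the classical equilibrium-measure argument. It first proves a non-archimedean Maria theorem (if the generalized potential $u_\nu(\cdot,\mu)=\int g_\mu(\cdot,y)\,d\nu(y)$ is $\le M$ on $\supp(\nu)$ then it is $\le M$ everywhere) and then a Frostman theorem: any minimizer $\omega$ of $I_\mu$ has $u_\omega(\cdot,\mu)$ identically equal to the Robin constant $V(\mu)=\inf_\nu I_\mu(\nu)$. Since $dd^c u_\omega(\cdot,\mu)=\mu-\omega$ and a constant has $dd^c=0$, this forces $\omega=\mu$; the normalization $I_\mu(\mu)=0$ then gives $V(\mu)=0$ and both (i) and (ii) at once. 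The advantage of the paper's route is that it treats \emph{arbitrary} $\nu$ uniformly, with no regularization and no integration by parts; the cost is the intermediate Maria/Frostman theorems, which need some capacity theory and a careful analysis of potentials along paths and skeleta. Your route is closer to Favre--Rivera-Letelier's and is conceptually cleaner once the energy identity is in hand.

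There is, however, a genuine gap in your reduction for part~(ii). Lower semi-continuity of $\nu\mapsto I_\mu(\nu)$ gives $I_\mu(\nu)\le\liminf I_\mu(\nu_n)$ for approximants $\nu_n$ with continuous potentials, which is the wrong inequality: nothing prevents all $I_\mu(\nu_n)$ from being strictly positive while $I_\mu(\nu)=0$ and $\nu\ne\mu$, so the sentence ``$\nu$ cannot be approximated by measures of strictly positive energy unless $\nu=\mu$'' has no force. To salvage your strategy you must run the Dirichlet argument directly for arbitrary $\nu$. The identity $I_\mu(\nu)=\int u\,d(\nu-\mu)$ with $u=u_\nu(\cdot,\mu)$ is fine (it needs only $\int g_\mu(\cdot,y)\,d\mu(y)\equiv 0$, which follows since this function is harmonic and integrates to $0$ against $\mu$). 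The hard part is your obstacle~(b): proving $\int u\,d(\nu-\mu)\ge 0$ with equality iff $\nu=\mu$ by passing to skeleta. Note that $u\circ\tau_\Gamma$ is a \emph{difference} $u_{\zeta_0,\nu}\circ\tau_\Gamma - u_{\zeta_0,\mu}\circ\tau_\Gamma$ of two functions each monotone in $\Gamma$, so monotonicity of the approximating energies and control of the equality case in the limit are not automatic and need a genuine argument. The paper's Maria--Frostman route avoids this entirely.
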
 
As a direct application of the Energy Minimization Principle, we can give a generalization and a different proof of the non-archimedean local discrepancy result from \cite{BP} for an elliptic curve $E$ over $K$.
Note that in \cite{BP} everything was worked out for $K$ coming from a number field. 
For our general $K$, we define the \emph{local discrepancy} of a subset $Z_n\subset E(K)$  consisting of $n$ distinct points as
$$D(Z_n):=\frac{1}{n^2}\left( \sum_{P\neq Q\in Z_n}g_{\mu_E}(P,Q) + \frac{n}{12}\log ^+|j_E|\right),$$ where $\mu_E$ is the canonical measure and $j_E$ is the $j$-invariant of $E$ (see Section \ref{SubLD} for definitions).
Note that this definition is consistent with the definition of local  discrepancy from \cite{BP} and \cite{Pe}.
We show in Corollary \ref{KorLD} the following generalization of \cite[Corollary 5.6]{BP} using the Energy Minimization Principle:
\begin{koro}
	For each $n\in \N$, let $Z_n\subset E(K)$  be a set consisting of $n$ distinct points and let $\delta_n$ be the probability measure on~$E^{\an}$ that is equidistributed on~$Z_n$. If $\lim_{n\to \infty} D(Z_n)=0$, then $\delta_n$ converges weakly to $\mu_E$ on~$E^{\an}$.
\end{koro}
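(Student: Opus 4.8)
The argument follows the classical route from an energy minimization principle to equidistribution. Since $E$ is projective, $E^{\an}$ is compact, and hence so is the space of probability measures on $E^{\an}$ in the weak topology; it therefore suffices to show that every weak limit point of the sequence $(\delta_n)_n$ equals $\mu_E$. So fix such a limit point $\nu$ and choose a subnet $(\delta_{n_\alpha})_\alpha$ with $\delta_{n_\alpha}\to\nu$ weakly (so in particular $n_\alpha\to\infty$). Recall that $\mu_E$ has continuous potentials (Section~\ref{SubLD}), so the Energy Minimization Principle (Theorem~\ref{THM}) applies to it; in particular $I_{\mu_E}(\nu)\geq 0$, and $\nu=\mu_E$ follows from part~(ii) as soon as we can show $I_{\mu_E}(\nu)\leq 0$. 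Thus the whole proof reduces to establishing this one inequality.

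To bound $I_{\mu_E}(\nu)$ we truncate $g_{\mu_E}$ so as to avoid its logarithmic pole along the diagonal. For $M>0$ put $g_M:=\min(g_{\mu_E},M)$. Since $g_{\mu_E}$ is continuous off the diagonal and strongly lower semi-continuous along it (property~(1) of the Arakelov--Green's function), for every diagonal point $(x_0,x_0)$ one has $g_{\mu_E}>M$ on a neighbourhood of $(x_0,x_0)$; hence $g_M$ is continuous on the compact space $E^{\an}\times E^{\an}$, and it is bounded below by $\inf g_{\mu_E}>-\infty$ (lower semi-continuity on a compact space). Writing $\delta_n=\tfrac1n\sum_{P\in Z_n}\delta_P$, using $g_M(P,P)=M$ and $g_M\leq g_{\mu_E}$ off the diagonal, and then recalling the definition of $D(Z_n)$, we obtain
$$\int\int g_M\,d\delta_n\,d\delta_n=\frac{1}{n^2}\sum_{P,Q\in Z_n}g_M(P,Q)\leq\frac{1}{n^2}\sum_{P\neq Q\in Z_n}g_{\mu_E}(P,Q)+\frac{M}{n}=D(Z_n)-\frac{\log^+|j_E|}{12n}+\frac{M}{n}.$$
Passing to the limit along the subnet, the left-hand side tends to $\int\int g_M\,d\nu\,d\nu$ (weak convergence of the product measures $\delta_{n_\alpha}\otimes\delta_{n_\alpha}$ together with continuity of $g_M$), while the right-hand side tends to $\lim_n D(Z_n)=0$. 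Hence $\int\int g_M\,d\nu\,d\nu\leq 0$ for every $M>0$.

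Finally, $g_M\uparrow g_{\mu_E}$ pointwise as $M\to\infty$ with the $g_M$ uniformly bounded below, so the monotone convergence theorem gives $I_{\mu_E}(\nu)=\lim_{M\to\infty}\int\int g_M\,d\nu\,d\nu\leq 0$. Combined with $I_{\mu_E}(\nu)\geq 0$ this forces $I_{\mu_E}(\nu)=0$, whence $\nu=\mu_E$ by Theorem~\ref{THM}(ii); as explained above, this proves that $\delta_n$ converges weakly to $\mu_E$. I expect the only delicate point to be the truncation step: one must check that $g_M$ is genuinely continuous across the diagonal --- which is exactly where the strong lower semi-continuity of $g_{\mu_E}$ is used --- and keep the constant term $\tfrac1{12}\log^+|j_E|$ under control so that it is absorbed, together with $M$, into the $O(1/n)$ error. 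The remaining ingredients (compactness of the space of probability measures, weak convergence of products, monotone convergence) are routine.
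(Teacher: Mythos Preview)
Your overall strategy coincides with the paper's: pass to a weak limit $\nu$, show $I_{\mu_E}(\nu)\leq 0$, and invoke the Energy Minimization Principle. Where the paper cites \cite[Lemma~7.54]{BR} for the inequality
\[
I_{\mu_E}(\nu)\;\leq\;\liminf_{n}\frac{1}{n^2}\sum_{P\neq Q\in Z_n}g_{\mu_E}(P,Q),
\]
you instead unfold a truncation argument. That is fine in spirit, but your justification for the continuity of $g_M=\min(g_{\mu_E},M)$ is wrong, and in fact $g_M$ is \emph{not} continuous for large $M$.

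The claim that ``$g_{\mu_E}>M$ on a neighbourhood of every diagonal point $(x_0,x_0)$'' fails whenever $x_0\notin E(K)$: by Proposition~\ref{G'} one has $g_{\mu_E}(x_0,x_0)<\infty$ for such $x_0$, so for $M>g_{\mu_E}(x_0,x_0)$ this is false. Worse, since $E(K)$ is dense in $E^{\an}$ there exist type~I points $a_n\to x_0$, and then $g_M(a_n,a_n)=\min(\infty,M)=M$ while $g_M(x_0,x_0)=g_{\mu_E}(x_0,x_0)<M$; hence $g_M$ is genuinely discontinuous at $(x_0,x_0)$. Strong lower semi-continuity of $g_{\mu_E}$ does not help here, because it only controls the $\liminf$, not the $\limsup$.

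The fix is painless: you do not need continuity, only lower semi-continuity. Since $g_{\mu_E}$ is lsc (Proposition~\ref{G}) and $M$ is constant, $g_M$ is lsc and bounded on the compact space $E^{\an}\times E^{\an}$. Weak convergence $\delta_{n_\alpha}\otimes\delta_{n_\alpha}\to\nu\otimes\nu$ then gives the one-sided Portmanteau inequality
\[
\int\!\!\int g_M\,d\nu\,d\nu\;\leq\;\liminf_{\alpha}\int\!\!\int g_M\,d\delta_{n_\alpha}\,d\delta_{n_\alpha},
\]
and the right-hand side is bounded above by your displayed estimate, hence by $0$ in the limit. This inequality (rather than an equality) is all your argument actually uses, so the rest of your proof goes through unchanged once you replace the continuity claim by lower semi-continuity.
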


\term{In this paper, let $K$ be an algebraically closed field endowed with a complete, 
non-archimedean, non-trivial absolute value $|~ |$.
A variety over $K$ is an irreducible separated reduced scheme of finite type over $K$ and a curve is a $1$-dimensional variety over $K$.}

\ack{The author would like to thank Matt Baker for the opportunity to visit him for a two months research stay and for suggesting the generalization of the Energy Minimization Principle to the author.
The author is also grateful to Walter Gubler for very carefully reading drafts of this work and for the helpful discussions.}

\section{Non-archimedean curves and their skeleta}

Let $X$ be an algebraic smooth projective curve  $X$ over $K$. 
Then by $\Xan$  we always denote the Berkovich analytification of $X$. 
We briefly recall the construction of this analytification. 

\begin{defn}\label{Xan} 
	For an open affine subset $U=\Spec(A)$ of  $X$, the analytification $U^{\an}$ is the set of all multiplicative seminorms on~$A$ extending the given absolute value $|~|$ on~$K$. We endow the set $U^{\an}$ with the coarsest topology such that $U^{\an}\to \R,~p\mapsto p(a)$ is continuous for every element $a\in A$. 
	By gluing, we get a topological space $\Xan$,
	which is connected,  compact and Hausdorff.
	We call the space $\Xan$ the \emph{(Berkovich) analytification} of $X$ which is a $K$-analytic space in the sense of \cite[\S 3.1]{BerkovichSpectral}.
	
\end{defn}
\begin{bem} 
	Note that the space $\Xan$ is in fact path-connected. 
	Here  a \emph{path} from  $x$ to $y$ is a continuous injective map $\gamma\colon[a,b]\to \Xan$
	with $\gamma(a)=x$ and $\gamma(b)=y$.
	If there is a unique path between two points $x,y\in \Xan$, we write $[x,y]$ for this path.
	We often use the notations $(x,y):=[x,y]\backslash \{x,y\}$, $(x,y]:=[x,y]\backslash \{x\}$ and $[x,y):=[x,y]\backslash \{y\}$.
\end{bem} 

\begin{bem}
	The points of $\Xan$ can be classified in four different types following  \cite[\S 1.4]{BerkovichSpectral}, \cite[\S 2.1]{Th} and \cite[\S 3.5]{BPR1}.
	The points of type I can be identified with the rational points $X(K)$.
	By $I(\Xan)$ we denote the subset of points of type II or III, and by $\HM(\Xan)$ the subset of points of type II, III and IV, i.e.~$\HM(\Xan)=\Xan\backslash X(K)$. For any subset $S$ of $\Xan$, we write $I(S):=S\cap I(\Xan)$ and $\HM(S):=S\cap \HM(\Xan)$.
	The sets $I(\Xan)$ and $X(K)$ are dense in $\Xan$. 
\end{bem}

\begin{bem}\label{boundary} When we talk about the boundary of a subset $W$ of $\Xan$, we always mean (if nothing is stated otherwise) the Berkovich boundary of $W$, which is the topological boundary in $\Xan$.
	For an affinoid domain the Berkovich boundary coincides with Shilov boundary and the limit boundary, and it is always a finite set of  points of type II or III in $\Xan$ (see \cite[Proposition 2.1.12]{Th} for definitions and a proof). If the affinoid domain is strictly affinoid, all boundary points are of type II.
\end{bem}

 Due to the nice properties of the topological space $\Xan$,  finite signed Borel measures are automatically regular.

\begin{prop}\label{Radon}\label{Proho}
	Every finite signed Borel measure on~$\Xan$ is a signed Radon measure.
In particular, every net $(\nu_\alpha)_\alpha$ of probability measures $\nu_\alpha$ on~$\Xan$ has a subnet that converges weakly to a probability measure $\nu$ on~$\Xan$.
\end{prop}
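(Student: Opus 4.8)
The statement has two parts; the second is a formal consequence of the first together with Riesz representation and Banach--Alaoglu, so the real content is that every finite signed Borel measure on $\Xan$ is Radon. By the Hahn--Jordan decomposition it suffices to treat a finite \emph{positive} Borel measure $\mu$, and such a $\mu$ is Radon as soon as it is inner regular with respect to compact sets and outer regular with respect to open sets. The obstruction to the usual compact-metric-space proof is that $\Xan$ need not be metrizable; the plan is to circumvent this using that $\Xan$ is an inverse limit of compact metrizable spaces, namely its skeleta (finite metric graphs) $\Gamma$, with continuous retractions $\tau_\Gamma\colon \Xan\to\Gamma$. In particular the topology of $\Xan$, hence its Borel $\sigma$-algebra, is generated by the sets $\tau_\Gamma^{-1}(U)$ with $U\subseteq\Gamma$ open.

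Fix a Borel set $B\subseteq\Xan$ and $\varepsilon>0$. Since a $\sigma$-algebra generated by an arbitrary family equals the union of the $\sigma$-algebras generated by its countable subfamilies, $B$ lies in the $\sigma$-algebra generated by $\tau_{\Gamma_1}^{-1},\tau_{\Gamma_2}^{-1},\dots$ for some sequence of skeleta $\Gamma_n$. As the skeleta form a directed system under refinement, a routine recursion (at each step adjoining a common refinement of each pair produced so far) lets us enlarge this sequence so that the $\Gamma_n$ form a countable directed subsystem; put $Y:=\varprojlim_n\Gamma_n$, a compact metrizable space, and let $\pi\colon\Xan\to Y$ be the natural continuous map, through which every $\tau_{\Gamma_n}$ factors. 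Then $B=\pi^{-1}(B')$ for some Borel set $B'\subseteq Y$. Since $Y$ is compact metric, the pushforward $\pi_*\mu$ is a finite positive Radon measure on $Y$, so there are a compact set $C'\subseteq B'$ and an open set $V'\supseteq B'$ with $(\pi_*\mu)(V'\backslash C')<\varepsilon$. Pulling back, $C:=\pi^{-1}(C')$ is closed in the compact space $\Xan$, hence compact, and satisfies $C\subseteq B$; likewise $V:=\pi^{-1}(V')$ is open with $V\supseteq B$; and $\mu(V\backslash C)=(\pi_*\mu)(V'\backslash C')<\varepsilon$. As $B$ and $\varepsilon$ were arbitrary, $\mu$ is inner regular by compacta and outer regular by open sets, i.e.\ Radon. (Alternatively, one may simply quote the corresponding statement for $\mathbb{P}^{1,\an}$ from \cite{BR} and run the same inverse-limit argument for $X$.)

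For the ``in particular'': by Riesz representation the finite signed Radon measures on $\Xan$ --- which by the first part are exactly the finite signed Borel measures --- form the dual $C(\Xan)^\ast$, with dual norm the total variation, and weak convergence of such measures is precisely weak-$\ast$ convergence of functionals. The probability measures constitute a subset of the closed unit ball cut out by the two weak-$\ast$-closed conditions $\mu\geq 0$ and $\mu(\Xan)=1$ (the latter closed because $\mathbf{1}\in C(\Xan)$); by Banach--Alaoglu the unit ball is weak-$\ast$ compact, hence so is this subset. Therefore every net of probability measures on $\Xan$ has a subnet converging weakly, and its limit is again a probability measure.

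I expect the only genuine obstacle to lie in the first one-and-a-half paragraphs: invoking the presentation of $\Xan$ as the inverse limit of its skeleta (a structural input I would cite), and then the soft set-theoretic point that any single Borel set --- and hence the entire regularity verification --- involves only countably many skeleta, which can be organized into a countable directed system whose inverse limit is compact metrizable. Everything after that is routine measure theory.
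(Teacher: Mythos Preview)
Your argument is correct, but the paper's proof is considerably shorter and proceeds along a different line. Rather than exploit the inverse-limit presentation of $\Xan$ over its skeleta, the paper observes that $\Xan$ is a locally compact Hausdorff space in which every open subset is $\sigma$-compact (citing \cite[(2.1.5)]{CLD}), and then invokes the standard regularity theorem for such spaces (\cite[Theorem~7.8]{Fo}) to conclude that every finite Borel measure is Radon. For the second assertion the paper simply cites Prohorov's theorem for nets (\cite[Theorem~A.11]{BR}), which is of course equivalent to your Riesz--Banach--Alaoglu packaging.

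Your route has the virtue of being closer to the specific geometry of $\Xan$ and of reducing to the metrizable case, where regularity is elementary; the cost is the extra machinery (the inverse-limit description, the countable-generation lemma for $\sigma$-algebras, and the cofinal-sequence construction). The paper's route trades this for one black-box citation about $\sigma$-compactness of opens in Berkovich spaces and one standard real-analysis theorem, yielding a two-line proof. Both are perfectly valid; the paper's is the expected short proof once one knows the $\sigma$-compactness input.
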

\begin{proof}
The first assertion follows by \cite[Theorem 7.8]{Fo}, as every open subset of the locally compact Hausdorff space $\Xan$ is the countable union of compact sets by \cite[(2.1.5)]{CLD}.
Since every probability measure is so a Radon measure, the second assertion follows by the Prohorov's theorem for nets (see for example \cite[Theorem A.11]{BR}). 
\end{proof}

\begin{nurnum}
Another important property of the analytification $\Xan$ of a smooth projective curve $X$ over $K$ is the existence of so called \emph{skeleta}.
Skeleta are deformation retracts of $\Xan$ and they have the structure of a metric graph.
We refer to  \cite{BPR2} for their definition via  semistable vertex sets.
Without loss of generality all our considered skeleta do not have any loop edges (cf.~\cite[Corollary 3.14]{BPR2}).
Note that their definition of skeleta is consistent  with Thuillier's notion.
For a skeleton $\Gamma$ of $\Xan$ we write $\Gamma_0$ for its vertex set and $\tau_\Gamma$ for its retraction map.
\end{nurnum}

\begin{prop}As sets we have 
$$I(\Xan)=\bigcup_{\Gamma \text{ skeleton of  }\Xan}\Gamma.$$
\end{prop}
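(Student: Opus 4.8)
The plan is to prove the set-theoretic identity $I(\Xan)=\bigcup_{\Gamma}\Gamma$ by establishing the two inclusions separately, using that skeleta are closed subsets of $\Xan$ consisting of points of type II and III together with the finite graph structure they carry.

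\textbf{The inclusion $\supseteq$.} Fix a skeleton $\Gamma$ arising from a semistable vertex set. By the recalled theory of \cite{BPR2}, every point of $\Gamma$ is of type II or III: the vertices $\Gamma_0$ are type II points (Shilov boundary points of the components of a semistable model), and the interior points of the edges are type II or III points lying on the standard intervals of the open annuli that make up $\Xan\setminus\Gamma_0^{\an}$. In particular $\Gamma\subseteq I(\Xan)$, so the union on the right is contained in $I(\Xan)$.

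\textbf{The inclusion $\subseteq$.} Conversely, let $x\in I(\Xan)$, i.e. $x$ is a point of type II or III. I would produce a semistable vertex set whose associated skeleton contains $x$. If $x$ is of type II, then by \cite[\S3]{BPR2} there is a semistable vertex set $V$ with $x\in V$, hence $x$ lies in the vertex set $\Gamma_0$ of the corresponding skeleton $\Gamma$. If $x$ is of type III, one uses that $x$ lies in the interior of an edge of the skeleton of any semistable vertex set; more precisely, by the classification of type III points as points on the skeleton of a suitable open annulus (a sub-annulus of $\Xan$), one can choose a semistable vertex set $V$ refined enough that $x$ lies on the associated skeleton $\Gamma$. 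Either way $x\in\Gamma$ for some skeleton $\Gamma$, giving $I(\Xan)\subseteq\bigcup_\Gamma\Gamma$.

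\textbf{Main obstacle and execution.} The essential input is purely a citation matter: that the structure theory of \cite{BPR2} (semistable vertex sets exist, can be enlarged to contain any given type II point, and every type III point lies on some skeleton) yields both inclusions. The only real content to check carefully is the claim for type III points, since one must exhibit a single skeleton through a given $x$ rather than merely approximate it; this follows because $x$, being a type III point, determines a unique geodesic segment on which it lies interior to an annulus skeleton, and that annulus can be realized inside a skeleton of $\Xan$ by choosing the semistable vertex set to include two type II points flanking $x$ on that segment. I would phrase the proof as: ``The inclusion $\supseteq$ holds since every point of a skeleton is of type II or III by \cite[\S3]{BPR2}. For $\subseteq$, given $x\in I(\Xan)$, choose a semistable vertex set $V$ with $x$ lying on the associated skeleton $\Gamma$; this is possible by \cite[Theorem 3.2 and Corollary 3.14]{BPR2} (enlarging $V$ by type II points if $x$ is of type II, and choosing $V$ so that $x$ lies interior to an edge if $x$ is of type III).'' This keeps the argument short and leans entirely on the already-cited literature.
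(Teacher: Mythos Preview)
Your proposal is correct and follows essentially the same approach as the paper: both inclusions are deduced from the structure theory of skeleta in \cite{BPR2}. The paper is simply more terse, citing \cite[Corollary 5.1]{BPR2} directly (which states precisely this identity), whereas you unpack the two inclusions and point to the underlying ingredients; either presentation is fine.
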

\begin{proof}
See \cite[Corollary 5.1]{BPR2}.
\end{proof}

\begin{prop}\label{skeletonprop}
	Let $\Gamma$ be a skeleton of $\Xan$, then the following are true:
	\begin{enumerate}
		\item $\Gamma$ is a connected, compact subset of  points of type II and III and has the structure of a metric graph.
\item For a finite subset $S\subset I(\Xan)$, there is a skeleton $\Gamma'$ of $\Xan$ such that $\Gamma'$ contains $\Gamma$ as a finite metric subgraph and $S\subset \Gamma'$.
		\item For a finite subset $S$ of type II points in $\Gamma$, there is a skeleton of $\Xan$ such that $\Gamma'$ contains $\Gamma$ as a finite metric subgraph with $\Gamma_0\cup S = \Gamma'_0$, i.e.~$\Gamma'$ and $\Gamma$ are equal as sets.
	\end{enumerate}
\end{prop}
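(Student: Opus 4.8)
The plan is to derive all three statements from standard properties of semistable vertex sets and their skeleta as developed in \cite{BPR2}. For a semistable vertex set $\mathcal V$ of $X$ I write $\Sigma(\mathcal V)$ for the associated skeleton.

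For (i), recall that by the very construction of $\Sigma(\mathcal V)$ in \cite{BPR2} the space $\Xan\setminus \mathcal V$ is a disjoint union of finitely many open annuli and some open balls; the skeleton $\Sigma(\mathcal V)$ is the union of $\mathcal V$ with the skeleta of these annuli, metrized by their moduli. This exhibits $\Gamma=\Sigma(\mathcal V)$ as a finite metric graph whose vertices lie in $\mathcal V$ and are of type II, while the interior points of the edges are of type II or III; in particular $\Gamma\subset I(\Xan)$. Connectedness is immediate because $\Xan$ is connected and $\Gamma$ is a (deformation) retract of $\Xan$, and compactness holds either because a finite metric graph is compact or because a retract of the compact Hausdorff space $\Xan$ is closed, hence compact. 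All of this is recorded in \cite{BPR2}.

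The key tool for (ii) and (iii) is the following consequence of \cite{BPR2}: if $\mathcal V_1$ and $\mathcal V_2$ are semistable vertex sets, then $\mathcal V_1\cup\mathcal V_2$ is again a semistable vertex set, since it is obtained from $\mathcal V_1$ by adjoining finitely many type II points; moreover enlarging the vertex set enlarges the skeleton, $\Sigma(\mathcal V_1)\subseteq \Sigma(\mathcal V_1\cup\mathcal V_2)$, and this inclusion identifies $\Sigma(\mathcal V_1)$ isometrically with a subgraph of $\Sigma(\mathcal V_1\cup\mathcal V_2)$ after a suitable subdivision. By induction the analogous statement holds for any finite family of semistable vertex sets. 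For (ii) write $\Gamma=\Sigma(\mathcal V)$. By the preceding proposition every point $s\in S\subset I(\Xan)$ lies on some skeleton $\Sigma(\mathcal V_s)$. Setting $\mathcal V':=\mathcal V\cup\bigcup_{s\in S}\mathcal V_s$, the set $\mathcal V'$ is a semistable vertex set and $\Gamma':=\Sigma(\mathcal V')$ contains $\Gamma$ as a finite metric subgraph and contains every $s$, so $S\subset\Gamma'$; if necessary one adjoins a few more type II points to $\mathcal V'$ to remove loop edges, as agreed in the text. For (iii) the points of $S$ already lie on $\Gamma=\Sigma(\mathcal V)$ and are of type II, so $\mathcal V':=\mathcal V\cup S$ is a semistable vertex set; adjoining to $\mathcal V$ a type II point already lying on $\Sigma(\mathcal V)$ only subdivides the edge carrying that point and produces no new segments, hence $\Sigma(\mathcal V')=\Sigma(\mathcal V)$ as subsets of $\Xan$ and with the same metric, while the vertex set becomes $\Gamma_0\cup S$. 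Thus $\Gamma'=\Sigma(\mathcal V')$ has the required properties.

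The only genuine inputs are therefore the two facts cited from \cite{BPR2} — that a type II point may always be adjoined to a semistable vertex set, and that enlarging the vertex set enlarges the skeleton — together with the compatibility of the skeleton metrics under such enlargement, which is what makes ``$\Gamma$ is a finite metric subgraph of $\Gamma'$'' literally true and not merely true up to homeomorphism. This last bookkeeping point, rather than any single step, is where one has to be a little careful; everything else is formal.
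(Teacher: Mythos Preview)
Your argument is correct and follows the same route as the paper, which simply cites \cite[Lemma 3.4]{BPR2} and \cite[Lemma 3.13]{BPR2} together with the preceding proposition that $I(\Xan)$ is the union of all skeleta. You have essentially unpacked what those citations contain, so there is nothing to add.
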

\begin{proof}
See \cite[Lemma 3.4]{BPR2}, \cite[Lemma 3.13]{BPR2} and use the last proposition.
\end{proof}

\begin{defn}\label{paf}\label{skeletalmetric}
	With the help of the shortest-path metric on every skeleton and the fact that $I(\Xan)$ can be exhausted by skeleta,
	one can define a metric $\rho$ on~$\HM(\Xan)$ (cf.~\cite[\S 5]{BPR2}), which is called the \emph{skeletal metric}. 
\end{defn}

\begin{defn}
	Let $\Gamma$ be a skeleton of $\Xan$. Then a subset $\Omega$ of $\Gamma$ is a \emph{star-shaped open subset}  of $\Gamma$  if $\Omega$ is a simply-connected open subset of $\Gamma$ and there is a point $x_0\in \Omega$ such that  $\Omega\backslash\{x_0\}$ is a disjoint union of open intervals.
	We call $x_0$ the \emph{center} of $\Omega$.
\end{defn}

\begin{Thm}\label{Str}
	Let $x_0\in \Xan$. There is a fundamental system of open neighborhoods $\{V_\alpha\}$ of $x_0$ of the following form:
	\begin{enumerate}
		\item If $x_0$ is of type I or type IV, then the $V_\alpha$ are open balls.
		\item If $x_0$ is of type III, then the $V_\alpha$ are open annuli with $x_0$ contained in the skeleton of the annulus $V_\alpha$ (cf.~\cite[\S 2]{BPR2}).
		\item If $x_0$ is of type II, then $V_\alpha=\tau_\Gamma^{-1}(\Omega_\alpha)$ for a skeleton $\Gamma$ of $\Xan$ and a star-shaped open subset $\Omega_\alpha$  of $\Gamma$. 
		Hence each $V_\alpha\backslash\{x_0\}$ is a disjoint union of open balls and open annuli.
	\end{enumerate}
\end{Thm}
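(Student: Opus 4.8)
The plan is to argue separately according to the type of $x_0$, in each case first writing down the candidate neighborhoods and then checking that they are cofinal among all open neighborhoods of $x_0$. The external inputs I will use are: the structure of $\Xan\setminus\Gamma$ for a skeleton $\Gamma$ as a disjoint union of open balls (\cite{BPR2}), the equality $I(\Xan)=\bigcup_\Gamma\Gamma$, the classical description of the points of a Berkovich open disk (\cite{BR}), and the following elementary fact about the compact Hausdorff space $\Xan$: if $\{W_\beta\}$ is a family of open sets, downward directed under inclusion, with $\bigcap_\beta\overline{W_\beta}=\{x_0\}$, then for every open $U\ni x_0$ some $W_\beta$ satisfies $W_\beta\subseteq U$ (by compactness finitely many $\overline{W_{\beta_i}}$ already intersect inside $U$, after which one passes to a common lower bound).

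\emph{Types I and IV.} Since $x_0\notin I(\Xan)$ by Proposition~\ref{skeletonprop}(i), for any skeleton $\Gamma$ the point $x_0$ lies in one of the open balls $B$ making up $\Xan\setminus\Gamma$, and $B$ is analytically isomorphic to an open disk. I would then take for $\{V_\alpha\}$ the usual fundamental system inside $B$: if $x_0$ is of type I it is a rigid point $a\in B$ and one uses the subdisks around $a$ of shrinking radius; if $x_0$ is of type IV it is represented by a nested sequence of closed disks with empty intersection and one uses the associated nested open subdisks. These are open balls, they are cofinal among neighborhoods of $x_0$ \emph{in} $B$ by the classical theory of the disk, and since $B$ is open in $\Xan$ they are cofinal in $\Xan$.

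\emph{Type III.} By Proposition~\ref{skeletonprop}(ii) I would fix a skeleton $\Gamma$ containing $x_0$; being of type III, $x_0$ is an interior point of an edge $e$ of $\Gamma$. For short open subintervals $e_\alpha\ni x_0$ of $e$ I set $V_\alpha:=\tau_\Gamma^{-1}(e_\alpha)$, which is open and is an open annulus whose skeleton contains $x_0$. Because a type III point has only the two tangent directions lying along $e$, the fibre $\tau_\Gamma^{-1}(x_0)$ equals $\{x_0\}$, so the closures $\overline{V_\alpha}$ intersect in $\tau_\Gamma^{-1}(x_0)=\{x_0\}$ and the topological fact above yields cofinality.

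\emph{Type II.} By Proposition~\ref{skeletonprop}(ii)--(iii) I would fix a skeleton $\Gamma$ with $x_0\in\Gamma_0$. For points $v_1,\dots,v_n$ chosen on the finitely many edges of $\Gamma$ at $x_0$, the set $\Omega=\{x_0\}\cup\bigsqcup_i(x_0,v_i)$ is star-shaped open with center $x_0$, and $V=\tau_\Gamma^{-1}(\Omega)$ is an open neighborhood of $x_0$ with $V\setminus\{x_0\}=\bigsqcup_i\tau_\Gamma^{-1}\!\big((x_0,v_i)\big)\sqcup\bigsqcup_j B_j$, where each $\tau_\Gamma^{-1}((x_0,v_i))$ is the preimage of a segment in the interior of an edge, hence an open annulus, and the $B_j$ are the open balls of $\Xan\setminus\Gamma$ retracting to $x_0$; this already gives the asserted shape. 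The main obstacle, and the part I expect to need the most care, is cofinality: for a \emph{fixed} $\Gamma$ one has $\bigcap_\Omega\tau_\Gamma^{-1}(\Omega)=\tau_\Gamma^{-1}(x_0)\supsetneq\{x_0\}$, since a type II point has infinitely many tangent directions and hence infinitely many balls hanging off it, so the skeleton has to be allowed to grow with $\alpha$. Given an open $U\ni x_0$, I would first note that all but finitely many of the $B_j$ lie in $U$ — otherwise points $z_j\in B_j\setminus U$ from infinitely many distinct $B_j$ would have a cluster point $z\in\Xan\setminus U$ with $\tau_\Gamma(z)=x_0$ by continuity, forcing $z=x_0$ because the $B_j$ are open and disjoint, a contradiction. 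Then, using Proposition~\ref{skeletonprop}(ii)--(iii), I would enlarge $\Gamma$ to a skeleton $\Gamma'$ with $x_0\in\Gamma'_0$ whose edges at $x_0$ reach into each of the finitely many exceptional balls, so that every ball hanging off $x_0$ in $\Gamma'$ is now contained in $U$; finally, applying the topological fact along each of the finitely many edges of $\Gamma'$ at $x_0$ (whose associated annuli $\tau_{\Gamma'}^{-1}((x_0,v))$ have closures shrinking to $\{x_0\}$ as $v\to x_0$), I would choose the $v'_k$ close enough to $x_0$ that each such annulus lies in $U$. The resulting $V'=\tau_{\Gamma'}^{-1}\big(\{x_0\}\cup\bigsqcup_k(x_0,v'_k)\big)$ is then contained in $U$, which finishes the argument. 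A cleaner alternative for this last, somewhat delicate, step is to check directly that $\bigcap_{\Gamma'}\tau_{\Gamma'}^{-1}(x_0)=\{x_0\}$ — separating a point $y\neq x_0$ from $x_0$ by a skeleton through $x_0$, using Proposition~\ref{skeletonprop} when $y\in I(\Xan)$ and, when $y$ is of type I or IV, by growing a skeleton into the ball containing $y$ up to a type II point of the unique arc $[x_0,y]$ — and then to invoke the topological fact for the directed family of all $\tau_{\Gamma'}^{-1}(\Omega')$.
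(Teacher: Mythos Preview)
Your proposal is correct and considerably more detailed than what the paper actually does: the paper's proof consists solely of the citation ``See \cite[Corollary 4.27]{BPR2}'' and gives no argument of its own. Your case-by-case analysis recovers that result directly from the structure theory of $\Xan$ relative to a skeleton, and the steps are sound. In particular, the compactness lemma you isolate at the outset is correct (the closed sets $\overline{W_\beta}\setminus U$ have empty intersection, so finitely many already do, and directedness lets you pass below all of them); the Type~III case rests on the standard fact that a type~III point has exactly two tangent directions, both along its edge, so that $\tau_\Gamma^{-1}(x_0)=\{x_0\}$; and in the Type~II case you have correctly identified the genuine obstacle, namely that a fixed skeleton cannot suffice. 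Your finiteness argument for the exceptional balls (a cluster point of points from infinitely many distinct $B_j$ must retract to $x_0$, hence equal $x_0$ since the $B_j$ are open and pairwise disjoint) is the right mechanism, and after enlarging $\Gamma$ into those finitely many balls via Proposition~\ref{skeletonprop}(ii) every ball hanging off $x_0$ in the new skeleton lies in $U$, so one finishes by shrinking along the finitely many edges. What your route buys is self-containment within the paper's own framework; what the citation buys is brevity and a pointer to \cite{BPR2}, where the structure of simple domains and the identification of $\tau_\Gamma^{-1}$ of an open edge segment with an open annulus are developed systematically.
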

\begin{proof}
	See \cite[Corollary 4.27]{BPR2}.
\end{proof}
\begin{defn}\label{simple neighborhood}
	An open subset of the described form in Theorem \ref{Str} is  called \emph{simple open}.
\end{defn}
\begin{bem}
	Theorem \ref{Str} implies  directly that $\Xan$ is locally path-connected.
\end{bem}

\section{Subharmonic functions on non-archimedean curves}\label{Thuillier}
Thuillier developed in his thesis \cite{Th} a potential theory on non-archimedean curves, which is based on skeleta.
In this section, we introduce his subharmonic functions on $\Xan$ via his class of smooth functions with their corresponding Laplacian.

\begin{defn}Let  $\Gamma$ be a skeleton of  $\Xan$.
	\begin{enumerate}
		\item A \emph{piecewise affine function} on~$\Gamma$ is a continuous function $F\colon \Gamma\to \R$ such that  $F|_e\circ \alpha_e$ is piecewise affine for every edge $e$ of $\Gamma$, where $\alpha_e$ is an identification of $e$ with a real closed interval.
		\item We define the \emph{outgoing slope} of a piecewise affine function $F$ on~$\Gamma$ at a point $x\in \Gamma$ along a tangent direction $v_e$ at $x$ corresponding to an adjacent edge $e$ as $$d_{v_e}F(x):=\lim_{\varepsilon\to 0}(F|_e\circ \alpha_e)^\prime(\alpha_e^{-1}(x)+\varepsilon).$$

		One obtains a finite measure  on~$\Xan$ by putting $$dd^c F:=\sum_{x\in \Gamma}^{}\left(\sum_{v_e}d_{v_e} F(x)\right)\delta_x,$$
		where $e$ is running over all edges in $\Gamma$ at $x$.
		Since $F$ is piecewise affine, we have  $\sum_{v_e}d_{v_e} F(x)\neq 0$  for only finitely many points in $\Gamma$. 
	\end{enumerate} 		
\end{defn}

\begin{defn}\label{Dlisse} Let $W\subset \Xan$ be open.
A continuous function $f\colon W\to \R$ is called \emph{smooth} if for every point $x\in W$ there is a neighborhood $V$ of $x$ in $W$, a skeleton $\Gamma$ of $\Xan$ and a piecewise affine function $F$ on $\Gamma$ such that 
$$f=F\circ \tau_\Gamma$$ on $V$.
 We denote by $A^0(W)$ the vector space of smooth functions on~$W$, and by $A_c^0(W)$ the subspace of smooth functions on~$W$ with compact support in $W$. 
\end{defn}
\begin{bem}
One should note that these smooth functions are not necessarily smooth in the sense of Chambert-Loir and Ducros from \cite{CLD}. 
In \cite{Wa18} and \cite{Wa} we work with both notions and so smooth functions in the sense of Thuillier from Definition~\ref{Dlisse} are called lisse to distinguish them form those defined by Chambert-Loir and Ducros. 
\end{bem}
\begin{defn}\label{Bem dd^c} 
We write $A^1(W)$ for the set of real measures on~$W$ with discrete support in $I(W)$, and use $A_c^1(W)$ for those with compact support in $W$.   
Then for every smooth function $f\in A^0(W)$, there is a unique real measure $dd^cf$ in $A^1(W)$ such that $$dd^c f= dd^c F$$  whenever  $f=F\circ \tau_{\Gamma}$ for a skeleton $\Gamma$ of $\Xan$ (cf.~\cite[Th\'eor\`eme 3.2.10]{Th}). We call this linear operator $dd^c\colon A^0(W)\to A^1(W)$ the \emph{Laplacian}. Note that $A_c^0(W)$ is mapped to $A^1_c(W)$ under $dd^c$ \cite[Corollaire~3.2.11]{Th}. 
\end{defn}

\begin{prop}\label{ThF}
	For any two points $x,y\in I(\Xan)$ there is a unique smooth function $g_{x,y}\in A^0(\Xan)$ such that 
	\begin{enumerate}
		\item $dd^c g_{x,y} =\delta_x -\delta_y$, and \item $g_{x,y}(x)=0$.
	\end{enumerate}
\end{prop}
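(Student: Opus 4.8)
The plan is to construct $g_{x,y}$ directly on a suitable skeleton and then pull it back. First I would invoke Proposition~\ref{skeletonprop}(ii) to fix a skeleton $\Gamma$ of $\Xan$ containing both $x$ and $y$; since $x,y\in I(\Xan)$ this is possible. On the metric graph $\Gamma$ I want a piecewise affine function $F\colon\Gamma\to\R$ with $dd^cF=\delta_x-\delta_y$ in the sense of the Laplacian on $\Gamma$, i.e.\ the sum of outgoing slopes at $x$ equals $+1$, at $y$ equals $-1$, and is $0$ at every other point. This is the classical statement that on a finite connected metric graph the graph Laplacian has the constant functions as its kernel and hence the divisor $\delta_x-\delta_y$ (which has degree $0$) lies in its image; concretely one can produce $F$ as (a multiple of) a Green's function / potential kernel on $\Gamma$, or by explicitly solving the linear system on the finitely many vertices after adding $x,y$ to the vertex set via Proposition~\ref{skeletonprop}(iii). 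Normalizing by subtracting the constant $F(x)$ gives $F(x)=0$. Then set $g_{x,y}:=F\circ\tau_\Gamma$, which is smooth by Definition~\ref{Dlisse}, satisfies $dd^cg_{x,y}=dd^cF=\delta_x-\delta_y$ by Definition~\ref{Bem dd^c}, and has $g_{x,y}(x)=F(\tau_\Gamma(x))=F(x)=0$ since $x\in\Gamma$.

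For uniqueness, suppose $g_1,g_2$ both satisfy (i) and (ii). Then $h:=g_1-g_2\in A^0(\Xan)$ has $dd^ch=0$ and $h(x)=0$. The key point is that a smooth function on the compact connected space $\Xan$ with vanishing Laplacian is constant: on any skeleton $\Gamma'$ on which $h$ is of the form $H\circ\tau_{\Gamma'}$, the piecewise affine $H$ has all outgoing-slope sums zero, so at any vertex where $H$ attains its maximum over $\Gamma'$ all incident slopes are $\le 0$ and sum to $0$, hence are all $0$; propagating along edges of $\Gamma'$ (which is connected) shows $H$ is constant, so $h$ is constant on $\tau_{\Gamma'}^{-1}(\Gamma')$, and letting $\Gamma'$ exhaust the dense set $I(\Xan)$ together with continuity of $h$ forces $h$ constant on all of $\Xan$. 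Since $h(x)=0$, we get $h\equiv 0$, i.e.\ $g_1=g_2$.

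I expect the main obstacle to be the existence half, specifically making the transition from ``$F$ on $\Gamma$ with the right vertex Laplacian'' to a well-posed solvable linear problem: one must be careful that adding $x$ and $y$ as vertices (Proposition~\ref{skeletonprop}(iii) handles type II points; for type III points one uses Proposition~\ref{skeletonprop}(ii) to enlarge $\Gamma$ so that $x,y$ become vertices of a larger skeleton) produces a finite connected metric graph on which the discrete Green's function machinery applies, and that the resulting $F$ is genuinely piecewise affine on the chosen edge identifications $\alpha_e$. The degree-zero condition $\deg(\delta_x-\delta_y)=0$ is exactly what guarantees solvability, and connectedness of $\Gamma$ (Proposition~\ref{skeletonprop}(i)) gives that the solution is unique up to the additive constant we then fix by (ii); alternatively this existence statement can simply be cited from Thuillier \cite{Th}, as the functions $g_{x,y}$ are the elementary solutions of his Laplacian.
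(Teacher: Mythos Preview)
The paper's own proof is simply a citation to Thuillier \cite[Proposition~3.3.7]{Th}, so there is no argument to compare against; your proposal supplies the actual construction, and it is the natural one (and essentially what Thuillier does): pick a skeleton $\Gamma$ through $x$ and $y$, solve the metric-graph Laplace equation there (this is precisely the potential kernel $g_x(\cdot,y)_\Gamma$ of Definition~\ref{pkm}), and pull back via $\tau_\Gamma$.

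Two small remarks on the write-up. First, in the uniqueness step you tacitly assume $h$ factors \emph{globally} as $H\circ\tau_{\Gamma'}$ for one skeleton $\Gamma'$; the definition of smoothness is only local, so you should insert the one-line compactness argument (cover $\Xan$ by finitely many charts and take a common refining skeleton via Proposition~\ref{skeletonprop}(ii)). Once you have this, note that $\tau_{\Gamma'}^{-1}(\Gamma')=\Xan$, so the moment $H$ is constant you are already done on all of $\Xan$---the ``letting $\Gamma'$ exhaust $I(\Xan)$'' step is superfluous. Second, a cleaner route to uniqueness is available from results already stated in the paper: $h$ and $-h$ are smooth and subharmonic, so $h$ is harmonic, and since $\Xan$ is compact connected the Maximum Principle (Proposition~\ref{Max}) forces $h$ to be constant; together with $h(x)=0$ this gives $h\equiv 0$ without ever descending to a graph.
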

\begin{proof}
	See \cite[Proposition 3.3.7]{Th}.
\end{proof}

\begin{defn}
	Let $W$ be an open subset of $\Xan$. 
	We denote by $D^0(W)$ (resp.~$D^1(W)$) the dual of $A_c^1(W)$ (resp.~$A^0_c(W)$). 
\end{defn}

\begin{prop}\label{Hom}
	The map 
	$$D^0(W)\to \Hom(I(W),\R),~ T\mapsto (x \mapsto \langle  T, \delta_x \rangle)$$ is an isomorphism of vector spaces.
\end{prop}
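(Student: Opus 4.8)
The plan is to identify $A_c^1(W)$ explicitly as the free $\R$-vector space on the set $I(W)$, with the Dirac measures as a basis, and then to apply the universal property of direct sums.

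First I would show that every $\omega\in A_c^1(W)$ is a \emph{finite} $\R$-linear combination of Dirac measures $\delta_x$ with $x\in I(W)$. By definition $\supp(\omega)$ is a discrete subset of $W$ contained in $I(\Xan)$, and it is compact; a compact topological space in which every point is isolated is finite (the singletons form an open cover admitting no proper subcover), so $\supp(\omega)=\{x_1,\dots,x_n\}\subset I(W)$ and hence $\omega=\sum_{i=1}^n \omega(\{x_i\})\,\delta_{x_i}$. Conversely, any finite combination $\sum_{i} c_i\delta_{x_i}$ with $x_i\in I(W)$ and $c_i\in\R$ clearly defines an element of $A_c^1(W)$, and the family $\{\delta_x\}_{x\in I(W)}$ is $\R$-linearly independent because distinct Diracs have disjoint supports. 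Therefore $A_c^1(W)=\bigoplus_{x\in I(W)}\R\,\delta_x$.

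Second, by definition $D^0(W)=\Hom_{\R}(A_c^1(W),\R)=\Hom_{\R}\bigl(\bigoplus_{x\in I(W)}\R\,\delta_x,\R\bigr)$. By the universal property of the direct sum, restriction to the basis gives a bijection onto $\prod_{x\in I(W)}\R=\Hom(I(W),\R)$, namely $T\mapsto\bigl(x\mapsto\langle T,\delta_x\rangle\bigr)$: a linear functional on $\bigoplus_{x}\R\,\delta_x$ is uniquely determined by its values on the $\delta_x$, and these values may be prescribed arbitrarily. This map is visibly $\R$-linear, and it is exactly the map in the statement; hence it is an isomorphism of vector spaces.

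The only point requiring genuine care — and thus the main (if mild) obstacle — is the finiteness argument in the first step: one must use that $\Xan$, and hence $W$, is locally compact Hausdorff (Definition \ref{Xan}) so that the compact-support hypothesis in the definition of $A_c^1(W)$ genuinely forces the support to be finite, ruling out infinite discrete supports or convergent infinite sums of Diracs. Everything else is formal linear algebra.
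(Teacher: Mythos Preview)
Your argument is correct: once one observes that $A_c^1(W)$ is the free $\R$-vector space on $\{\delta_x : x\in I(W)\}$, the statement is immediate from the universal property of the direct sum. The paper does not give its own proof but simply cites \cite[Proposition~3.3.3]{Th}; your argument is the natural one and is presumably what Thuillier does.

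One small remark on your ``obstacle'': the finiteness of the support follows from the purely topological fact that a compact space carrying the discrete topology is finite. You do not actually need local compactness of the ambient space $W$ for this step; the Hausdorff property of $\Xan$ (hence of $W$) is only relevant insofar as it guarantees that singletons $\{x\}$ with $x\in I(W)$ are closed, so that $\delta_x$ genuinely has compact support $\{x\}$ and lies in $A_c^1(W)$.
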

\begin{proof}See \cite[Proposition 3.3.3]{Th}.
\end{proof}
In the following, we always use this identification.
\begin{bem}
	The Laplacian $dd^c\colon A^0_c(W)\to A^1_c(W)$ on an open subset $W\subset \Xan$ leads  naturally by duality to an $\R$-linear operator $$dd^c\colon D^0(W)\to D^1(W),~T\mapsto (g\mapsto \langle dd^c T,g\rangle:=\langle T,dd^c g\rangle)$$ such that the following diagram commutes
\begin{align*}
\begin{xy}
  \xymatrix{
      A^0(W) \ar[r]^{dd^c} \ar[d]   &   A^1(W) \ar[d] \\
      D^0(W) \ar[r]^{dd^c}            &   D^1(W).
  }
\end{xy}
\end{align*}
\end{bem}

\begin{defn}
We say that  a current $T\in D^1(W)$ on an open subset $W$ of $\Xan$ is \emph{positive} if  $ \langle T, g\rangle\geq 0$ for every non-negative smooth function $g\in A^0_c(W)$.
\end{defn}
Before introducing subharmonic functions we recall upper respectively lower semi-continuity.
\begin{bem}\label{usc}
	A function $f\colon W \to [-\infty,\infty)$ on an open subset $W$ of a topological space is \emph{upper semi-continuous}  (shortly \emph{usc}) in a point $x_0$ of $W$ if
	$$\limsup_{x\to x_0} f(x)\leq f(x_0),$$ where the limit superior in this context is defined as
	\begin{align*}\limsup _{x\to x_0}f(x):=\sup_{U\in \US(x_0)}\inf_{x\in U\backslash\{x_0\}}f(x) ,\end{align*}
	where $\US(x_0)$ is any basis of open neighborhoods of $x_0$.
	We say that $f$ is \emph{upper semi-continuous} (shortly \emph{usc}) \emph{on~$W$} if it is usc in all points of $W$.

	Analogously, a function $f\colon W \to (-\infty,\infty]$ on an open subset $W$ of a topological space is \emph{lower semi-continuous} (shortly \emph{lsc}) in a point $x_0$ of $W$ if
	$$\liminf_{x\to x_0} f(x)\geq f(x_0),$$ where the limit inferior in this context is defined as
	\begin{align*}\liminf _{x\to x_0}f(x):=\inf_{U\in \US(x_0)}\sup_{x\in U\backslash\{x_0\}}f(x) ,\end{align*}
	where $\US(x_0)$ is any basis of open neighborhoods of $x_0$.
	We say that $f$ is \emph{lower semi-continuous} (shortly \emph{lsc}) \emph{on~$W$} if it is lsc in all points of $W$.
\end{bem}
\begin{defn} \label{Prop not lisse} Let $W$ be an open subset  of $\Xan$.
	An upper semi-continuous function $f\colon W\to [-\infty,\infty)$ is called \emph{subharmonic} if and only if $f\in D^0(W)$ and $dd^cf\geq 0$.

A continuous function $h\colon W\to \R$ is called \emph{harmonic} if $h$ and $-h$ are subharmonic, i.e.~$dd^c h=0$.
\end{defn}

\begin{bem}\label{BemSH}
	Note that this is not Thuillier's original definition of subharmonic functions, but it is equivalent  by \cite[Th\'eor\`eme 3.4.12]{Th}. 
Baker and Rumely independently introduced subharmonic functions on $\mathbb{P}^{1,\an}$. 
However, their class equals Thuillier's class of subharmonic functions in this special case.

If $f$ is smooth, then $f$ is subharmonic if and only if $dd^cf$ is a positive measure \cite[Proposition 3.4.4]{Th}. 
Moreover, note that a harmonic function $h$ is automatically smooth i.e.~$h\in A^0(W)$ by \cite[Corollaire 3.2.11]{Th}.
\end{bem}

\begin{prop}\label{Max}
	Let $W$ be an open subset of $\Xan$. Then a subharmonic function $f\colon W\to [-\infty,\infty)$ admits a local maximum in a point $x_0$ of $W$ if and only if it is locally constant at $x_0$. 
\end{prop}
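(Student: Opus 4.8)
The plan is to prove the maximum principle by a local reduction to a combinatorial statement on a metric graph, exploiting the retraction structure of $\Xan$ described in Theorem \ref{Str}. One direction is trivial: if $f$ is locally constant at $x_0$, then $x_0$ is obviously a local maximum, so the content is the converse. So suppose $f$ is subharmonic on $W$ and attains a local maximum at $x_0$; I want to show $f$ is locally constant near $x_0$.

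First I would reduce to a convenient neighborhood. By Theorem \ref{Str} choose a simple open neighborhood $V$ of $x_0$ contained in $W$ on which the local maximum is global. The key observation is that $f$ is \emph{finite} at $x_0$: indeed a subharmonic function lies in $D^0(W)$, and by the identification in Proposition \ref{Hom} its value at points of $I(\Xan)$ is finite; for type I and type IV points one argues separately, but in fact we may assume $x_0$ is of type II after shrinking, because $f$ being usc and $\le f(x_0)$ near $x_0$ combined with density of $I(\Xan)$ forces the relevant behaviour to be detected on a skeleton. More precisely, the cleanest route: pick a skeleton $\Gamma$ with $\tau_\Gamma^{-1}(\Omega) = V$ for a star-shaped $\Omega$ with center a type II point, and analyze the restriction $f|_\Gamma$. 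Since $f$ is usc and $\Gamma$ is compact, and since $f$ restricted to each edge of $\Gamma$ can be compared with the fundamental solutions $g_{x,y}$ from Proposition \ref{ThF}, the subharmonicity inequality $dd^c f \ge 0$ tested against suitable bump functions $g \in A_c^0$ supported near an interior point of an edge yields the classical sub-mean-value / convexity property along edges: $f\circ\tau_\Gamma$ is ``convex'' along each segment in the sense that its one-sided slopes satisfy $\sum_{v_e} d_{v_e}(f|_\Gamma)(x) \ge 0$ at each vertex, and $f|_\Gamma$ has no strict local interior maximum on an open edge.

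The heart of the argument is then: a function on a finite metric graph that is (sub)harmonic in this slope sense and attains its maximum at an interior point must be constant on the connected component of that point, by the standard propagation argument — if $f|_\Gamma(x_0)$ is maximal and some outgoing slope at $x_0$ were negative, the total slope sum would be forced negative (using that no slope can be positive at a maximum), contradicting $dd^c f \ge 0$ at $x_0$; hence all slopes at $x_0$ vanish, $f$ is constant on a neighborhood of $x_0$ in $\Gamma$, and one iterates/uses connectedness of $\Gamma\cap V$. Finally, to upgrade from ``constant on the skeleton'' to ``locally constant on $V$'' one uses that off the skeleton $V\setminus\Gamma$ is a disjoint union of open balls and open annuli (Theorem \ref{Str}), on each of which $f$ is subharmonic; a subharmonic function on an open ball or annulus whose boundary value (the value at the unique boundary point lying on $\Gamma$, or the two boundary points of an annulus) equals the global maximum must itself equal that maximum throughout, since otherwise it would have a strict interior maximum — apply the edge/ball case of the maximum principle recursively, or invoke that a subharmonic function is dominated by the harmonic function with the same boundary data, which here is constant.

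The main obstacle I expect is the careful treatment of points of type I and IV and, relatedly, making the ``slope condition on the skeleton'' rigorous from the distributional inequality $dd^c f\ge 0 $ — i.e.\ showing that testing against the smooth compactly supported bump functions $g$ (which are themselves of the form $G\circ\tau_\Gamma$) genuinely recovers the combinatorial Laplacian of $f|_\Gamma$ without losing mass at the type I/IV ``ends''. One must ensure that a local maximum at a type I or IV point $x_0$ propagates into the tree: here the trick is that any neighborhood of such $x_0$ contains points of $I(\Xan)$ arbitrarily close, and usc of $f$ together with $f\le f(x_0)$ near $x_0$ plus the already-established constancy on skeleta through those nearby points pins $f$ down; alternatively one cites that a subharmonic function has no local maximum at a type I point unless locally constant because of the explicit structure of the Laplacian near such points in \cite{Th}. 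I would present the type II case in full and dispatch the other types by this approximation-and-propagation remark.
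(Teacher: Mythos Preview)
The paper does not prove this proposition at all; it simply cites \cite[Proposition~3.1.11]{Th}. Thuillier's argument there uses his \emph{original} definition of subharmonicity (domination by harmonic functions on affinoid neighbourhoods), from which the maximum principle is essentially a one-line consequence: if $f\le M$ near $x_0$ with $f(x_0)=M$, then on any small strictly affinoid $Y\ni x_0$ the harmonic majorant $h$ with $h|_{\partial Y}=f|_{\partial Y}$ satisfies $f\le h\le M$ and $h(x_0)=M$, so the (easy, smooth) maximum principle forces $h\equiv M$, hence $f|_{\partial Y}\equiv M$; varying $Y$ and using density of $I(\Xan)$ plus upper semi-continuity finishes. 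The equivalence of that definition with the $dd^cf\ge 0$ characterisation used in this paper is \cite[Th\'eor\`eme~3.4.12]{Th}, already invoked in Remark~\ref{BemSH}.

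Your route is genuinely different: you try to extract the maximum principle \emph{directly} from the current inequality $dd^cf\ge 0$, via tent test functions on a skeleton. The skeleton part of your argument is sound in spirit and can be made rigorous (testing against a tent $G\circ\tau_\Gamma$ at $x_0$ gives a mean-value inequality $\sum_i\bigl(f(y_i)-f(x_0)\bigr)\ge 0$, which together with $f\le f(x_0)$ forces equality along the edges). The real problem is your Step~5. You claim that on each open ball $B\subset V\setminus\Gamma$ with boundary point $\zeta\in\Gamma$, knowing $f(\zeta)=M$ forces $f\equiv M$ on $B$ ``since otherwise it would have a strict interior maximum''. This is false: $f$ could simply be $<M$ everywhere on $B$ with the supremum $M$ attained only at the boundary point $\zeta\notin B$; there is no interior maximum to contradict. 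Your fallback, ``$f$ is dominated by the harmonic function with the same boundary data, which here is constant'', only yields $f\le M$ on $B$, which you already knew --- it does not give $f\equiv M$. (If you \emph{do} want to use harmonic domination, use it as in Thuillier's argument sketched above, at $x_0$ rather than on the complementary balls; but then your entire skeleton discussion becomes superfluous.)

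The correct way to run your approach is to avoid Step~5 altogether: for each $z\in I(V)$ choose a skeleton $\Gamma'$ containing the path $[x_0,z]$ (possible since $V$ is uniquely path-connected and $[x_0,z]\subset I(\Xan)$), then propagate the tent-function equality along $[x_0,z]$ inside $\Gamma'$ to get $f(z)=M$. This gives $f\equiv M$ on $I(V)$; density of $I(V)$ and upper semi-continuity then give $f\equiv M$ on $V$. Your treatment of type~I and type~IV local maxima is also incomplete as written --- the ``approximation-and-propagation remark'' does not work because you have no nearby point where $f$ is \emph{known} to equal $M$ to propagate from --- but once you have the usc/density step just described you can handle these by first showing $f\equiv M$ on the interior of the path $[x_0,\zeta]$ via convexity (Remark~\ref{convex}) plus the tent argument at interior type~II points, and then running the same density argument.
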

\begin{proof}
	See \cite[Proposition 3.1.11]{Th}.
\end{proof}

\begin{prop}\label{sheaf}
	The subharmonic functions form a sheaf on~$\Xan$.
\end{prop}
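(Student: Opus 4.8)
The plan is to verify the sheaf axioms for the presheaf $W \mapsto \{\text{subharmonic functions on }W\}$ equipped with the obvious restriction maps. Since this is a sub-presheaf of the sheaf of all $[-\infty,\infty)$-valued functions, the presheaf and separation axioms are automatic, and the only content is the gluing axiom: given an open cover $W = \bigcup_i W_i$ and a function $f\colon W \to [-\infty,\infty)$ with $f|_{W_i}$ subharmonic for every $i$, one must show that $f$ is subharmonic on $W$, i.e.\ that $f$ is usc, that $f \in D^0(W)$, and that $dd^c f \ge 0$.

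The first two conditions are purely local. Upper semi-continuity is a pointwise property, so $f$ usc on each $W_i$ gives $f$ usc on $W$. Under the identification $D^0(W) \cong \Hom(I(W),\R)$ of Proposition~\ref{Hom}, membership in $D^0$ just amounts to being real-valued at every point of $I(\cdot)$; since $I(W) = \bigcup_i I(W_i)$, the hypothesis $f|_{W_i} \in D^0(W_i)$ forces $f$ to be finite on $I(W)$, hence $f \in D^0(W)$.

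The essential point is the positivity $dd^cf \ge 0$ on $W$, i.e.\ $\langle f, dd^c g\rangle = \int f\,d(dd^cg) \ge 0$ for every $g \in A_c^0(W)$ with $g \ge 0$. As $dd^c g$ is a measure supported on the compact set $\supp g$, which is covered by finitely many $W_1,\dots,W_n$, it suffices to decompose $g = g_1 + \dots + g_n$ with $g_j \in A_c^0(W_j)$ and $g_j \ge 0$ for all $j$; then $\langle f, dd^c g\rangle = \sum_j \langle f|_{W_j}, dd^c g_j\rangle \ge 0$ by subharmonicity of each $f|_{W_j}$. To build such a decomposition I would first note that $A^0$ is closed under $\min$ and $\max$ (pass to a common refinement of the two relevant skeleta using Proposition~\ref{skeletonprop}, pull both functions back along the retraction to a single metric graph, and use that $\min$ and $\max$ of piecewise affine functions are piecewise affine), and then induct on $n$ to reduce to $n = 2$. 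For $n = 2$, with the disjoint compact sets $A_1 := \supp g \setminus W_2 \subset W_1$ and $A_2 := \supp g \setminus W_1 \subset W_2$, choose a smooth function $\phi \ge 0$ with $\phi \equiv 1$ near $A_1$ and $\phi \equiv 0$ near $A_2$, and put $g_1 := \min(g, M\phi)$ with $M := \max g$ and $g_2 := g - g_1 = \max(0, g - M\phi)$. One checks that $g_1, g_2 \in A_c^0$ are nonnegative, that $g_1 = g$ and $g_2 = 0$ near $A_1$, and that $g_1 = 0$ near $A_2$, so that $\overline{\{g_1 \ne 0\}} \subset W_1$ and $\overline{\{g_2 \ne 0\}} \subset W_2$, as required.

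The main obstacle is precisely this decomposition: Thuillier's smooth functions are \emph{not} closed under multiplication, so the ordinary partition-of-unity argument is unavailable, and one must instead work on the level of skeleta, exploiting the lattice structure of $A^0$ together with the existence of smooth ``bump'' functions (constructed from piecewise affine functions on a sufficiently fine skeleton). An alternative route that bypasses this step is to invoke Thuillier's original, manifestly local description of subharmonicity (Remark~\ref{BemSH} and \cite[Th\'eor\`eme 3.4.12]{Th}): under that description, ``subharmonic on $W$'' is equivalent to ``subharmonic near every point of $W$'', which is exactly the gluing axiom.
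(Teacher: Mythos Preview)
The paper's own proof is a one-line citation to Thuillier's thesis \cite[Corollaire~3.1.13]{Th}, where the sheaf property is established for Thuillier's \emph{original} definition of subharmonicity; via the equivalence recorded in Remark~\ref{BemSH} this transfers to the definition used here. Your ``alternative route'' in the last paragraph is therefore exactly the paper's argument.

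Your main argument is genuinely different: you work directly with Definition~\ref{Prop not lisse} and produce a lattice-theoretic substitute for a smooth partition of unity, exploiting that $A^0$ is closed under $\min$ and $\max$. The decomposition $g=g_1+g_2$ with $g_1=\min(g,M\phi)$ and $g_2=\max(0,g-M\phi)$ is correct and the support computations go through as you indicate. The one step you only sketch---existence of a smooth Urysohn function $\phi$ separating the disjoint compacta $A_1,A_2$---is indeed available: by compactness and the fact that retractions to finer skeleta are compatible ($\tau_{\Gamma'}=\tau_{\Gamma'}\circ\tau_\Gamma$ for $\Gamma'\subset\Gamma$), one can refine to a skeleton $\Gamma$ with $\tau_\Gamma(A_1)\cap\tau_\Gamma(A_2)=\emptyset$ and then pull back a piecewise affine separating function on $\Gamma$. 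The advantage of your route is that it is self-contained within the paper's chosen definition and does not appeal to the nontrivial equivalence \cite[Th\'eor\`eme~3.4.12]{Th}; the cost is the extra combinatorial work in constructing the decomposition.
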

\begin{proof}
	See  \cite[Corollaire 3.1.13]{Th}.
\end{proof}

\begin{bem}\label{convex}\label{BemSlopesSum}
	Let $f\colon W\to [-\infty,\infty)$ be a subharmonic function on an open subset $W$ of $\Xan$ and
 let $[x_0,y_0]$ be an interval (i.e.~a segment of an edge) in  a skeleton $\Gamma$ of $\Xan$ such that $\tau_\Gamma^{-1}((x_0,y_0))\subset W$.
Then one can show that $f$ is convex restricted to  the relative interior of $I=[x_0,y_0]$ (see for example \cite[Remark 3.1.32]{Wa}).
\end{bem}

\section{Potential kernel}\label{Potentialkernel}
On the way to define Arakelov--Green's functions and prove an Energy Minimization Principle, we have to introduce a lot of other things first. 
Our most fundamental tool is the potential kernel that is a function $g_\zeta(\cdot,y)$ for fixed $\zeta$ and $y$ that inverts the Laplacian in the sense that   $dd^c g_{\zeta}(\cdot,y)=\delta_\zeta-\delta_y$.
A function with this property was already seen in Proposition~\ref{ThF} for $\zeta,y\in I(\Xan)$. 

\begin{defn}\label{pkm}
Let $\Gamma$ be a metric graph. 
For fixed points $\zeta,y\in \Gamma$, let $ g_{\zeta}(\cdot,y)_{\Gamma}\colon \Gamma \to \R_{\geq 0}$ be the unique piecewise affine function on~$\Gamma$ such that 
\begin{enumerate}
	\item $ dd^c g_{\zeta}(\cdot,y)_{\Gamma}=\delta_\zeta-\delta_y$, and
	\item $g_{\zeta}(\zeta,y)_{\Gamma}=0$.
\end{enumerate}
We call $ g_{\zeta}(x,y)_{\Gamma}$ the \emph{potential kernel} on~$\Gamma$.
\end{defn}

\begin{lem}\label{ggraph}
Let $\Gamma$ be a metric graph, then the potential kernel $ g_{\zeta}(x,y)_{\Gamma}$ on~$\Gamma$ is non-negative, bounded, symmetric in $x$ and $y$, and jointly continuous in $x,y,\zeta$. 
For every $\zeta'\in \Gamma$, we have 
$$g_{\zeta}(x,y)_{\Gamma}=g_{\zeta'}(x,y)_{\Gamma}-g_{\zeta'}(x,\zeta)_{\Gamma}-g_{\zeta'}(y,\zeta)_{\Gamma}+g_{\zeta'}(\zeta,\zeta)_{\Gamma}.$$
\end{lem}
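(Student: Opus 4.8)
The plan is to establish each property of the potential kernel $g_\zeta(x,y)_\Gamma$ separately, exploiting the defining properties $dd^c g_\zeta(\cdot,y)_\Gamma = \delta_\zeta - \delta_y$ and $g_\zeta(\zeta,y)_\Gamma = 0$ together with the structure of piecewise affine functions on a metric graph. First I would record the identity at the end, since it is the backbone for everything else: for fixed $\zeta'$, the function on the right-hand side, call it $h(x)$, is piecewise affine in $x$ and satisfies $dd^c_x h = (\delta_x\text{-coefficient computation}) = \delta_\zeta - \delta_y$ — indeed $dd^c_x g_{\zeta'}(x,y)_\Gamma = \delta_{\zeta'} - \delta_y$, $dd^c_x g_{\zeta'}(x,\zeta)_\Gamma = \delta_{\zeta'} - \delta_\zeta$, and the remaining two terms are constant in $x$, so $dd^c_x h = (\delta_{\zeta'}-\delta_y) - (\delta_{\zeta'}-\delta_\zeta) = \delta_\zeta - \delta_y$. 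Evaluating $h$ at $x=\zeta$ gives $g_{\zeta'}(\zeta,y)_\Gamma - g_{\zeta'}(\zeta,\zeta)_\Gamma - g_{\zeta'}(y,\zeta)_\Gamma + g_{\zeta'}(\zeta,\zeta)_\Gamma = g_{\zeta'}(\zeta,y)_\Gamma - g_{\zeta'}(y,\zeta)_\Gamma$, which vanishes \emph{once symmetry of $g_{\zeta'}$ is known}; hence by the uniqueness in Definition~\ref{pkm}, $h = g_\zeta(\cdot,y)_\Gamma$. So the change-of-base-point formula follows from symmetry and uniqueness, and I would prove symmetry first.

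For symmetry, fix $\zeta$ and consider $\phi(x,y) := g_\zeta(x,y)_\Gamma - g_\zeta(y,x)_\Gamma$. In the variable $x$ (with $y$ fixed), $dd^c_x g_\zeta(x,y)_\Gamma = \delta_\zeta - \delta_y$, while $dd^c_x g_\zeta(y,x)_\Gamma = 0$ because $g_\zeta(y,\cdot)_\Gamma$ as a function of its \emph{second slot}... — here one must be slightly careful, so the cleaner route is the classical Green's-function reciprocity argument: apply the combinatorial Green's identity on the graph $\Gamma$, namely $\int_\Gamma u \, dd^c v = \int_\Gamma v \, dd^c u$ for piecewise affine $u,v$ (which holds because both sides equal $-\sum_{\text{edges}} \int u' v'$ after integrating by parts edge-by-edge, the boundary terms cancelling since $\Gamma$ is a closed metric graph without boundary). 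Taking $u = g_\zeta(\cdot,y)_\Gamma$ and $v = g_\zeta(\cdot,x)_\Gamma$ yields $\int g_\zeta(\cdot,y)_\Gamma\,(\delta_\zeta - \delta_x) = \int g_\zeta(\cdot,x)_\Gamma\,(\delta_\zeta - \delta_y)$, i.e. $g_\zeta(\zeta,y)_\Gamma - g_\zeta(x,y)_\Gamma = g_\zeta(\zeta,x)_\Gamma - g_\zeta(y,x)_\Gamma$; since $g_\zeta(\zeta,\cdot)_\Gamma \equiv 0$ by normalization, this gives $g_\zeta(x,y)_\Gamma = g_\zeta(y,x)_\Gamma$. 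With symmetry in hand the base-point formula follows as above.

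Non-negativity: for fixed $y$ (and $\zeta$), $g_\zeta(\cdot,y)_\Gamma$ is piecewise affine with $dd^c = \delta_\zeta - \delta_y$, so it is harmonic (hence locally affine) on $\Gamma \setminus \{\zeta, y\}$, has a local min configuration forced at $y$ (the outgoing slopes there sum to $-1 < 0$, impossible at a local max, so... ) — more directly: the function is subharmonic on $\Gamma\setminus\{y\}$ (its Laplacian there is $+\delta_\zeta \ge 0$), so by the maximum principle it attains its maximum on $\Gamma\setminus\{y\}$ only where locally constant or at the "boundary" $y$; and it is superharmonic away from $\zeta$. A clean way: $g_\zeta(\cdot,y)_\Gamma$ attains its minimum somewhere; at an interior minimum point other than $y$ the sum of outgoing slopes is $\ge 0$, consistent only if that point is $\zeta$ or a flat point; one checks the minimum value is $0$, attained at $\zeta$. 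Hence $g_\zeta(x,y)_\Gamma \ge 0 = g_\zeta(\zeta,y)_\Gamma$. Boundedness is then immediate: $\Gamma$ is compact and, for fixed $\zeta,y$, the function is continuous on the compact $\Gamma$; for the uniform bound over all $\zeta,y$ one uses the base-point formula to reduce to a single fixed $\zeta'$ and then continuity of $(x,y,\zeta)\mapsto$ the finitely many terms on a compact set — which also gives joint continuity in $x,y,\zeta$. The one genuine subtlety, and the step I expect to be the main obstacle, is \textbf{joint continuity in all three variables $x,y,\zeta$}: one must argue that the piecewise affine function solving the discrete Laplace equation depends continuously on the location of its source and sink (and on the base point $\zeta$ that normalizes it). I would handle this by fixing an auxiliary base point $\zeta_0$, expressing $g_\zeta(x,y)_\Gamma$ via the base-point identity entirely in terms of $g_{\zeta_0}(\cdot,\cdot)_\Gamma$, and then proving joint continuity of $(x,y)\mapsto g_{\zeta_0}(x,y)_\Gamma$ by viewing it as the solution of a finite linear system (the weighted graph Laplacian restricted to $\Gamma_0$, refined so that $x,y$ are vertices) whose entries vary continuously with edge lengths — hence Cramér's rule gives continuity — together with interpolation along edges; continuity in $\zeta$ is then free from the base-point formula, since $\zeta$ enters only through evaluations of the already-continuous $g_{\zeta_0}$.
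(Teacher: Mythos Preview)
Your argument is correct. The paper gives no proof of its own here: it simply writes ``Follows by \cite[Proposition~3.3]{BR}'' and defers entirely to Baker--Rumely, where these properties of the metric-graph potential kernel are established. Your self-contained route --- symmetry via the self-adjointness identity $\int_\Gamma u\,dd^c v = \int_\Gamma v\,dd^c u$, the change-of-basepoint formula from symmetry and uniqueness, non-negativity via the minimum principle (the function is superharmonic on $\Gamma\setminus\{\zeta\}$, so a minimum there forces constancy on a component whose closure contains $\zeta$, hence $\min = g_\zeta(\zeta,y)_\Gamma = 0$), and joint continuity by reducing through the basepoint formula to continuity of $(x,y)\mapsto g_{\zeta_0}(x,y)_\Gamma$ for a single fixed $\zeta_0$, handled by linear algebra on a refined vertex set --- is essentially the standard argument and parallels what is done in \cite[\S 3]{BR}. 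The only place your sketch could be tightened is the non-negativity step: the ``flat point'' case needs the connectedness/propagation argument spelled out to conclude the minimum value is actually $0$, but this is routine.
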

\begin{proof}
	Follows by \cite[Proposition 3.3]{BR}.
\end{proof}
Since every skeleton of $\Xan$ has the structure of a metric graph, we can define a potential kernel on every skeleton. 
Using the skeletal metric $\rho\colon \HM(\Xan)\times \HM(\Xan) \to \R_{\geq 0}$ from Definition~\ref{skeletalmetric}, we can extend the potential kernel to all of $\Xan$.

\begin{bem}\label{Wegpunkt}	
Let $V$ be a uniquely path-connected subset of $\Xan$ and let $\zeta$ 	be a point in $V$. 
For two points $x,y\in V$, we denote by $w_\zeta(x,y)$ the unique point in $V$ where the paths $[x,\zeta]$ and $[y,\zeta]$ first meet.
For example, for  a skeleton $\Gamma$ of $\Xan$ and a point $x_0\in \Gamma$,  the subset $\tau_\Gamma^{-1}(x_0)$  is uniquely path-connected.	
We therefore can define for two points $x,y\in \tau_\Gamma^{-1}(x_0)$  the point $w_\Gamma(x,y):= w_{x_0}(x,y)$.
\end{bem}

\begin{defn} Let $\zeta\in I(\Xan)$. We define \emph{potential kernel} $g_{\zeta}\colon \Xan \times \Xan \to (-\infty,\infty]$ corresponding to $\zeta$ by 
\begin{align*}
g_{\zeta}(x,y):= \begin{cases} 
\infty & \mbox{if } (x,y)\in \text{Diag}(X(K)),\\
g_{\zeta}(\tau_{\Gamma}(x),\tau_{\Gamma}(y))_{\Gamma} & \mbox{if } \tau_{\Gamma}(x)\neq \tau_{\Gamma}(y),\\
g_{\zeta}(\tau_{\Gamma}(y), \tau_{\Gamma}(x))_{\Gamma}+\rho((w_{\Gamma}(x,y), \tau_{\Gamma}(y))& \mbox{else}\\
\end{cases}
\end{align*} 
for a skeleton $\Gamma$  of $\Xan$ containing $\zeta$ and the skeletal metric $\rho\colon \HM(\Xan)\times \HM(\Xan) \to \R_{\geq 0}$.
\end{defn}

\begin{prop}
The function $g_{\zeta}$ is well-defined for every $\zeta\in I(\Xan)$. 
\end{prop}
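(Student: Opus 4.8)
The plan is to show that the three-case definition of $g_\zeta(x,y)$ does not depend on the choice of skeleton $\Gamma$ containing $\zeta$. First I would observe that, given $\zeta \in I(\Xan)$ and any two points $x, y \in \Xan$, it suffices by Proposition \ref{skeletonprop}(ii) to compare the value computed on a skeleton $\Gamma$ with the value computed on any larger skeleton $\Gamma'$ containing $\Gamma$ as a finite metric subgraph (and still containing $\zeta$): indeed, any two skeleta through $\zeta$ are both contained in a common larger one, so the cocycle-type comparison reduces to the nested case. The case $(x,y) \in \mathrm{Diag}(X(K))$ gives $\infty$ regardless of $\Gamma$, so only the two cases with $x$ or $y$ of arbitrary type and distinct images need work.

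\textbf{Step 1: reduction via retraction.} Recall that if $\Gamma \subset \Gamma'$ are skeleta, then $\tau_{\Gamma} = \tau_{\Gamma} \circ \tau_{\Gamma'}$, and $\tau_\Gamma$ restricted to $\Gamma'$ is the retraction of the metric graph $\Gamma'$ onto its subgraph $\Gamma$. Write $x' := \tau_{\Gamma'}(x)$, $y' := \tau_{\Gamma'}(y)$, so that $\tau_\Gamma(x) = \tau_\Gamma(x')$ and similarly for $y$. Thus the whole comparison takes place inside the single metric graph $\Gamma'$, comparing $g_\zeta(\cdot,\cdot)_{\Gamma'}$ on $\Gamma'$ with $g_\zeta(\cdot,\cdot)_\Gamma$ pulled back along $\tau_\Gamma|_{\Gamma'}$, plus the $\rho$-correction terms.

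\textbf{Step 2: the key identity on a metric graph and its subgraph.} The heart of the matter is the following claim about metric graphs: for $\zeta \in \Gamma \subset \Gamma'$ and arbitrary $u, v \in \Gamma'$,
\[
g_\zeta(u,v)_{\Gamma'} = g_\zeta\bigl(\tau_\Gamma(u), \tau_\Gamma(v)\bigr)_\Gamma + \bigl(\text{correction supported on the ``pendant'' parts}\bigr),
\]
where the correction is exactly $0$ when $\tau_\Gamma(u) \neq \tau_\Gamma(v)$ and equals $\rho\bigl(w_\Gamma(u,v), \tau_\Gamma(v)\bigr)$ — or rather its analogue measuring how far $u$ and $v$ run together into the same pendant tree beyond $\Gamma$ — otherwise. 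I would prove this by checking that both sides, as functions of $u$ with $v, \zeta$ fixed, are piecewise affine on $\Gamma'$, have the same $dd^c$ (namely $\delta_\zeta - \delta_v$; here one uses that on a pendant subtree hanging off $\Gamma$, a function that is locally constant there contributes no Laplacian, while the linear ramp contributing $\rho$ has the right slope jumps), and agree at the normalization point $\zeta$; uniqueness in Definition \ref{pkm} then forces equality. Symmetry of $g_\zeta(\cdot,\cdot)_{\Gamma'}$ from Lemma \ref{ggraph} is used to handle the asymmetry in the third case of the definition. Translating $u \mapsto x$, $v \mapsto y$ via Step 1 and matching the pendant-distance with the skeletal metric $\rho$ (which by Definition \ref{skeletalmetric} restricts to the shortest-path metric on each skeleton, hence agrees with path length in $\Gamma'$) yields precisely the three-case formula, independent of $\Gamma$.

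\textbf{Step 3: assembling well-definedness.} Given two arbitrary skeleta $\Gamma_1, \Gamma_2$ through $\zeta$, pick $\Gamma'$ containing both (Proposition \ref{skeletonprop}(ii)); apply Step 2 twice to see that the value computed via $\Gamma_1$ and via $\Gamma_2$ both equal the value computed via $\Gamma'$. One should also note that the formula is visibly insensitive to moving the base vertex set of a skeleton without changing it as a set (Proposition \ref{skeletonprop}(iii)), since the potential kernel of a metric graph depends only on the metric graph, not on a chosen vertex set.

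I expect the main obstacle to be \textbf{Step 2}: correctly bookkeeping the correction term in the ``else'' case, i.e.\ verifying that $\rho\bigl(w_\Gamma(x,y), \tau_\Gamma(y)\bigr)$ — as written with the retraction taken into the \emph{smaller} skeleton $\Gamma$ but the point $w$ taken in $\tau_\Gamma^{-1}(x_0)$ — is exactly the discrepancy between $g_\zeta(\tau_{\Gamma'}(x), \tau_{\Gamma'}(y))_{\Gamma'}$ and $g_\zeta(\tau_\Gamma(x), \tau_\Gamma(y))_\Gamma$ when $x,y$ retract to the same point of $\Gamma$ but different points of $\Gamma'$, together with checking that the asymmetric-looking expression is in fact symmetric in $x,y$. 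This is a careful but elementary computation with slopes and the $dd^c$-characterization; once it is done, the uniqueness statement in Definition \ref{pkm} does all the remaining work.
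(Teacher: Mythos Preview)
Your proposal is correct and rests on the same key idea as the paper's proof: the uniqueness clause in Definition~\ref{pkm} (a piecewise affine function on a metric graph is determined by its $dd^c$ and one value). The organization differs slightly. The paper reduces inductively to the case where $\Gamma_2$ is $\Gamma_1$ plus a single pendant edge $e$ attached at a vertex $z$, records the consequence of uniqueness as the pair of facts $g_\zeta(\cdot,w)_{\Gamma_1}\equiv g_\zeta(\cdot,w)_{\Gamma_2}$ on $\Gamma_1$ and $g_\zeta(\cdot,w)_{\Gamma_2}\equiv g_\zeta(z,w)_{\Gamma_2}$ on $e$ for $w\in\Gamma_1$, and then runs through the four combinations of whether $\tau_{\Gamma_1}(x)=\tau_{\Gamma_1}(y)$ and $\tau_{\Gamma_2}(x)=\tau_{\Gamma_2}(y)$, using also that $g_\zeta(\cdot,\tau_{\Gamma_2}(x))_{\Gamma_2}$ has slope $1$ along $[z,\tau_{\Gamma_2}(x)]$. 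Your Step~2 packages exactly these ingredients into a single identity for arbitrary nested $\Gamma\subset\Gamma'$, verified by matching $dd^c$ and the normalization at $\zeta$; this is a cosmetic repackaging rather than a different argument. One small point to watch in your write-up: the case split in the definition makes the right-hand side of your Step~2 identity only \emph{piecewise} defined in $u$, so when you compute its $dd^c$ you must also check continuity across the locus $\tau_\Gamma(u)=\tau_\Gamma(v)$ and account for the additional $\rho(w_{\Gamma'}(x,y),\tau_{\Gamma'}(y))$ term that appears when $\tau_{\Gamma'}(x)=\tau_{\Gamma'}(y)$; the paper's edge-by-edge induction makes this bookkeeping slightly more transparent, at the cost of a longer case list.
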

\begin{proof}
We have to show that $g_{\zeta}$ is independent of the skeleton $\Gamma$. 
Thus we consider $(x,y)\notin \text{Diag}(X(K))$.
 Let $\Gamma_1$ and $\Gamma_2$ be two skeleta containing $\zeta$, and we may assume that $\Gamma_1\subset \Gamma_2$.
Since $\Gamma_1$ is already a skeleton of $\Xan$, $\Gamma_2$ arises by just adding additional edges and vertices to  $\Gamma_1$ without getting new loops or cycles. 
Working inductively, we may assume that $\Gamma_2$ equals to the graph $\Gamma_1$ and one new edge $e$  attached to a vertex $z$ in $\Gamma_1$. 

Note that for every $w\in \Gamma_1$, due to uniqueness of the potential kernel and because its Laplacian is supported on~$\{\zeta,w\}$, we have
\begin{align}\label{Note}
 g_\zeta(\cdot,w)_{\Gamma_1}\equiv g_\zeta(\cdot,w)_{\Gamma_2} \text{ on~$\Gamma_1$, and }   g_\zeta(\cdot,w)_{\Gamma_2}\equiv g_\zeta(z,w)_{\Gamma_2}	\text{ on~$e$.}
\end{align}
Hence $g_\zeta(v,w)_{\Gamma_1}= g_\zeta(v,w)_{\Gamma_2} $ for every pair $(v,w)\in \Gamma_1\times\Gamma_1$.

 First, we consider $(x,y)$ with $\tau_{\Gamma_1}(x)\neq \tau_{\Gamma_1}(y)$. Then  automatically $\tau_{\Gamma_2}(x)\neq \tau_{\Gamma_2}(y)$ and $\tau_{\Gamma_1}(x)=\tau_{\Gamma_2}(x)$ or $\tau_{\Gamma_1}(y)=\tau_{\Gamma_2}(y)$.
Without loss of generality $\tau_{\Gamma_1}(y)=\tau_{\Gamma_2}(y)$.
As argued in (\ref{Note}) and using symmetry, we get 
 \begin{align*}
g_{\zeta}(\tau_{\Gamma_1}(x),\tau_{\Gamma_1}(y))_{\Gamma_1} & = g_{\zeta}(\tau_{\Gamma_1}(x),\tau_{\Gamma_1}(y))_{\Gamma_2}\\
&=  g_{\zeta}(\tau_{\Gamma_2}(x),\tau_{\Gamma_1}(y))_{\Gamma_2}\\
& =g_{\zeta}(\tau_{\Gamma_2}(x),\tau_{\Gamma_2}(y))_{\Gamma_2}.
 \end{align*}
Note for the second equation that either $\tau_{\Gamma_1}(x)=\tau_{\Gamma_2}(x)$ or $\tau_{\Gamma_1}(x)=z$ and $\tau_{\Gamma_2}(x)\in e$.

Now consider the case  $\tau_{\Gamma_1}(x)= \tau_{\Gamma_1}(y)$, and we set  $w:=w_{\Gamma_1}(x,y)$ (cf.~Remark~\ref{Wegpunkt}). 
Then (\ref{Note})  implies
\begin{align}\label{zwei} g_\zeta(\tau_{\Gamma_1}(x),\tau_{\Gamma_1}(y))_{\Gamma_1}=g_\zeta(\tau_{\Gamma_1}(x),\tau_{\Gamma_1}(y))_{\Gamma_2} =g_\zeta(\tau_{\Gamma_2}(x),\tau_{\Gamma_1}(y))_{\Gamma_2}.\end{align}
Note again that  that either $\tau_{\Gamma_1}(x)=\tau_{\Gamma_2}(x)$ or $\tau_{\Gamma_1}(x)=z$ and $\tau_{\Gamma_2}(x)\in e$.
In the case $\tau_{\Gamma_2}(x)=\tau_{\Gamma_2}(y)= \tau_{\Gamma_1}(y)=\tau_{\Gamma_1}(x)$, then the line above implies the claim.

If  $\tau_{\Gamma_2}(x)=\tau_{\Gamma_2}(y)\neq  \tau_{\Gamma_1}(y)=\tau_{\Gamma_1}(x)$, then  $\tau_{\Gamma_1}(y)=\tau_{\Gamma_1}(x)=z$ and
 we have 
$$\rho(w,\tau_{\Gamma_1}(x))= \rho(w,\tau_{\Gamma_2}(x)) + \rho(\tau_{\Gamma_2}(x),\tau_{\Gamma_1}(x)).$$  Identity  (\ref{zwei}) implies
\begin{align*}
g_\zeta(\tau_{\Gamma_1}(x),\tau_{\Gamma_1}(x))_{\Gamma_1} &=g_\zeta(\tau_{\Gamma_2}(x),\tau_{\Gamma_1}(x))_{\Gamma_2}\\
&=g_\zeta(\tau_{\Gamma_1}(x),\tau_{\Gamma_2}(x))_{\Gamma_2}\\
&=g_\zeta(\tau_{\Gamma_2}(x),\tau_{\Gamma_2}(x))_{\Gamma_2} - \rho(\tau_{\Gamma_1}(x),\tau_{\Gamma_2}(x)),\end{align*}
where we use 
for the last equation that $g_\zeta(\cdot,\tau_{\Gamma_2}(x))_{\Gamma_2}$ restricted to the path $[z,\tau_{\Gamma_2}(x)]$ is affine with slope $1$.
Adding these two equations up, we get 
$$g_\zeta(\tau_{\Gamma_1}(x),\tau_{\Gamma_1}(x))_{\Gamma_1}+ \rho(w,\tau_{\Gamma_1}(x))  =  g_\zeta(\tau_{\Gamma_2}(x),\tau_{\Gamma_2}(x))_{\Gamma_2} + \rho(w,\tau_{\Gamma_2}(x))$$ as we desired.

If $\tau_{\Gamma_2}(x)\neq \tau_{\Gamma_2}(y)$, then $z=\tau_{\Gamma_1}(x)=\tau_{\Gamma_1}(y)$ and $w=\tau_{\Gamma_2}(x)$ or  $w= \tau_{\Gamma_2}(y)$. Without loss of generality, $w=\tau_{\Gamma_2}(y)$. 
The potential kernel
 $g_\zeta(\cdot,\tau_{\Gamma_2}(x))_{\Gamma_2}$ restricted to the path $[z,\tau_{\Gamma_2}(x)]$ (which contains $w=\tau_{\Gamma_2}(y)$) is affine with slope $1$. Hence (\ref{zwei}) and symmetry yield $$g_\zeta(\tau_{\Gamma_1}(x),\tau_{\Gamma_1}(y))_{\Gamma_1} =g_\zeta(\tau_{\Gamma_1}(y),\tau_{\Gamma_2}(x))_{\Gamma_2}=g_\zeta(\tau_{\Gamma_2}(y), \tau_{\Gamma_2}(x))_{\Gamma_2} - \rho(\tau_{\Gamma_1}(y),w).$$
Consequently, $g_\zeta(x,y)$ is well-defined.
\end{proof}

\begin{bem} \label{VGL}
	If $X=\mathbb{P}^{1}$, it is easy to see that the function $g_{\zeta}$ coincides with the potential kernel $j_{\zeta}$ from \cite[\S 4.2]{BR} for every $\zeta\in I(\Xan)$. 
\end{bem}

\begin{lem}\label{g} Fix $\zeta\in I(\Xan)$. As a  function of two variables  $g_{\zeta}(x,y)$ satisfies the following properties:
 \begin{enumerate}

\item It is non-negative and $g_{\zeta}(\zeta,y)=0$.
\item $g_{\zeta}(x,y)=g_{\zeta}(y,x)$.
\item For every $\zeta'\in I(\Xan)$, we have
\begin{align}\label{independent}
g_{\zeta}(x,y)=g_{\zeta'}(x,y)-g_{\zeta'}(x,\zeta)-g_{\zeta'}(y,\zeta)+g_{\zeta'}(\zeta,\zeta).
\end{align}
\item It is finitely valued and continuous  off the diagonal and it is lsc on~$\Xan\times \Xan$ (where we understand $\Xan\times \Xan$  set theoretically and endowed with the product topology).
\end{enumerate}
\end{lem}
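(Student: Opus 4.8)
\textbf{Proof proposal for Lemma \ref{g}.}

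The plan is to reduce everything about $g_\zeta$ on $\Xan\times\Xan$ to the corresponding facts about the potential kernel $g_\zeta(\cdot,\cdot)_\Gamma$ on skeleta, which are already available from Lemma \ref{ggraph}, together with elementary properties of the retraction maps $\tau_\Gamma$ and the skeletal metric $\rho$. For (i), non-negativity is immediate from the case distinction in the definition (both the skeletal values and the $\rho$-correction term are non-negative), and $g_\zeta(\zeta,y)=0$ follows because $\tau_\Gamma(\zeta)=\zeta$ for any skeleton containing $\zeta$, so we land in the second or third case with first argument $\zeta$, where $g_\zeta(\zeta,\tau_\Gamma(y))_\Gamma=0$ by Definition \ref{pkm}(ii); in the third case one must also check $w_\Gamma(\zeta,y)=\zeta$, which holds since $\zeta\in\Gamma$ forces the path from any point to $\zeta$ to meet $\Gamma$ at $\tau_\Gamma$ of that point, so the paths $[\zeta,\zeta]$ and $[y,\zeta]$ first meet at $\zeta$ — wait, more carefully: $\tau_\Gamma(\zeta)=\tau_\Gamma(y)$ only if $y\in\tau_\Gamma^{-1}(\zeta)$, and then $w_\Gamma(\zeta,y)=\zeta$ directly. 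For (ii), symmetry: when $\tau_\Gamma(x)\ne\tau_\Gamma(y)$ use symmetry of $g_\zeta(\cdot,\cdot)_\Gamma$ from Lemma \ref{ggraph}; when $\tau_\Gamma(x)=\tau_\Gamma(y)=:x_0$ the third-case formula reads $g_\zeta(x_0,x_0)_\Gamma+\rho(w_\Gamma(x,y),x_0)$, and since $w_\Gamma(x,y)=w_\Gamma(y,x)$ (the first meeting point of the two paths is visibly symmetric) the expression is symmetric in $x,y$.

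For (iii), the scale-change identity, I would use equation (iii) of Lemma \ref{ggraph} on a single skeleton $\Gamma$ containing both $\zeta$ and $\zeta'$ (such a skeleton exists by Proposition \ref{skeletonprop}(ii)), and then argue that each of the five terms, being either a skeletal potential-kernel value between $\tau_\Gamma$-images or involving $\rho$, is computed by the very same skeleton $\Gamma$ on both sides; since by the well-definedness proposition the value of $g_\zeta$ does not depend on the chosen skeleton, the graph-level identity of Lemma \ref{ggraph} (applied pointwise to $\tau_\Gamma(x),\tau_\Gamma(y)$, and extended across the $\rho$-correction terms by linearity, noting $\rho$ appears identically on both sides when $\tau_\Gamma(x)=\tau_\Gamma(y)$) transfers verbatim. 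The one subtlety is the case $\tau_\Gamma(x)=\tau_\Gamma(y)$: here one checks that on the left the correction is $\rho(w_\Gamma(x,y),\tau_\Gamma(y))$ and on the right the four $g_{\zeta'}$-terms already incorporate the skeletal identity while the $\rho$-term cancels against itself, so the identity still holds. I'd phrase this as: fix $\Gamma\supset\{\zeta,\zeta'\}$, write out both sides using the definition, and invoke Lemma \ref{ggraph}; no genuinely new computation is needed.

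Part (iv) is the substantive one. Continuity and finiteness off the diagonal of $X(K)$: away from $\mathrm{Diag}(X(K))$, $g_\zeta$ is finite by construction, so I must show continuity at an arbitrary $(x_0,y_0)$ with $x_0\ne y_0$ (including possibly $x_0=y_0\in\HM(\Xan)$, which is \emph{not} excluded and is exactly where the third case with $\rho$ matters). Choose a skeleton $\Gamma$ large enough that it separates $x_0$ and $y_0$ appropriately, then use a simple open neighborhood from Theorem \ref{Str}, on which $\tau_\Gamma$ is continuous and $w_\Gamma$, $\rho$ vary continuously (for $x_0=y_0\in\HM$, on $\tau_\Gamma^{-1}(\tau_\Gamma(x_0))$ the map $(x,y)\mapsto\rho(w_\Gamma(x,y),\tau_\Gamma(y))$ is continuous because $\rho$ restricted to this uniquely path-connected set is continuous and $w_\Gamma$ depends continuously on the pair). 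Combining with joint continuity of $g_\zeta(\cdot,\cdot)_\Gamma$ from Lemma \ref{ggraph} gives continuity at such points; one also has to match the three branches of the definition along the locus $\tau_\Gamma(x)=\tau_\Gamma(y)$, where $\rho(w_\Gamma(x,y),\tau_\Gamma(y))\to 0$ as $\tau_\Gamma(x)$ and $\tau_\Gamma(y)$ separate, so the branches agree in the limit. For lower semicontinuity on all of $\Xan\times\Xan$: the only points to worry about are the diagonal points $(x_0,x_0)$ with $x_0\in X(K)$, where $g_\zeta=\infty$; there I must show $\liminf_{(x,y)\to(x_0,x_0)}g_\zeta(x,y)=\infty$. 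This is where I expect the main obstacle: it amounts to showing that as $x,y$ both approach a type-I point $x_0$, the quantity $\rho(w_\Gamma(x,y),\tau_\Gamma(y))\to\infty$. The idea is that for a simple open neighborhood $V=\tau_\Gamma^{-1}(\Omega)$ of $x_0$ (or an open ball, if $x_0$ is type I we're in case (1) of Theorem \ref{Str}), any point of $V$ retracting near $x_0$ is far in the $\rho$-metric from $\Gamma$; more precisely, exhaust by skeleta $\Gamma_n$ with $\tau_{\Gamma_n}(x_0)$ tending to $x_0$ and note $\rho(\tau_{\Gamma_n}(x_0),\Gamma)\to\infty$ since type-I points are at infinite skeletal distance, so for $x,y$ close to $x_0$ the meeting point $w_\Gamma(x,y)$ (computed in a suitable enlarged skeleton) has $\rho(w_\Gamma(x,y),\tau_\Gamma(y))$ arbitrarily large. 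I would make this precise by picking, for each $N$, a skeleton $\Gamma'\supset\Gamma$ and a neighborhood $V_N$ of $x_0$ such that every $x\in V_N$ has $\rho(\tau_{\Gamma'}(x),\Gamma)>N$, hence for $x,y\in V_N$ with $\tau_\Gamma(x)=\tau_\Gamma(y)$ one gets $g_\zeta(x,y)\ge\rho(w_\Gamma(x,y),\tau_\Gamma(y))>N$, while if $\tau_\Gamma(x)\ne\tau_\Gamma(y)$ one uses instead the enlarged skeleton $\Gamma'$ in the definition and the same estimate; letting $N\to\infty$ gives the claimed $\liminf$. The rest of lsc (at points where $g_\zeta$ is finite) follows from continuity there, already established.
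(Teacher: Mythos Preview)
Your treatment of parts (i)--(iii) is correct and is exactly the reduction to Lemma~\ref{ggraph} that the paper indicates; the only thing to add is that for (iii) one should, as the paper notes, fix once and for all a skeleton $\Gamma$ containing both $\zeta$ and $\zeta'$ (Proposition~\ref{skeletonprop}(ii)) so that all four terms on the right are computed on the same graph.

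In part (iv) there is a genuine error. You assert that on $\tau_\Gamma^{-1}(\tau_\Gamma(x_0))$ the map $(x,y)\mapsto\rho(w_\Gamma(x,y),\tau_\Gamma(y))$ is continuous ``because $\rho$ restricted to this uniquely path-connected set is continuous''. This is false: the skeletal metric $\rho$ does \emph{not} induce the Berkovich topology on $\HM(\Xan)$. Concretely, for a type~II point $x_0$ (over an algebraically closed $K$ the residue field is infinite) one can find a sequence $z_n$ of type~II points with $\tau_\Gamma(z_n)=x_0$, $\rho(z_n,x_0)=c>0$ fixed, yet $z_n\to x_0$ in the Berkovich topology (let the $z_n$ sit in pairwise distinct tangent directions at $x_0$). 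Then $g_\zeta(z_n,z_n)=g_\zeta(x_0,x_0)_\Gamma+c>g_\zeta(x_0,x_0)$, so $g_\zeta$ is \emph{not} continuous at $(x_0,x_0)$. Consequently your final sentence, ``the rest of lsc follows from continuity there'', does not cover diagonal points $(x_0,x_0)$ with $x_0\in\HM(\Xan)$, and this is precisely where lsc still needs an argument. (Relatedly, ``off the diagonal'' in the statement means off the full diagonal of $\Xan\times\Xan$, not off $\mathrm{Diag}(X(K))$; the example above shows continuity genuinely fails on the $\HM$-diagonal.)

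The fix is short. For $x_0\in I(\Xan)$, enlarge $\Gamma$ so that $x_0\in\Gamma$; then for every $(x,y)$ one has $g_\zeta(x,y)\ge g_\zeta(\tau_\Gamma(x),\tau_\Gamma(y))_\Gamma$ (equality in cases~1,~2 of the definition, and the $\rho$-term in case~3 is $\ge 0$), and the right-hand side is a continuous function of $(x,y)$ that agrees with $g_\zeta$ at $(x_0,x_0)$, giving lsc there. For $x_0$ of type~IV one cannot put $x_0$ into a skeleton, but your own ball argument for the type~I case works verbatim: for a fundamental system of open balls $V\ni x_0$ with boundary point $\zeta_V\in[x_0,\tau_\Gamma(x_0)]$, any $x,y\in V$ have $w_\Gamma(x,y)\in\overline{V}$, hence $\rho(w_\Gamma(x,y),\tau_\Gamma(x_0))\ge\rho(\zeta_V,\tau_\Gamma(x_0))\to\rho(x_0,\tau_\Gamma(x_0))$ as $V$ shrinks, which yields $\liminf g_\zeta(x,y)\ge g_\zeta(x_0,x_0)$. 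Your argument for the type~I diagonal (where the value is $\infty$) is correct; the detour through an auxiliary $\Gamma'$ is unnecessary, since the same ball argument with the fixed $\Gamma$ already gives $\rho(w_\Gamma(x,y),\tau_\Gamma(x_0))>N$.
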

\begin{proof}
All properties follow by construction and the properties of the potential kernel on a metric graph from Lemma~\ref{ggraph}.
Note for the third assertion that we choose a skeleton such that $\zeta,\zeta'\in \Gamma$. 
A detailed proof od property iv) can be found in \cite{Wa}.
\end{proof}

 %Notation auf klein g aendern?
\begin{prop}\label{G1} For  fixed points $\zeta\in I(\Xan)$ and $y\in \Xan$, we consider the function $G_{\zeta,y}:=g_{\zeta}(\cdot,y)\colon \Xan \to (-\infty,\infty]$.
Then $G_{\zeta,y}$ defines a current in $D^0(\Xan)$ with $$dd^c G_{\zeta,y}=  \delta _\zeta - \delta_y.$$
Moreover, the following hold: 
\begin{enumerate}
	\item If $y$ is of type II or III, then $G_{\zeta,y}\in A^0(\Xan)$ and  coincides with $g_{y,\zeta}$ from Proposition~\ref{ThF}.
	\item If $y$ is of type IV, the function  $G_{\zeta,y}$ is finitely valued and continuous on~$\Xan$. 
	\item If $y$ is of type I, then $G_{\zeta,y}$ is finitely valued on 
	$\Xan\backslash \{y\}$ and continuous on~$\Xan$ when we endow $(-\infty,\infty]$ with the topology of a half-open interval.
\end{enumerate}
Hence  $G_{\zeta,y}$ is  subharmonic on~$\Xan\backslash\{y\}$ for every fixed $y\in \Xan$.

\end{prop}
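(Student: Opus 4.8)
The plan is to verify each claim locally, using that everything is already known on skeleta. The first and main task is to show that $G_{\zeta,y}$ defines a current in $D^0(\Xan)$ with $dd^c G_{\zeta,y}=\delta_\zeta-\delta_y$. By Proposition~\ref{Hom} the space $D^0(\Xan)$ is identified with $\Hom(I(\Xan),\R)$, so I first have to check that $G_{\zeta,y}$ restricted to $I(\Xan)$ is finitely valued — this is immediate from Lemma~\ref{g}(iv) since $(x,y)$ with $x\in I(\Xan)$ never lies on $\mathrm{Diag}(X(K))$ — and $\R$-linear in the appropriate sense, which again follows from Lemma~\ref{g}(iv) (continuity/finiteness off the diagonal) together with the fact that on any fixed skeleton $\Gamma$ containing $\zeta$ and $\tau_\Gamma(y)$, the function $g_\zeta(\cdot,y)_\Gamma$ is piecewise affine, hence lies in $A^0(\Xan)$ after composing with $\tau_\Gamma$, and thus defines an element of $D^0(\Xan)$ with the correct Laplacian $\delta_\zeta-\delta_{\tau_\Gamma(y)}$. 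For the Laplacian identity on all of $\Xan$ I would pair against an arbitrary $f\in A_c^0(\Xan)$: choosing a skeleton $\Gamma$ large enough that $f=F\circ\tau_\Gamma$ and $\zeta,\tau_\Gamma(y)\in\Gamma$, the definition of $g_\zeta$ shows $G_{\zeta,y}=g_\zeta(\cdot,\tau_\Gamma(y))_\Gamma\circ\tau_\Gamma$ up to a term constant along $\tau_\Gamma$-fibers through the extra $\rho$-contribution; since $dd^c f$ is supported on $\Gamma$, this extra term is annihilated in the pairing, and we reduce to the graph statement $dd^c g_\zeta(\cdot,w)_\Gamma=\delta_\zeta-\delta_w$ from Definition~\ref{pkm}, combined with $\tau_\Gamma(y)$ being replaced by $y$ via a retraction argument (the mass $\delta_{\tau_\Gamma(y)}$ moves to $\delta_y$ because $f$ takes the same value at $y$ and $\tau_\Gamma(y)$ when $y\notin\Gamma$, and when $y$ is of type I one must enlarge $\Gamma$ so that $\tau_\Gamma(y)$ is arbitrarily close and pass to the limit, using compact support of $f$). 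The cleanest route is actually to use the three-point identity Lemma~\ref{g}(iii) to reduce to a single reference point $\zeta'\in I(\Xan)$ of type II and then invoke Proposition~\ref{ThF}.

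For the three sub-cases (i)–(iii), the strategy is: pick a skeleton $\Gamma$ with $\zeta\in\Gamma$ and, when $y\in I(\Xan)$, also $y\in\Gamma$.

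In case (i), $y$ is of type II or III, so $y=\tau_\Gamma(y)\in\Gamma$; then by definition $G_{\zeta,y}=g_\zeta(\cdot,y)_\Gamma\circ\tau_\Gamma$ is smooth, i.e. lies in $A^0(\Xan)$, and it has Laplacian $\delta_\zeta-\delta_y$ and vanishes at... — rather, one compares with Proposition~\ref{ThF}: the function $g_{y,\zeta}$ there is the unique smooth function with $dd^c g_{y,\zeta}=\delta_y-\delta_\zeta$ and $g_{y,\zeta}(y)=0$; but $-G_{\zeta,y}$... no: one checks $G_{\zeta,y}$ satisfies $dd^c=\delta_\zeta-\delta_y$ and $G_{\zeta,y}(\zeta)=0$, which by the symmetry in Lemma~\ref{ggraph} together with uniqueness in Proposition~\ref{ThF} (applied with the roles of the two points and a sign) forces $G_{\zeta,y}=g_{y,\zeta}$ — here I would just quote the normalization $g_\zeta(\zeta,y)=0$ from Lemma~\ref{g}(i) and the uniqueness in Proposition~\ref{ThF}.

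In case (ii), $y$ is of type IV: there is a decreasing sequence of skeleta (or just a single skeleton) with $\tau_\Gamma(y)$ of type II; since $y$ is of type IV, $\tau_\Gamma(y)\ne y$ always but $y$ has a fundamental system of open ball neighborhoods $V$ with $\tau_\Gamma^{-1}$-type structure, and on each such ball $G_{\zeta,y}$ is locally constant (equal to $g_\zeta(\tau_\Gamma(y),\tau_\Gamma(\cdot))_\Gamma$ plus a $\rho$-term that is continuous and, near $y$ of type IV, bounded — in fact one uses that along the path from $\tau_\Gamma(y)$ toward $y$ the potential kernel is affine of slope $1$, matching the $\rho$-term, so the two contributions combine into a single continuous function). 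Finite-valuedness on all of $\Xan$ follows because $(x,y)\notin\mathrm{Diag}(X(K))$ for every $x$ when $y$ is not of type I. Continuity is then Lemma~\ref{g}(iv) restricted to the slice $\{y\}$, noting no diagonal issue arises.

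In case (iii), $y\in X(K)$ is of type I: then $G_{\zeta,y}(y)=\infty$ and $G_{\zeta,y}$ is finite on $\Xan\setminus\{y\}$ (again because $(x,y)$ lies on the diagonal of $X(K)$ only for $x=y$). For continuity into the half-open interval $(-\infty,\infty]$, I would argue that near $y$ the function behaves like $\rho(\cdot,\tau_\Gamma(y))$-type blow-up: on the open ball $V$ around $y$ cut out by a skeleton $\Gamma$ (with $\tau_\Gamma(y)=:z$ a type-II point), for $x\in V$ we have $w_\Gamma(x,y)\in[x,z]$ or $[y,z]$ and $G_{\zeta,y}(x)=g_\zeta(z,z)_\Gamma+\rho(w_\Gamma(x,y),z)$, which tends to $+\infty$ as $x\to y$ (since the path distance to $z$ through $y$ is infinite — a type-I point is at infinite skeletal distance). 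Away from $y$ continuity is Lemma~\ref{g}(iv) again. The final sentence — $G_{\zeta,y}$ is subharmonic on $\Xan\setminus\{y\}$ — then follows: on $\Xan\setminus\{y\}$ the function is continuous (cases (i)–(iii), with the type-I point removed) hence upper semi-continuous and lies in $D^0(\Xan\setminus\{y\})$, and $dd^c G_{\zeta,y}=\delta_\zeta-\delta_y$ restricts to $\delta_\zeta\ge 0$ there; by the smoothness criterion in Remark~\ref{BemSH} (or directly Definition~\ref{Prop not lisse}, since $\delta_\zeta$ is a positive measure on $I(\Xan\setminus\{y\})$), $G_{\zeta,y}$ is subharmonic there.

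The main obstacle I anticipate is the bookkeeping in establishing $dd^c G_{\zeta,y}=\delta_\zeta-\delta_y$ when $y$ is of type I or IV, i.e. when $y$ does not lie on any skeleton: one must carefully justify that the $\rho$-correction term in the definition of $g_\zeta$ does not contribute to the Laplacian (because any test function $f\in A_c^0$ is constant along the relevant path beyond its supporting skeleton) and that the point mass really sits at $y$ rather than at $\tau_\Gamma(y)$, which requires either a limiting argument over a shrinking fundamental system of neighborhoods of $y$ together with compact support of $f$, or a direct appeal to Proposition~\ref{Hom} showing a current in $D^0$ is determined by its values on type-II/III points where everything is already known. The rest is routine once the reduction to the graph-theoretic Lemma~\ref{ggraph} and Proposition~\ref{ThF} is in place.
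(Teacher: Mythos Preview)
Your overall strategy is sound and the proof goes through, but your route to the Laplacian identity for $y\notin I(\Xan)$ differs from the paper's. The paper first treats $y\in I(\Xan)$ exactly as you do (enlarge $\Gamma$ so that $y\in\Gamma$, then $G_{\zeta,y}=g_\zeta(\cdot,y)_\Gamma\circ\tau_\Gamma\in A^0(\Xan)$), and for $y$ of type I or IV it takes a sequence $y_n\in I(\Xan)$ with $y_n\to y$, observes that $g_\zeta(\cdot,y_n)\to g_\zeta(\cdot,y)$ pointwise on $I(\Xan)$ (by symmetry and the already-established continuity of $g_\zeta(\cdot,x)$ for $x\in I(\Xan)$), and then invokes the continuity of $dd^c\colon D^0(\Xan)\to D^1(\Xan)$ from \cite[Proposition 3.3.4]{Th} to conclude $dd^c G_{\zeta,y}=\delta_\zeta-\delta_y$. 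Your direct pairing argument---choosing $\Gamma$ with $\zeta\in\Gamma$ and $f=F\circ\tau_\Gamma$, noting that $G_{\zeta,y}|_\Gamma=g_\zeta(\cdot,\tau_\Gamma(y))_\Gamma$ so the pairing against $dd^cf$ reduces to the graph case, and then using $f(\tau_\Gamma(y))=f(y)$---is a legitimate alternative and arguably more elementary, since it avoids the limit and the cited continuity result; what it costs is the (standard but unstated) fact that every $f\in A^0(\Xan)$ on the compact curve factors globally through some skeleton.

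One genuine soft spot: for cases (ii) and (iii) you appeal to Lemma~\ref{g}(iv) for continuity of $G_{\zeta,y}$, but that lemma only gives continuity of $g_\zeta$ \emph{off the diagonal} and lower semi-continuity on it, so it does not by itself yield continuity of $G_{\zeta,y}$ at $x=y$ when $y$ is of type IV (nor the blow-up behaviour at $x=y$ of type I). You do gesture at the correct fix---analyze the explicit formula $G_{\zeta,y}(x)=g_\zeta(\tau_\Gamma(y),\tau_\Gamma(y))_\Gamma+\rho(w_\Gamma(x,y),\tau_\Gamma(y))$ on the ball around $y$ and use continuity of $\rho$ there---and this is exactly what the paper does; but your phrase ``locally constant'' for the type IV case is incorrect (the $\rho$-term varies), and the argument should be stated cleanly rather than as a parenthetical correction.
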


\begin{proof}First, note that by construction $G_{\zeta,y}(x)=\infty$ if and only if $x=y\in X(K)$.
Thus the restriction of $G_{\zeta,y}$ to $I(\Xan)$ is always finite, and so $G_{\zeta,y}$ defines a current in $D^0(\Xan)$.
Here, one should have in mind that the vector space $D^0(\Xan)$ is isomorphic to the vector space $\Hom(I(\Xan),\R)$ endowed with the pointwise convergence (see Proposition~\ref{Hom}). We always use this identification.

Let $\Gamma$ always be  a skeleton that contains $\zeta$.
To calculate the Laplacian, we first consider a point $y\in I(\Xan)$.
We may  extend $\Gamma$ such that $y\in \Gamma$.
Then  $G_{\zeta,y}=g_{\zeta}(\cdot, y)_\Gamma\circ \tau_{\Gamma}$ on~$\Xan$ since
$$\rho(w_\Gamma(x,y),y)=\rho(\tau_\Gamma(x),\tau_\Gamma(y))=0$$ if $\tau_{\Gamma}(x)=\tau_{\Gamma}(y)=y$. 
Since the potential kernel $g_{\zeta}(\cdot,y)_{\Gamma}$  is the unique piecewise affine function on the metric graph $\Gamma$ such that $dd^c g_{\zeta}(\cdot,y)_{\Gamma} =  \delta_\zeta-\delta_y$ and $g_{\zeta}(\zeta,y)_{\Gamma}=0$, we have $G_{\zeta,y}=g_{\zeta}(\cdot,y)\in A^0(\Xan)$ and $dd^c G_{\zeta,y}= \delta_\zeta -  \delta_y $ on~$\Xan$ by the construction of the Laplacian.
In particular, the function $G_{\zeta,y}$ is continuous on~$\Xan$. 
Uniqueness in Proposition~\ref{ThF} implies that $G_{\zeta,y}$  coincides with $g_{y,\zeta}$.

Now consider an arbitrary $y\in \Xan\backslash I(\Xan)$ and let $(y_n)_{n\in \N}$ be a sequence of points $y_n\in I(\Xan)$ converging to $y$. 
Then $g_\zeta(\cdot,y_n)$ converges to $g_\zeta(\cdot,y)$ in the topological  vector space $\Hom(I(\Xan),\R)\simeq D^0(\Xan)$, i.e.~for every fixed point $x\in I(\Xan)$ we have $g_\zeta(x,y_n)=g_\zeta(y_n,x)$ converges to $g_\zeta(x,y)=g_\zeta(y,x)$ for $n\to \infty$ since $g_\zeta(\cdot,x)$ is smooth, and so continuous.
The differential operator $dd^c\colon D^0(\Xan)\to D^1(\Xan)$ is continuous by \cite[Proposition 3.3.4]{Th}, and hence $dd^c g_\zeta(\cdot,y) = \delta_{\zeta}- \delta_y.$

If $y$ is a point of type I or IV, the connected component $U$ of $\Xan\backslash \Gamma$ containing $y$ is an open ball. 
For a type IV point $y$, we have  \begin{align*}G_{\zeta,y}(x)&=g_\zeta(\tau_\Gamma(y),\tau_\Gamma(y))_\Gamma+\rho(w_{\Gamma}(x,y),\tau_\Gamma(x))\\ &=g_\zeta(\tau_\Gamma(y),\tau_\Gamma(y))_\Gamma+\rho(w_{\tau_\Gamma(y)}(x,y),\tau_\Gamma(x))
\end{align*} for every $x\in U$. Note that $\tau_\Gamma(x)=\tau_\Gamma(y)$ and $\zeta\in \Gamma$.
If $y$ is of type I, we have this identity on~$U\backslash\{y\}$.
Since the path distance metric $\rho$ is continuous on~$U$, it follows that $G_{\zeta,y}$  is continuous on~$U$ in both cases with $\lim_{x\to y}G_{\zeta,y}(x)=G_{\zeta,y}(y)=\infty$ if $y$ is of type~I. 

In particular, $G_{\zeta,y}$ is upper semi-continuous on~$\Xan\backslash \{y\}$ with $dd^cG_{\zeta,y}=\delta_{\zeta}- \delta_y$ for every fixed $y\in \Xan$.
Hence $G_{\zeta,y}$ is subharmonic on~$\Xan\backslash \{y\}$. 
\end{proof}

To introduce a capacity theory  and define potential functions  on~$\Xan$ in the following sections, we define  a potential 
 kernel $g_\zeta(x,y)$ for every point $\zeta\in \Xan$ (cf.~\cite[\S 4.4]{BR}).
In \cite{BR}, this is done with the help of the Gauss point. In our case, we have to fix a base point for the definition.

\begin{defn}\label{Kernel}
Fix  $\zeta_0\in I(\Xan)$. 
We define  $g_{\zeta_0}\colon \Xan\times \Xan\times \Xan\to  [-\infty,\infty]$  as $g_{\zeta_0}(\zeta,x,y)=\infty $ if $x=y=\zeta\in X(K)$ and else as 
$$g_{\zeta_0}(\zeta,x,y):=g_{\zeta_0}(x,y)-g_{\zeta_0}(x,\zeta)- g_{\zeta_0}(y,\zeta).$$
\end{defn}
\begin{kor}\label{3Var}
For  fixed points $\zeta_0\in I(\Xan)$ and $\zeta,y\in \Xan$, the potential kernel $g_{\zeta_0}(\zeta,\cdot,y)$  defines a current in $D^0(\Xan)$ with
$$dd ^c g_{\zeta_0}(\zeta,\cdot,y)= \delta_\zeta-\delta_y,$$
and  it extends $g_{\zeta}(x,y)$  in the following way 
$$ g_{\zeta}(\zeta,x,y)=g_{\zeta}(x,y)$$ if $\zeta\in I(\Xan)$.
\end{kor}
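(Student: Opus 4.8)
The plan is to deduce everything directly from Definition~\ref{Kernel} together with Proposition~\ref{G1} and Lemma~\ref{g}, since $g_{\zeta_0}(\zeta,\cdot,y)$ is by construction a finite $\R$-linear combination of functions of the form $g_{\zeta_0}(\cdot,\star)$ restricted to $I(\Xan)$. First I would fix $\zeta_0\in I(\Xan)$ and $\zeta,y\in\Xan$ and observe that, away from the exceptional locus where a value $\infty$ can occur, one has the identity of functions on $I(\Xan)$
\begin{align*}
g_{\zeta_0}(\zeta,\cdot,y)=g_{\zeta_0}(\cdot,y)-g_{\zeta_0}(\cdot,\zeta)-g_{\zeta_0}(y,\zeta),
\end{align*}
where the last term is a constant. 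Each of $g_{\zeta_0}(\cdot,y)=G_{\zeta_0,y}$ and $g_{\zeta_0}(\cdot,\zeta)=G_{\zeta_0,\zeta}$ lies in $D^0(\Xan)$ by Proposition~\ref{G1} (their restrictions to $I(\Xan)$ are finite, using Lemma~\ref{g}iv), and adding a constant keeps us in $D^0(\Xan)$ under the identification $D^0(\Xan)\simeq\Hom(I(\Xan),\R)$ of Proposition~\ref{Hom}. Hence $g_{\zeta_0}(\zeta,\cdot,y)\in D^0(\Xan)$.

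Next I would compute the Laplacian using the linearity and continuity of $dd^c\colon D^0(\Xan)\to D^1(\Xan)$. Since $dd^c$ annihilates constants, Proposition~\ref{G1} gives
\begin{align*}
dd^c g_{\zeta_0}(\zeta,\cdot,y)=dd^c G_{\zeta_0,y}-dd^c G_{\zeta_0,\zeta}=(\delta_{\zeta_0}-\delta_y)-(\delta_{\zeta_0}-\delta_\zeta)=\delta_\zeta-\delta_y,
\end{align*}
which is the asserted differential equation and, reassuringly, no longer depends on the base point $\zeta_0$.

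Finally, for the extension claim, assume $\zeta\in I(\Xan)$. Then I would invoke Lemma~\ref{g}iii) with the roles of the base point played by $\zeta_0$ and $\zeta$: the symmetric two-variable kernel satisfies
\begin{align*}
g_{\zeta}(x,y)=g_{\zeta_0}(x,y)-g_{\zeta_0}(x,\zeta)-g_{\zeta_0}(y,\zeta)+g_{\zeta_0}(\zeta,\zeta),
\end{align*}
and since $g_{\zeta_0}(\zeta,\zeta)=0$ by Lemma~\ref{g}i) (as $\zeta\in I(\Xan)$, the diagonal value is the piecewise-affine one, which vanishes because $g_{\zeta_0}(\zeta,\zeta)_\Gamma=0$), the right-hand side is exactly $g_{\zeta_0}(\zeta,x,y)$ by Definition~\ref{Kernel}. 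This proves $g_{\zeta}(\zeta,x,y)=g_\zeta(x,y)$. The only point requiring a little care is bookkeeping of the $\pm\infty$ values: one should check that on the locus excluded in Definition~\ref{Kernel} (namely $x=y=\zeta\in X(K)$) the displayed identities still make sense, and that off that locus all terms entering the linear combination are finite on $I(\Xan)$ so that the $D^0$-statement is unambiguous — but this is routine given Lemma~\ref{g}iv) and the description of $G_{\zeta_0,\star}$ in Proposition~\ref{G1}. I expect no serious obstacle here; the statement is essentially a formal consequence of the two-variable theory already in place, and the main task is simply to organize the case distinction cleanly.
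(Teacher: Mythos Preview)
Your argument for the current and Laplacian parts is correct and is exactly what the paper means by ``follows directly by construction, Proposition~\ref{G1} and linearity of $dd^c$.''

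The extension claim, however, contains a genuine error. You invoke Lemma~\ref{g}iii) with an arbitrary base point $\zeta_0$ and then assert that $g_{\zeta_0}(\zeta,\zeta)=0$, justifying this via Lemma~\ref{g}i) and the remark that ``$g_{\zeta_0}(\zeta,\zeta)_\Gamma=0$''. This is false: Lemma~\ref{g}i) says $g_{\zeta_0}(\zeta_0,y)=0$ (the kernel vanishes when the \emph{argument equals the base point}), not that the kernel vanishes on the diagonal. On a metric graph the diagonal value $g_{\zeta_0}(\zeta,\zeta)_\Gamma$ is typically strictly positive when $\zeta\neq\zeta_0$ (for instance on the segment $[0,1]$ with $\zeta_0=0$ one has $g_0(x,y)=\min(x,y)$, so $g_0(1,1)=1$). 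Consequently your displayed identity only yields $g_\zeta(x,y)=g_{\zeta_0}(\zeta,x,y)+g_{\zeta_0}(\zeta,\zeta)$, which is not the statement to be proved and is off by a nonzero constant in general.

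The fix is simpler than the route you took: the assertion is about the three-variable kernel with base point equal to $\zeta$ itself. Applying Definition~\ref{Kernel} with $\zeta_0=\zeta\in I(\Xan)$ gives
\[
g_\zeta(\zeta,x,y)=g_\zeta(x,y)-g_\zeta(x,\zeta)-g_\zeta(y,\zeta),
\]
and now Lemma~\ref{g}i) together with symmetry ii) shows $g_\zeta(x,\zeta)=g_\zeta(\zeta,x)=0$ and $g_\zeta(y,\zeta)=0$, so $g_\zeta(\zeta,x,y)=g_\zeta(x,y)$. No detour through Lemma~\ref{g}iii) is needed.
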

\begin{proof} Follows directly by construction,
Proposition~\ref{G1} and linearity of $dd^c$.
\end{proof}

\section{Capacity theory}\label{Capacity}

The main goal of this paper is to prove an analogue of the Energy Minimization Principle. 
On this way, we need to prove some partial results as for example Frostman's theorem. 
One of the tools of showing Frostman's theorem is capacity. 
We therefore introduce capacity analogously as in \cite[\S 6.1]{BR}, show all needed properties and compare our notion with Thuillier's capacity in \cite[\S 3.6.1]{Th}. 

\begin{defn}\label{DefCap} Let $\zeta_0\in I(\Xan)$ be a fixed base point. 
 Then for a  point $\zeta\in \Xan$ and for a probability measure $\nu$ on $\Xan$ with $\supp(\nu)\subset \Xan\backslash \{\zeta\}$, we define the \emph{energy integral} as 	$$I_{\zeta_0,\zeta}(\nu):=\int\int g_{\zeta_0}(\zeta,x,y)~d\nu(x)d\nu(y).$$
Recall from Definition~\ref{Kernel} the extended potential kernel $g_{\zeta_0}(\zeta,\cdot,\cdot)$, which is lower semi-continuous on~$\Xan\backslash \{\zeta\}\times\Xan\backslash \{\zeta\}$ by Lemma~\ref{g} and Proposition~\ref{G1}. Hence the Lebesgue integral with respect to $\nu$ is well-defined.

With the help of the energy integral, one can introduce \emph{capacity} of a proper $E$ of $\Xan$ with respect to $\zeta\in \Xan\backslash E$ as 
$$\gamma_{\zeta_0,\zeta}(E):=e^{- \inf_\nu I_{\zeta_0,\zeta}(\nu)}$$ where $\nu$ varies over all probability measures supported on~$E$.
We say that $E$ \emph{has positive capacity} if there is a $\zeta_0\in I(\Xan)$ and a point $\zeta\in \Xan\backslash E$ such that $\gamma_{\zeta_0,\zeta}(E)>0$, i.e.~there exists a probability measure $\nu$ supported on~$E$ with $I_{\zeta_0,\zeta}(\nu)< \infty$.
	Otherwise, we say that $E$  \emph{has capacity zero}.
\end{defn}
\begin{bem}\label{Ecompact}
It follows from the definition of the capacity of $E$ with respect to $\zeta \in \Xan \backslash E$ that 
$$\gamma_{\zeta_0,\zeta}(E)=\sup_{E'\subset E,~ E' \text{ compact}} \gamma_{\zeta_0,\zeta}(E').$$
\end{bem}
\begin{lem} \label{CapB} Positive capacity is independent of the choice of the chosen base point $\zeta_0$.
\end{lem}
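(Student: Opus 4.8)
The claim is that the property "positive capacity" does not depend on the choice of base point $\zeta_0 \in I(\Xan)$. The plan is to show that the two energy integrals $I_{\zeta_0,\zeta}(\nu)$ and $I_{\zeta_0',\zeta}(\nu)$ differ by a quantity that is finite and bounded uniformly in $\nu$, provided $\nu$ is supported on a compact set $E$ with $\zeta \notin E$. By Remark \ref{Ecompact} it suffices to work with compact $E$, so first I would reduce to that case: $\gamma_{\zeta_0,\zeta}(E) > 0$ iff there is a probability measure $\nu$ on some compact $E' \subset E$ with $I_{\zeta_0,\zeta}(\nu) < \infty$.

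The key computation is to compare the two kernels. Using Definition \ref{Kernel}, $g_{\zeta_0}(\zeta,x,y) = g_{\zeta_0}(x,y) - g_{\zeta_0}(x,\zeta) - g_{\zeta_0}(y,\zeta)$, and the analogous formula holds for $\zeta_0'$. Now I would invoke the transformation rule from Lemma \ref{g}(iii), namely
$$g_{\zeta_0}(x,y) = g_{\zeta_0'}(x,y) - g_{\zeta_0'}(x,\zeta_0) - g_{\zeta_0'}(y,\zeta_0) + g_{\zeta_0'}(\zeta_0,\zeta_0),$$
(choosing a skeleton $\Gamma$ containing both $\zeta_0$ and $\zeta_0'$), and substitute this into the expression for $g_{\zeta_0}(\zeta,x,y)$. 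After expanding $g_{\zeta_0}(x,y)$, $g_{\zeta_0}(x,\zeta)$, $g_{\zeta_0}(y,\zeta)$ all in terms of $g_{\zeta_0'}$ and collecting terms, the terms that depend on both $x$ and $y$ cancel, and one is left with
$$g_{\zeta_0}(\zeta,x,y) = g_{\zeta_0'}(\zeta,x,y) + \varphi(x) + \varphi(y) + c,$$
where $\varphi(x) = g_{\zeta_0'}(x,\zeta) - g_{\zeta_0'}(x,\zeta_0) $ up to signs and $c$ is a constant depending only on $\zeta_0,\zeta_0',\zeta$. (I would carry out this bookkeeping carefully; the exact form of $\varphi$ and $c$ is a routine but slightly tedious substitution.) Integrating against $d\nu(x)\,d\nu(y)$ and using that $\nu$ is a probability measure gives
$$I_{\zeta_0,\zeta}(\nu) = I_{\zeta_0',\zeta}(\nu) + 2\int \varphi \, d\nu + c.$$

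It remains to observe that $\int \varphi\, d\nu$ is finite. Since $\varphi$ is a difference of functions $g_{\zeta_0'}(\cdot,\zeta)$ and $g_{\zeta_0'}(\cdot,\zeta_0)$, by Proposition \ref{G1} the function $g_{\zeta_0'}(\cdot,\zeta)$ is finite and continuous on $\Xan \setminus \{\zeta\}$ (with the half-open interval topology near $\zeta$ if $\zeta$ is of type I), and $g_{\zeta_0'}(\cdot,\zeta_0)$ is everywhere finite and continuous since $\zeta_0 \in I(\Xan)$. Restricted to the compact set $E$, which avoids $\zeta$, the function $\varphi$ is continuous, hence bounded; so $\int \varphi\, d\nu$ is finite and in fact bounded uniformly over all probability measures $\nu$ on $E$. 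Therefore $I_{\zeta_0,\zeta}(\nu) < \infty$ iff $I_{\zeta_0',\zeta}(\nu) < \infty$, which gives $\gamma_{\zeta_0,\zeta}(E) > 0 \iff \gamma_{\zeta_0',\zeta}(E) > 0$. Taking the supremum over compact $E' \subset E$ and recalling that "positive capacity" only asks for the existence of \emph{some} base point and \emph{some} $\zeta$ making the capacity positive, the independence follows.

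The only mild subtlety — and the place I would be most careful — is handling the point $\zeta$ itself when it is of type I: there $g_{\zeta_0'}(\cdot,\zeta)$ takes the value $+\infty$, but since $E$ is compact and disjoint from $\zeta$ the continuity and boundedness of $\varphi$ on $E$ are not affected. One should also note at the outset that the definition of positive capacity already quantifies over $\zeta_0$, so strictly speaking the lemma is the statement that if $\gamma_{\zeta_0,\zeta}(E)>0$ for one base point then $\gamma_{\zeta_0',\zeta}(E)>0$ for every other base point (with the same $\zeta$), which is exactly what the computation above delivers.
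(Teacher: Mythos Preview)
Your approach is correct and essentially the same as the paper's, but you have not pushed the cancellation far enough. If you actually carry out the substitution you outline, using Lemma~\ref{g}(iii) on each of $g_{\zeta_0}(x,y)$, $g_{\zeta_0}(x,\zeta)$, $g_{\zeta_0}(y,\zeta)$, you find that \emph{all} $x$- and $y$-dependent terms cancel, not just the mixed ones: the result is
\[
g_{\zeta_0}(\zeta,x,y)=g_{\zeta_0'}(\zeta,x,y)+2g_{\zeta_0'}(\zeta,\zeta_0)-g_{\zeta_0'}(\zeta_0,\zeta_0),
\]
so your $\varphi$ is identically zero and the difference $I_{\zeta_0,\zeta}(\nu)-I_{\zeta_0',\zeta}(\nu)$ is a constant depending only on $\zeta_0,\zeta_0',\zeta$. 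This is exactly what the paper records. Consequently your reduction to compact $E$ via Remark~\ref{Ecompact} and the continuity argument for $\varphi$ on $E$ are unnecessary: once the difference is a finite constant (note $\zeta_0,\zeta_0'\in I(\Xan)$, so $g_{\zeta_0'}(\zeta,\zeta_0)$ and $g_{\zeta_0'}(\zeta_0,\zeta_0)$ are finite by Proposition~\ref{G1}), finiteness of one energy integral is immediately equivalent to finiteness of the other, for every $\nu$ supported on $E$.
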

\begin{proof}
Consider $\zeta_0,\zeta_0'\in I(\Xan)$, a proper subset $E$ of $\Xan$, a point $\zeta\in \Xan\backslash E$ and a probability measure $\nu$ supported on~$E$. 
We show that $I_{\zeta_0,\zeta}(\nu)$  is finite if and only if $I_{\zeta'_0,\zeta}(\nu)$ is finite.
Using Definition~\ref{Kernel} and Lemma~\ref{g}, we obtain for every $x,y\in E$ (note that $\zeta\notin E$)
$g_{\zeta_0}(\zeta,x,y)=  g_{\zeta'_0}(\zeta,x,y) + 2g_{\zeta'_0}(\zeta,\zeta_0)- g_{\zeta'_0}(\zeta_0,\zeta_0)$ where the last two terms are finite for all $\zeta\in \Xan\backslash E$.
Considering the energy integrals, we get
\begin{align*}
I_{\zeta_0,\zeta}(\nu)=I_{\zeta'_0,\zeta}(\nu)+ 2g_{\zeta'_0}(\zeta,\zeta_0)- g_{\zeta'_0}(\zeta_0,\zeta_0).
\end{align*}
Hence they differ by a finite constant.
\end{proof}
For the rest of the section, we therefore just fix a base point $\zeta_0\in I(\Xan)$. 
\begin{bem} \label{CapP}
	Let $E$ be a proper subset of $\Xan$ and let $\nu$  be a probability measure supported on~$E$. 
	Then for every $\zeta\in \Xan\backslash E$ 
	 \begin{align*}
I_{\zeta_0,\zeta}(\nu)&=	\int\int g_{\zeta_0}(\zeta,x,y)~d\nu(x)d\nu(y)\\
& = \int\int g_{\zeta_0}(x,y)~d\nu(x)d\nu(y)-2\int g_{\zeta_0}(x,\zeta)~d\nu(x),
	\end{align*}
 where the last term of the right hand side is finite since $g_{\zeta_0}(\cdot,\zeta)$ is continuous on the compact set $\supp(\nu)$ by Proposition~\ref{G1}.
 Thus
	 $I_{\zeta_0,\zeta}(\nu)$ is finite  if and only if $I_{\zeta_0,\xi}(\nu)$ is finite for  every point $\xi\in \Xan\backslash E$.
\end{bem}

\begin{lem} \label{CapII}If $E$ is a proper subset of $\Xan$ containing a point of $\HM(\Xan)$, then $E$ has positive capacity.
\end{lem}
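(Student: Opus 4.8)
The plan is to exhibit a single explicit probability measure on $E$ with finite energy integral, namely a Dirac mass at a point of $\HM(E)$, which forces the infimum defining the capacity to be finite.

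By hypothesis I may fix a point $y_0\in \HM(E)=E\cap\HM(\Xan)$; thus $y_0$ is of type II, III or IV, and in particular $y_0\notin X(K)$. Since $E$ is a proper subset of $\Xan$, I may also fix a point $\zeta\in\Xan\backslash E$, so that $\zeta\neq y_0$; and by Lemma~\ref{CapB} I am free to fix any base point $\zeta_0\in I(\Xan)$. Now consider $\nu:=\delta_{y_0}$, which is a probability measure with $\supp(\nu)=\{y_0\}\subset E\subset\Xan\backslash\{\zeta\}$, so that its energy integral is defined. Unwinding Definition~\ref{Kernel} and the computation in Remark~\ref{CapP}, and using that $(y_0,y_0)\notin\mathrm{Diag}(X(K))$ since $y_0\notin X(K)$, I get
$$I_{\zeta_0,\zeta}(\delta_{y_0})=g_{\zeta_0}(\zeta,y_0,y_0)=g_{\zeta_0}(y_0,y_0)-2\,g_{\zeta_0}(y_0,\zeta).$$

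It remains to see that both terms on the right are finite. The value $g_{\zeta_0}(y_0,\zeta)$ is a value of $g_{\zeta_0}$ off the diagonal, because $y_0\neq\zeta$, hence finite by Lemma~\ref{g}(iv). For $g_{\zeta_0}(y_0,y_0)=G_{\zeta_0,y_0}(y_0)$ I invoke Proposition~\ref{G1}: if $y_0$ is of type II or III then $G_{\zeta_0,y_0}\in A^0(\Xan)$, and if $y_0$ is of type IV then $G_{\zeta_0,y_0}$ is finitely valued and continuous on $\Xan$; in either case $G_{\zeta_0,y_0}(y_0)<\infty$. This is exactly the step where the hypothesis $y_0\in\HM(\Xan)$ is used: for a type I point $y_0$ one has $G_{\zeta_0,y_0}(y_0)=\infty$. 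Consequently $I_{\zeta_0,\zeta}(\delta_{y_0})<\infty$, so $\inf_\nu I_{\zeta_0,\zeta}(\nu)<\infty$ and $\gamma_{\zeta_0,\zeta}(E)=e^{-\inf_\nu I_{\zeta_0,\zeta}(\nu)}>0$, i.e.\ $E$ has positive capacity.

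The argument is essentially immediate once one has the three-variable kernel in hand; the only point requiring care is correctly reducing $g_{\zeta_0}(\zeta,y_0,y_0)$ to the two-variable potential kernel and citing the appropriate finiteness statement from Proposition~\ref{G1} for points of type II/III versus type IV. I do not expect a genuine obstacle here.
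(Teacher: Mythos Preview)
Your proof is correct and follows essentially the same approach as the paper: both take $\nu=\delta_{y_0}$ for a point $y_0\in\HM(E)$, evaluate the energy integral as $g_{\zeta_0}(y_0,y_0)-2g_{\zeta_0}(y_0,\zeta)$, and observe that this is finite precisely because $y_0\notin X(K)$. The paper is more terse, simply writing ``$<\infty$ due to $z\in\HM(\Xan)$'', whereas you spell out the finiteness of each term via Lemma~\ref{g}(iv) and Proposition~\ref{G1}; but the argument is the same.
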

\begin{proof} Choose a point $\zeta\in \Xan \backslash E$ and assume there is a point $z\in \HM(\Xan)\cap E$. Then the Dirac measure $\nu:=\delta_z$ is a probability measure supported on~$E$ and 
	$$I_{\zeta_0,\zeta}(\nu)=\int\int g_{\zeta_0}(\zeta,x,y)~d\nu(x)d\nu(y)=g_{\zeta_0}(z,z)-2g_{\zeta_0}(\zeta,z)<\infty$$ due to $z\in \HM(\Xan)$.
\end{proof}
Note that  $I_{\zeta_0,\zeta_0}(\nu)$ is also well-defined for a probability measure $\nu$ supported on~$\Xan$ with $\zeta_0\in\supp(\nu)$ as 
$$I_{\zeta_0,\zeta_0}(\nu)=\int\int g_{\zeta_0}(\zeta_0,x,y)~d\nu(x)d\nu(y)=\int\int g_{\zeta_0}(x,y)~d\nu(x)d\nu(y)$$ by Corollary~\ref{3Var} and $g_{\zeta_0}$ is lsc on~$\Xan\times\Xan$ by Lemma~\ref{g}.
 
\begin{lem}\label{LC} Let $\zeta$ be a point in $\Xan$, let $E$ be  a subset of $\Xan\backslash \{\zeta\}$ that has capacity zero and let $\nu$ be a  probability measure on~$\Xan$.
If \begin{enumerate}
\item $\supp(\nu)\subset\Xan\backslash \{\zeta\}$ with $I_{\zeta_0,\zeta}(\nu)< \infty$ for some base point $\zeta_0\in I(\Xan)$, or
\item $\zeta\in I(\Xan)$ with $I_{\zeta,\zeta}(\nu)< \infty$,
\end{enumerate}
 then $\nu(E)=0$.
\end{lem}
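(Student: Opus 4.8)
The plan is to argue by contradiction, treating hypotheses (1) and (2) in parallel; the only difference between them lies in where the relevant potential kernel is lower semi-continuous. Suppose $\nu(E)>0$. Since $\nu$ is a finite Radon measure by Proposition~\ref{Radon}, it is inner regular with respect to compact sets, so there is a compact set $E'\subset E$ with $t:=\nu(E')>0$; replacing $E'$ by $E'\cap\supp(\nu)$ (which leaves $\nu(E')$ unchanged, as $\nu(\Xan\setminus\supp(\nu))=0$) we may assume in addition that $E'\subset\supp(\nu)$. Let $\nu':=t^{-1}\,\nu|_{E'}$; this is a probability measure supported on $E'\subset E$.

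The key step is to show that $\nu'$ has finite energy. In case (1), the set $\supp(\nu)$ is a compact subset of $\Xan\setminus\{\zeta\}$, and $g_{\zeta_0}(\zeta,\cdot,\cdot)$ is lower semi-continuous on $\bigl(\Xan\setminus\{\zeta\}\bigr)\times\bigl(\Xan\setminus\{\zeta\}\bigr)$ by Lemma~\ref{g} and Proposition~\ref{G1} (as recalled in Definition~\ref{DefCap}), hence it is bounded below on $\supp(\nu)\times\supp(\nu)$ by some $M\in\R$. Since $E'\times E'\subset\supp(\nu)\times\supp(\nu)$, the function $g_{\zeta_0}(\zeta,x,y)-M$ is non-negative there, so
$$t^2 I_{\zeta_0,\zeta}(\nu')=\int_{E'}\!\int_{E'} g_{\zeta_0}(\zeta,x,y)\,d\nu(x)\,d\nu(y)=\int_{E'}\!\int_{E'}\!\bigl(g_{\zeta_0}(\zeta,x,y)-M\bigr)\,d\nu\,d\nu+Mt^2\le I_{\zeta_0,\zeta}(\nu)-M+Mt^2<\infty,$$
where the inequality uses that the non-negative integrand over $E'\times E'$ is dominated by its integral over $\supp(\nu)\times\supp(\nu)$. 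In case (2), the function $g_\zeta$ is lower semi-continuous on the compact space $\Xan\times\Xan$ by Lemma~\ref{g}~iv), hence bounded below by some $M\in\R$, and using that $I_{\zeta,\zeta}(\nu')=\int\!\int g_\zeta(x,y)\,d\nu'\,d\nu'$ by Corollary~\ref{3Var} together with the remark preceding this lemma, the same computation yields $t^2 I_{\zeta,\zeta}(\nu')\le I_{\zeta,\zeta}(\nu)-M+Mt^2<\infty$.

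Finally I would derive the contradiction. In case (1), $\nu'$ is a probability measure supported on $E$, $\zeta_0\in I(\Xan)$, and $\zeta\in\Xan\setminus E$, so $(\zeta_0,\zeta)$ is an admissible pair in Definition~\ref{DefCap}; since $E$ has capacity zero we have $\gamma_{\zeta_0,\zeta}(E)=0$, which forces $I_{\zeta_0,\zeta}(\mu')=\infty$ for \emph{every} probability measure $\mu'$ supported on $E$, contradicting the bound just obtained for $\nu'$. In case (2), $\zeta\in I(\Xan)$ and $\zeta\in\Xan\setminus E$, so the pair $(\zeta,\zeta)$ is likewise admissible, and capacity zero forces $I_{\zeta,\zeta}(\nu')=\infty$, again a contradiction. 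Hence $\nu(E)=0$. The one point that needs care is the lower-boundedness step: the compact set on which a lower bound for the potential kernel is needed must avoid the singular point $\zeta$, which is precisely why hypothesis (1) demands $\supp(\nu)\subset\Xan\setminus\{\zeta\}$ and why in case (2) one must invoke the global lower semi-continuity of $g_\zeta$ on $\Xan\times\Xan$ rather than only its semi-continuity off $\zeta$; everything else is routine measure theory.
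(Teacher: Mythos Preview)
Your proof is correct and follows essentially the same route as the paper's: argue by contradiction, use inner regularity of the Radon measure $\nu$ to extract a compact $E'\subset E$ of positive mass, normalize $\nu|_{E'}$ to a probability measure, use lower semi-continuity of the kernel on the relevant compact set to bound it below, and then compare the energy over $E'\times E'$ with the finite total energy to contradict capacity zero. The paper's computation is arranged identically (with your $M$ called $C$ there), and your explicit justification of the lower bound in each of the two cases matches the paper's treatment.
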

\begin{proof} The proof is analogous to  \cite[Lemma 6.16]{BR}. 
	Note that $g_{\zeta_0}(\cdot,\zeta)$ is continuous on the compact set $\supp(\nu)$ and $g_{\zeta_0}(x,y)$ as a function of two variables is lsc on~$\supp(\nu)\times \supp(\nu)$ (Proposition~\ref{G1} and Lemma~\ref{g}).
	Hence the extended potential kernel $ g_{\zeta_0}(\zeta,x,y)=g_{\zeta_0}(x,y)-g_{\zeta_0}(x,\zeta)-g_{\zeta_0}(y,\zeta)$ is bounded from below on~$\supp(\nu)\times \supp(\nu)$  by a constant if i) is satisfied.
If $\zeta\in I(\Xan)$, then the function $ g_{\zeta}(\zeta,x,y)= g_{\zeta}(x,y)$ (cf.~Corollary~\ref{3Var}) is lsc on~$\Xan\times\Xan$ by Lemma~\ref{g}, and so also bounded from below on~$\supp(\nu)$.
In both cases let  $C$ be this constant.
	If $\nu(E)>0$, then there is a compact subset $e$ of $E$ such that $\nu(e)>0$. 
	Consider the probability measure $\omega:=(1/\nu(e))\cdot \nu|_e$ on~$e$.
	Then 
	\begin{align*}
	I_{\zeta_0,\zeta}(\omega)&=\int\int g_{\zeta_0}(\zeta,x,y)~d\omega(x)d\omega(y)\\
	&=\int\int(g_{\zeta_0}(\zeta,x,y)-C)~d\omega(x)d\omega(y) +\int\int C ~d\omega(x)d\omega(y)\\
	&\leq \frac{1}{\nu(e)^2}\cdot \int\int (g_{\zeta_0}(\zeta,x,y)-C)~d\nu(x)d\nu(y) + C\\
	&=\frac{1}{\nu(e)^2}\cdot \int\int g_{\zeta_0}(\zeta,x,y)~d\nu(x)d\nu(y) - \frac{1}{\nu(e)^2}\cdot \int\int C~d\nu(x)d\nu(y)+C	\\
	&= \frac{1}{\nu(e)^2}\cdot I_{\zeta_0,\zeta}(\nu) - \frac{\nu(E)^2}{\nu(e)^2}\cdot C+C	< \infty
	\end{align*}
	contradicting that $E$ has capacity zero. Note that in case ii) we have  $\zeta_0=\zeta$ in the calculation.
\end{proof}

\begin{kor}\label{CC} Let $\zeta$ be a point in $\Xan$ and let $E_n$ be a countable collection of Borel sets in $\Xan\backslash \{\zeta\}$ such that $E_n$ has capacity zero for every $n\in \N$. Then the set $E:=\bigcup_{n\in \N}E_n$ has capacity zero.
\end{kor}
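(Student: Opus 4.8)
The plan is a short proof by contradiction that reduces everything to a single application of Lemma~\ref{LC} together with countable subadditivity of measures. Suppose, for contradiction, that $E=\bigcup_{n\in\N}E_n$ had positive capacity. Since each $E_n$ is contained in $\Xan\backslash\{\zeta\}$, so is $E$; in particular $E$ is a proper Borel subset of $\Xan$ (it is a countable union of Borel sets and misses $\zeta$), so the notion of positive capacity from Definition~\ref{DefCap} indeed applies. By that definition there are then a base point $\zeta_0\in I(\Xan)$, a point $\xi\in\Xan\backslash E$, and a probability measure $\nu$ supported on $E$ with $I_{\zeta_0,\xi}(\nu)<\infty$.

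Next I would apply Lemma~\ref{LC}~i) to each of the sets $E_n$, using $\xi$ as the excluded point. This is legitimate: since $\xi\notin E$ we have $E_n\subset E\subset\Xan\backslash\{\xi\}$ and also $\supp(\nu)\subset E\subset\Xan\backslash\{\xi\}$; each $E_n$ has capacity zero by hypothesis; and $I_{\zeta_0,\xi}(\nu)<\infty$. Lemma~\ref{LC}~i) then gives $\nu(E_n)=0$ for every $n\in\N$.

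Finally, countable subadditivity of $\nu$ yields $\nu(E)\le\sum_{n\in\N}\nu(E_n)=0$. On the other hand $\supp(\nu)$ is closed with $\nu(\Xan\backslash\supp(\nu))=0$, and $\supp(\nu)\subset E$, so $\nu(E)=1$. This contradiction forces $E$ to have capacity zero. There is essentially no obstacle in this argument; the one point to watch is that the set and the measure fed into Lemma~\ref{LC} must be anchored at the \emph{same} excluded point, which here is automatic because the witness point $\xi$ lies outside $E$ and hence outside every $E_n$. (Alternatively, one could keep working with the fixed point $\zeta$ from the statement and invoke Remark~\ref{CapP} to replace $\xi$ by $\zeta$ in the finiteness of the energy integral before applying Lemma~\ref{LC}.)
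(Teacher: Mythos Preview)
Your proof is correct and follows essentially the same route as the paper: assume positive capacity, extract a probability measure $\nu$ supported on $E$ with finite energy relative to some point outside $E$, and combine countable subadditivity with Lemma~\ref{LC} to contradict $\nu(E)=1$. The only cosmetic difference is that the paper phrases the contradiction as ``some $E_n$ must have $\nu(E_n)>0$, contradicting Lemma~\ref{LC}'', whereas you apply Lemma~\ref{LC} first to force all $\nu(E_n)=0$; your use of a separate symbol $\xi$ for the witness point is in fact cleaner than the paper's overloading of $\zeta$.
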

\begin{proof} 
	Assume $E$ has positive capacity, i.e.~there is a $\zeta\in \Xan\backslash E$ and a probability measure $\nu$ supported on~$E$ such that $I_{\zeta_0,\zeta}(\nu)< \infty$.
	The set $E$ is measurable since all $E_n$ are, and $\sum_{n\in \N} \nu(E_n)\geq \nu(E)=1.$
	Thus there has to be an $E_n$ such that $\nu(E_n)>0$ contradicting Lemma~\ref{LC}.
\end{proof}

\begin{bem}
	Thuillier introduced in \cite[\S 3.6.1]{Th} relative capacity in an open subset $\Omega$ of $\Xan$ with a non-empty boundary  $\partial \Omega \subset I(\Xan)$.
	The capacity of a compact subset $E$ of $\Omega$ is then defined as 
	$$C(E,\Omega)^{-1}:=\left(\inf_{\nu}\int_E\int_E -g_x(y)~d\nu(x)d\nu(y)\right)\in [0,\infty]$$ where $\nu$ runs over all probability measures supported on~$E$.
	Here $g_x\colon \Omega\to [-\infty,0)$ for $x\in \Omega$ is the unique subharmonic function on~$\Omega$ such that
	\begin{enumerate}
		\item  $dd^c g_x =\delta_x$, and 
		\item   $\lim_{y\in \Omega,~y\to \zeta}g_x(y)=0$
	\end{enumerate} for every $\zeta\in \partial \Omega$ (see \cite[Lemma 3.4.14]{Th}). 
	This notion of relative capacity can be extended canonically to all subsets of $\Omega$ by 
	 $$C(E,\Omega):= \sup_{E'\subset E \text{ compact }} C(E',\Omega).$$
	 
One can show that when $\partial \Omega=\{\zeta\}\subset I(\Xan)$, a subset $E$ of $\Omega$ has positive capacity (as defined in Definition~\ref{DefCap}) if and only if $C(E,\Omega)>0$ (cf.~\cite[Proposition 3.2.19]{Wa}).
\end{bem}

\section{Potential functions}\label{Potentialfunctions}
With the help of the potential kernel from Section~\ref{Potentialkernel}, one can introduce potential functions on~$\Xan$ attached to a finite signed Borel measure.
Baker and Rumely defined these functions on the Berkovich projective line  $\mathbb{P}^{1,\an}$ in \cite[\S 6.3]{BR}.
For the generalization to $\Xan$, we have to fix a type II or III point $\zeta_0$ serving as a base point as the Gauss point does for $\mathbb{P}^{1,\an}$.
We define potential functions with respect to this base point and use them to define Arakelov--Green's functions in Section~\ref{Arakelov}.
Later in Lemma~\ref{L-ind}, we see that the definition of the Arakelov--Green's functions is independent of this choice.

\begin{defn} Let $\zeta_0$ be  a chosen base point in $I(\Xan)$ and  let $\nu$ be any finite signed Borel measure on~$\Xan$.
	For every $\zeta\in I(\Xan)$ or $\zeta\notin \supp(\nu)$, we define the corresponding  \emph{potential function} as   $$u_{\zeta_0,\nu}(x,\zeta):= \int_{\Xan} g_{\zeta_0}(\zeta,x,y)~d\nu(y)$$ for every $x\in \Xan$.
Here $g_{\zeta_0}(\zeta,x,y)$ is the potential kernel defined in Definition~\ref{Kernel}.
\end{defn}

\begin{lem}
Let $\zeta_0$ be  a chosen base point in $I(\Xan)$ and  let $\nu$ be any finite signed Borel measure on~$\Xan$.
	For every $\zeta\in I(\Xan)$ or $\zeta\notin \supp(\nu)$, the function  $u_{\zeta_0,\nu}(\cdot,\zeta)$ is well-defined on~$\Xan$ with values in $\R\cup \{\pm\infty\}$ and we can write 
\begin{align}\label{C}
u_{\zeta_0,\nu}(\cdot,\zeta) 
	& = \int g_{\zeta_0}(\cdot,y)~d\nu(y)-\nu(\Xan)g_{\zeta_0}(\cdot,\zeta)+ C_{\zeta_0,\zeta}
\end{align}
on $\Xan$ for a finite constant $C_{\zeta_0,\zeta}$. 

\end{lem}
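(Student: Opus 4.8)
The plan is to unravel the definition of $u_{\zeta_0,\nu}(\cdot,\zeta)$ by substituting the defining formula for the three-variable potential kernel and integrating term by term. Recall from Definition~\ref{Kernel} that for $x\neq y$ or $x\notin X(K)$ we have $g_{\zeta_0}(\zeta,x,y)=g_{\zeta_0}(x,y)-g_{\zeta_0}(x,\zeta)-g_{\zeta_0}(y,\zeta)$, and the exceptional locus where this fails is a single point (namely $x=y=\zeta\in X(K)$), which is a $\nu$-null situation once we integrate in $y$ for fixed $x$ — more precisely, for fixed $x$ the set of $y$ with $g_{\zeta_0}(\zeta,x,y)\neq g_{\zeta_0}(x,y)-g_{\zeta_0}(x,\zeta)-g_{\zeta_0}(y,\zeta)$ is at most $\{x\}$ when $x\in X(K)$, and a single point carries no mass issue provided we argue the integrand is still measurable and the value there does not affect the integral. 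So formally
\begin{align*}
u_{\zeta_0,\nu}(x,\zeta)=\int g_{\zeta_0}(x,y)\,d\nu(y)-g_{\zeta_0}(x,\zeta)\,\nu(\Xan)-\int g_{\zeta_0}(y,\zeta)\,d\nu(y),
\end{align*}
and we set $C_{\zeta_0,\zeta}:=-\int g_{\zeta_0}(y,\zeta)\,d\nu(y)$, giving exactly \eqref{C}.

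The substantive points to verify are well-definedness and finiteness of the two integrals that do not depend on $x$, and that $C_{\zeta_0,\zeta}$ is a genuine finite constant. For the term $\int g_{\zeta_0}(y,\zeta)\,d\nu(y)$: if $\zeta\notin\supp(\nu)$, then by Proposition~\ref{G1} the function $g_{\zeta_0}(\cdot,\zeta)$ is continuous and finitely valued on a neighborhood of the compact set $\supp(\nu)$ (it is continuous off the diagonal, and $\zeta$ lies off $\supp(\nu)$), hence the integral of this bounded continuous function against the finite signed measure $\nu$ is a finite real number. If instead $\zeta\in I(\Xan)$, then by Proposition~\ref{G1}(i) the function $g_{\zeta_0}(\cdot,\zeta)=g_{\zeta,\zeta_0}$ lies in $A^0(\Xan)$, so in particular it is continuous and finitely valued on all of $\Xan$, and again its $\nu$-integral is finite. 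For the $x$-dependent term $\int g_{\zeta_0}(x,y)\,d\nu(y)$: since $g_{\zeta_0}$ is lower semi-continuous on $\Xan\times\Xan$ with values in $(-\infty,\infty]$ (Lemma~\ref{g}(iv)) and bounded below on the compact set $\{x\}\times\supp(\nu)$ — indeed $g_{\zeta_0}\geq 0$ by Lemma~\ref{g}(i) — the Lebesgue integral against $|\nu|$ is well-defined in $[0,\infty]$, and hence $u_{\zeta_0,\nu}(x,\zeta)$ takes values in $\R\cup\{\pm\infty\}$ as claimed; the only way it can be infinite is through this first term, precisely when $x\in X(K)$ and $\nu$ charges $x$.

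The main obstacle, such as it is, is the bookkeeping at the diagonal point $x=y=\zeta\in X(K)$: one must check that replacing the integrand $g_{\zeta_0}(\zeta,x,y)$ by the algebraic expression $g_{\zeta_0}(x,y)-g_{\zeta_0}(x,\zeta)-g_{\zeta_0}(y,\zeta)$ does not change the value of $\int\cdots d\nu(y)$ for fixed $x$. This is handled by observing that the two expressions agree on $\Xan\setminus\{x\}$ whenever $x\in X(K)$ (and everywhere otherwise), and that changing an integrand on a single point does not affect its Lebesgue integral with respect to any measure; measurability of $y\mapsto g_{\zeta_0}(\zeta,x,y)$ follows because it agrees $\nu$-a.e. with the lower semi-continuous, hence Borel, function $y\mapsto g_{\zeta_0}(x,y)-g_{\zeta_0}(x,\zeta)-g_{\zeta_0}(y,\zeta)$. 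One should also note that when $\zeta=\zeta_0$ the formula is consistent with the convention $g_{\zeta_0}(\zeta_0,x,y)=g_{\zeta_0}(x,y)$ recorded after Lemma~\ref{CapII}, since then $g_{\zeta_0}(\cdot,\zeta_0)=g_{\zeta_0,\zeta_0}\equiv 0$ makes the two subtracted terms vanish and $C_{\zeta_0,\zeta_0}=0$. This completes the plan.
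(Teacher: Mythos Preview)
Your approach is the same as the paper's: expand $g_{\zeta_0}(\zeta,x,y)$ via Definition~\ref{Kernel}, integrate term by term, and identify $C_{\zeta_0,\zeta}=-\int g_{\zeta_0}(y,\zeta)\,d\nu(y)$, checking this is finite via Proposition~\ref{G1}. That part is fine.

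However, your well-definedness argument has a gap. You assert that ``the only way it can be infinite is through this first term, precisely when $x\in X(K)$ and $\nu$ charges $x$.'' This misses the case $x=\zeta\in X(K)$ (which is allowed when $\zeta\notin\supp(\nu)$): there the middle term $\nu(\Xan)\,g_{\zeta_0}(\zeta,\zeta)$ is infinite while the \emph{first} term $\int g_{\zeta_0}(\zeta,y)\,d\nu(y)$ is finite (since $g_{\zeta_0}(\zeta,\cdot)$ is continuous on the compact set $\supp(\nu)\not\ni\zeta$). The paper handles well-definedness by an explicit case split --- $\zeta\in I(\Xan)$; $\zeta\notin\supp(\nu)$ with $x\neq\zeta$; and $x=\zeta\notin\supp(\nu)$ --- showing in each case that at most one of the three summands is infinite. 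Your single-sentence treatment does not cover the third case, and the remark about the integral against $|\nu|$ being in $[0,\infty]$ does not by itself prevent an internal $\infty-\infty$ for signed $\nu$.

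Two smaller points: the sentence ``changing an integrand on a single point does not affect its Lebesgue integral with respect to any measure'' is false in general (atoms matter); it works here only because $\zeta\notin\supp(\nu)$ forces $\nu(\{\zeta\})=0$, which you should say. Also, the entire diagonal bookkeeping paragraph is unnecessary under the lemma's hypotheses: if $\zeta\in I(\Xan)$ the exceptional point $x=y=\zeta\in X(K)$ cannot occur, and if $\zeta\notin\supp(\nu)$ the point $y=\zeta$ carries no $\nu$-mass anyway.
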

\begin{proof}
	By the definition of the potential kernel $g_{\zeta_0}(\zeta,x,y)$, we get for every $x\in \Xan$
	\begin{align*}
	u_{\zeta_0,\nu}(x,\zeta) 
	& = \int g_{\zeta_0}(x,y)~d\nu(y)-\nu(\Xan)g_{\zeta_0}(x,\zeta)- \int g_{\zeta_0}(y,\zeta)~d\nu(y).
	\end{align*}
Since $g_{\zeta_0}(\cdot,\zeta)$ is continuous on the compact subset $\supp(\nu)$ if $\zeta\in I(\Xan)$ or $\zeta\notin \supp(\nu)$ (cf.~Proposition~\ref{G1}), the last term is always a finite constant, and so we get the description in (\ref{C}) with $C_{\zeta_0,\zeta}:=- \int g_{\zeta_0}(y,\zeta)~d\nu(y)$.

To prove that $u_{\zeta_0,\nu}(\cdot,\zeta)$  is well-defined, we have to show that $\infty-\infty$ or $-\infty+\infty$ cannot occur. 
	
If $\zeta\in I(\Xan)$, then $g_{\zeta_0}(x,\zeta)$ is finite for every $x\in \Xan$, and so $u_{\zeta_0,\nu}(\cdot,\zeta)$  is well-defined.
	
Next, we consider $\zeta\notin \supp(\nu)$. For every $x\neq \zeta$, we know that $g_{\zeta_0}(x,\zeta)$ is finite as well, and so 
	$\infty-\infty$ or $-\infty+\infty$ cannot occur.
	It remains to show that the function is well-defined in
	 $x=\zeta \notin \supp(\nu)$. Since  $g_{\zeta_0}(x,\cdot)$ is continuous on the compact subset $\supp(\nu)$ as $x\notin\supp(\nu)$,
    the first term $\int g_{\zeta_0}(x,y)~d\nu(y)$ is finite, and so $u_{\zeta_0,\nu}(x,\zeta) $ is well-defined in $x=\zeta$.
\end{proof}
\begin{bem} \label{Bem u} Let $\zeta_0'$ be another chosen base point in  $I(\Xan)$. Then  Lemma~\ref{g} implies that
for every  $\zeta\in I(\Xan)$ or $\zeta\notin \supp(\nu)$ and $x\in \Xan$ we have
	\begin{align*}
	u_{\zeta_0,\nu}(x,\zeta)= u_{\zeta'_0,\nu}(x,\zeta)+2\nu(\Xan)g_{\zeta'_0}(\zeta,\zeta_0)- \nu(\Xan)g_{\zeta'_0}(\zeta_0,\zeta_0),
	\end{align*} i.e.~the corresponding potential function differ by a constant depending on~$\zeta'_0,\zeta_0$ and $\zeta$.
\end{bem}
\begin{lem}\label{u0}
	Let $\zeta_0$ be  a chosen base point in $I(\Xan)$ and  let $\nu$ be any finite signed Borel measure on~$\Xan$.
	For every skeleton $\Gamma$ of $\Xan$ or every  path $\Gamma=[z,\omega]\subset \HM(\Xan)$, and for every  $\zeta\in I(\Xan)$ or $\zeta\notin \supp(\nu)$, the restriction of $u_{\zeta_0,\nu}(\cdot,\zeta)$ to $\Gamma$ is finite and continuous.
\end{lem}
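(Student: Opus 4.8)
\textbf{Proof plan for Lemma~\ref{u0}.}
The plan is to reduce everything to the decomposition~\eqref{C}, which writes $u_{\zeta_0,\nu}(\cdot,\zeta)$ on all of $\Xan$ as
$$u_{\zeta_0,\nu}(\cdot,\zeta) = \int g_{\zeta_0}(\cdot,y)~d\nu(y)-\nu(\Xan)g_{\zeta_0}(\cdot,\zeta)+ C_{\zeta_0,\zeta},$$
with $C_{\zeta_0,\zeta}$ a finite constant. The second and third summands are harmless: by Proposition~\ref{G1} the function $g_{\zeta_0}(\cdot,\zeta)$ is continuous on $\Xan$ (finite and continuous when $\zeta\in I(\Xan)$ or $\zeta$ is of type~IV, and still continuous into the half-open interval topology when $\zeta\in X(K)$; but in the latter case $\zeta\notin\supp(\nu)$ is excluded since we only need $\Gamma\subset\HM(\Xan)$ or $\zeta\in I(\Xan)$, and in fact if $\zeta\notin\supp(\nu)$ we may as well have $\zeta$ arbitrary but then $\zeta\notin\Gamma$ for the path case — I will handle this by noting $g_{\zeta_0}(\cdot,\zeta)$ restricted to $\Gamma$ is finite and continuous because $\Gamma\subset\HM(\Xan)$ or, if $\Gamma$ is a skeleton, the only possible singularity is at $\zeta$ itself which lies in $\HM(\Xan)$ when $\zeta\in I(\Xan)$). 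So the whole problem collapses to showing:

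\emph{Claim.} For any finite signed Borel measure $\nu$ and any skeleton $\Gamma$ (or path $\Gamma=[z,\omega]\subset\HM(\Xan)$), the function $x\mapsto\int g_{\zeta_0}(x,y)~d\nu(y)$ is finite and continuous on $\Gamma$.

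First I would split $\nu=\nu^+-\nu^-$ into positive and negative parts, so it suffices to treat a positive finite measure $\nu$; the integrand $g_{\zeta_0}(x,y)\ge 0$ is then non-negative. Fix $x_0\in\Gamma$. Since $\Gamma\subset I(\Xan)$ (for a skeleton) or $\Gamma\subset\HM(\Xan)$ (for a path), in particular $x_0\in\HM(\Xan)$, so $g_{\zeta_0}(x_0,\cdot)$ is finite everywhere and continuous on the compact set $\Xan$ by Proposition~\ref{G1} and Lemma~\ref{g}; hence $\int g_{\zeta_0}(x_0,y)~d\nu(y)<\infty$, giving finiteness. For continuity at $x_0$, the key structural input is the explicit formula defining $g_{\zeta_0}(x,y)$: for $x$ ranging over a small neighborhood in $\Gamma$ and $y$ fixed, $g_{\zeta_0}(x,y)$ moves continuously, and more importantly it is \emph{uniformly bounded} — choosing a larger skeleton $\Gamma'\supset\Gamma$ with $\zeta_0\in\Gamma'$, the values $g_{\zeta_0}(\tau_{\Gamma'}(x),\tau_{\Gamma'}(y))_{\Gamma'}$ are bounded by the diameter-type bound in Lemma~\ref{ggraph}, and the extra $\rho$-term only appears when $\tau_{\Gamma'}(x)=\tau_{\Gamma'}(y)$, i.e.\ $y$ lies in the ball over $\tau_{\Gamma'}(x)$. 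This lets me invoke dominated convergence: for a sequence $x_n\to x_0$ in $\Gamma$, the functions $y\mapsto g_{\zeta_0}(x_n,y)$ are dominated by a fixed $\nu$-integrable function (a constant, since $g_{\zeta_0}$ is bounded on $\Gamma\times\Xan$ by Lemma~\ref{ggraph} plus boundedness of $\rho$ on the relevant compact pieces), and they converge pointwise to $g_{\zeta_0}(x_0,y)$ for every $y$ by part~iv) of Lemma~\ref{g} (lower semicontinuity is not quite pointwise continuity, so here I would argue more carefully: off the diagonal $g_{\zeta_0}$ is jointly continuous, and since $x_0\in\HM(\Xan)$ the ``diagonal'' value $g_{\zeta_0}(x_0,x_0)$ is a finite limit — so $g_{\zeta_0}(\cdot,y)$ is continuous at $x_0$ for each fixed $y$, because $x_0\ne y$ unless $y=x_0\in\HM(\Xan)$ where continuity also holds). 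Dominated convergence then yields $\int g_{\zeta_0}(x_n,y)~d\nu(y)\to\int g_{\zeta_0}(x_0,y)~d\nu(y)$.

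The main obstacle is the continuity-of-the-integrand step when $y$ is allowed to collide with $x_0$, i.e.\ making precise that $g_{\zeta_0}(\cdot,y)$ is genuinely continuous (not merely lsc) at each $x_0\in\Gamma\subset\HM(\Xan)$; this is exactly where the hypothesis $\Gamma\subset\HM(\Xan)$ (resp.\ $\Gamma$ a skeleton, so $\Gamma\subset I(\Xan)$) is used, since at a type~I point $g_{\zeta_0}(\cdot,y)$ would blow up. I would settle this by the local description in Proposition~\ref{G1}: near a type~II or III point $x_0$, using a simple open neighborhood $V=\tau_{\Gamma''}^{-1}(\Omega)$, the function $g_{\zeta_0}(x,y)$ for $x\in V\cap\Gamma$ is given by a piecewise-affine expression in $\tau$-coordinates plus a continuous $\rho$-correction, all jointly continuous in $(x,y)$ on $V\cap\Gamma$ times $\Xan$. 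Granting that, the dominated-convergence argument closes the proof; one then transports the conclusion back through~\eqref{C}, noting $g_{\zeta_0}(\cdot,\zeta)|_\Gamma$ and the constant $C_{\zeta_0,\zeta}$ are finite and continuous, to conclude that $u_{\zeta_0,\nu}(\cdot,\zeta)|_\Gamma$ is finite and continuous.
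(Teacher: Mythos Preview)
Your approach is essentially correct and reaches the conclusion, but it differs from the paper's argument in a way worth noting.

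The paper does not use dominated convergence at all. Instead, after reducing to the term $\int g_{\zeta_0}(x,y)\,d\nu(y)$ via~\eqref{C} (as you do), it exploits the \emph{retraction formula} $g_{\zeta_0}(x,y)=g_{\zeta_0}(x,\tau_\Gamma(y))_\Gamma$ for $x\in\Gamma$ (equation~(\ref{Retraction}) in the paper) to push $\nu$ forward to the skeleton:
\[
\int_{\Xan} g_{\zeta_0}(x,y)\,d\nu(y)=\int_\Gamma g_{\zeta_0}(x,t)_\Gamma\,d((\tau_\Gamma)_*\nu)(t),
\]
and then invokes the \emph{joint} continuity of $g_{\zeta_0}(\cdot,\cdot)_\Gamma$ on the compact metric graph $\Gamma$ (Lemma~\ref{ggraph}) to conclude directly. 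For the path $\Sigma=[z,\omega]$ with $z$ of type~IV, the paper uses the same idea with an auxiliary retraction $\tau_\Sigma\colon\overline V\to\Sigma$ and identifies the $\rho$-term with the potential kernel $g_\omega(\cdot,\cdot)_\Sigma$ on the metric graph $\Sigma$. This is shorter and avoids having to justify boundedness and pointwise continuity separately: joint continuity on a compact set gives both at once.

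Your dominated-convergence route works, but two points need tightening. First, the phrase ``choosing a larger skeleton $\Gamma'\supset\Gamma$'' is wrong in the path case: a path ending at a type~IV point is not contained in any skeleton. You should instead take a skeleton $\Gamma'$ containing $\zeta_0$ and $\omega$, and bound the $\rho$-term by $\rho(w_{\Gamma'}(x,y),\tau_{\Gamma'}(x))\le\rho(x,\tau_{\Gamma'}(x))\le\rho(z,\tau_{\Gamma'}(z))<\infty$, which is finite precisely because $z\in\HM(\Xan)$. Second, for pointwise convergence $g_{\zeta_0}(x_n,y)\to g_{\zeta_0}(x_0,y)$ you should appeal directly to Proposition~\ref{G1} (continuity of $G_{\zeta_0,y}$ on $\Xan$ for each fixed $y$) rather than the somewhat circuitous ``off-diagonal joint continuity plus finite diagonal value'' argument. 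With these adjustments your proof is complete; but note that the uniform bound you need on $g_{\zeta_0}|_{\Gamma\times\Xan}$ is, once unpacked, exactly the retraction formula the paper uses---so the two proofs are closer than they first appear.
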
 
\begin{proof}
	First, we consider a skeleton $\Gamma$ of $\Xan$.
	We may assume $\zeta_0\in \Gamma$ by  Remark~\ref{Bem u}.
	Note that the potential kernel satisfies by construction  a retraction formula as in \cite[Proposition 4.5]{BR}, i.e.
\begin{align}\label{Retraction}g_{\zeta_0}(x,y)=g_{\zeta_0}(x,\tau_\Gamma(y))_\Gamma=g_{\zeta_0}(x,\tau_\Gamma(y))\end{align}
 for every $x\in \Gamma$ and $y\in \Xan$. Furthermore, recall the description of $u_{\zeta_0,\nu}(\cdot,\zeta) $ in (\ref{C}).
	
	Then for every  $x\in \Gamma$
	\begin{align*}
	u_{\zeta_0,\nu}(x,\zeta) &=  \int_{\Xan} g_{\zeta_0}(x,y)~d\nu(y)- \nu(\Xan) g_{\zeta_0}(x,\zeta)+ C_{\zeta_0,\zeta}\\
	&= \int_{\Xan} g_{\zeta_0}(x,\tau_{\Gamma}(y))_\Gamma~d\nu(y)-\nu({\Xan})g_{\zeta_0}(x,\tau_{\Gamma}(\zeta))_\Gamma+ C_{\zeta_0,\zeta}\\
	&= \int_{\Gamma} g_{\zeta_0}(x,t)_\Gamma~d((\tau_\Gamma)_*\nu)(t)-\nu({\Xan})g_{\zeta_0}(x,\tau_{\Gamma}(\zeta))_\Gamma+ C_{\zeta_0,\zeta}.
	\end{align*} 
	The first term is finite and continuous by Lemma~\ref{ggraph} and the second one is as well by Lemma~\ref{G1}.
	Hence $u_{\zeta_0,\nu}(\cdot,\zeta) $ is finite and continuous on~$\Gamma$.
	
	In the following, we consider a path $\Sigma:=[z,\omega]$. 
	Recall that $\HM(\Xan)$ is the set of points of type II, III and IV, and every point of type IV has only one tangent direction in $\Xan$ \cite[Lemma 5.12]{BPR2}. 
	We already know that $u_{\zeta_0,\nu}(\cdot,\zeta)$ restricted to every skeleton is finite and continuous.
	Moreover, every path $[z,\omega]$ for $z,\omega\in I(\Xan)$ lies in some skeleton.
	Thus  it remains to consider paths of the form $[z,\tau_\Gamma(z)]$ for a type IV point $z$ and an arbitrary large skeleton $\Gamma$ of $\Xan$. 
	From now on let $\Sigma$ be the considered path $[z,\omega]$ with $\omega:=\tau_\Gamma(z)$.
	
     Let $\zeta_0$ be some base point in $I(\Xan)\cap \Gamma$, which we may choose  that way by  Remark~\ref{Bem u}. 
     Again, we  consider each term of 
	$$u_{\zeta_0,\nu}(x,\zeta) =  \int_{\Xan} g_{\zeta_0}(x,y)~d\nu(y)- \nu({\Xan}) g_{\zeta_0}(x,\zeta)+ C_{\zeta_0,\zeta}$$ 
	 for  $x\in \Sigma$ separately. 
	The second term is finite and continuous in $x$ by Proposition~\ref{G1} (note that $\Sigma\cap X(K)=\emptyset$).

	It remains to consider the first term.
	Let $V$ be the connected component of $\Xan\backslash\Gamma$ containing $z$, which is an open ball with unique boundary point $\omega=\tau_\Gamma(z)$.
	We can consider the canonical retraction map $\tau_\Sigma\colon \overline{V}\to [z,\omega]$, where a point $x\in \overline{V}$ is retracted to $w_\Gamma(x,z)$ (cf.~Remark~\ref{Wegpunkt}).
Note that for $x\in \Sigma$, we have $$ g_{\zeta_0}(x,y)= \begin{cases}g_{\zeta_0}(\omega,\tau_\Gamma(y))_\Gamma & \text{ if } y\notin V,\\
g_{\zeta_0}(\omega,\tau_\Gamma(y))_\Gamma + \rho(w_{\Gamma}(x,y),\omega) & \text{ if } y\in V=\tau_\Sigma^{-1}([z,\omega)).
\end{cases}$$ Hence for  $x\in \Sigma$ the following is true
	\begin{align*}
	\int_{\Xan} g_{\zeta_0}(x,y)~d\nu(y)&= \int_{\Xan}g_{\zeta_0}(\omega,\tau_\Gamma(y))_\Gamma~d\nu(y)+ \int_{\tau_\Sigma^{-1}((\omega,z])}\rho(w_{\Gamma}(x,y),\omega)~d\nu(y)\\
	&=\int_{\Xan}g_{\zeta_0}(\omega,\tau_\Gamma(y))_\Gamma~d\nu(y)+ \int_{\tau_\Sigma^{-1}((\omega,z])}\rho(w_{\Gamma}(x,\tau_\Sigma(y)),\omega)~d\nu(y)\\
	&=\int_{\Xan}g_{\zeta_0}(\omega,\tau_\Gamma(y))_\Gamma~d\nu(y)+  \int_\Sigma \rho(w_{\Gamma}(x,t),\omega)~d((\tau_\Sigma)_*\nu)(t)\\
	&=\int_{\Xan}g_{\zeta_0}(\omega,\tau_\Gamma(y))_\Gamma~d\nu(y)+  \int_\Sigma g_\omega(x,t)_\Sigma~d((\tau_\Sigma)_*\nu)(t).
	\end{align*}
	Note that our path $\Sigma=[z,\omega]\subset \HM(\Xan)$ is a metric graph, and so we can consider the potential kernel  $g_\omega(x,t)_\Sigma$ on~$\Sigma$ from Definition~\ref{pkm}.
	For the last identity we used
    $ \rho(w_{\Gamma}(x,t),\omega)=  \rho(w_{\omega}(x,t),\omega)=g_\omega(x,t)_\Sigma$, which follows by Remark~\ref{VGL} and \cite[\S 4.2 p.~77]{BR}.
	Then Lemma~\ref{ggraph} tells us again that  the second term is finite and continuous. 
	As $g_{\zeta_0}(\omega,\tau_\Gamma(\cdot))_\Gamma=g_{\zeta_0}(\omega,\cdot)$  (see (\ref{Retraction})) is finitely valued and continuous on the compact set $\supp(\nu)$ by Proposition~\ref{G1}, the first one is a finite constant, and hence the claim follows.
\end{proof}

\begin{prop}\label{u1} Let $\zeta_0$ be  a chosen base point in $I(\Xan)$ and let $\nu$ be  a positive Radon measure on~$\Xan$. Then for  every  $\zeta\in I(\Xan)$ or $\zeta\notin \supp(\nu)$ the following are true:
	\begin{enumerate}
		\item If $\zeta\notin X(K)$, then $u_{\zeta_0,\nu}(\cdot,\zeta)$ is finitely valued and continuous on~$\Xan\backslash \supp(\nu)$ and it is  lsc on~$\Xan$.
		\item If $\zeta\in X(K)$, then $u_{\zeta_0,\nu}(\cdot,\zeta)$ is continuous on~$\Xan\backslash (\supp(\nu)\cup \{\zeta\})$  with\\ $u_{\zeta_0,\nu}(x,\zeta)=\infty$ if and only if $x=\zeta$, and it is lsc on~$\Xan\backslash \{\zeta\}$.
		\item For each $z\in \Xan$ and each path $[z,\omega]$, we have 
		\begin{align}\label{2.2}\liminf\limits_{t\to z}u_{\zeta_0,\nu}(t,\zeta)= \liminf\limits_{\substack{t\to z,\\ t\in I(\Xan)}}u_{\zeta_0,\nu}(t,\zeta)=\lim\limits_{\substack{t\to z,\\ t\in [\omega,z)}}u_{\zeta_0,\nu}(t,\zeta)= u_{\zeta_0,\nu}(z,\zeta).\end{align}
	\end{enumerate}
	Note that every probability measure on~$\Xan$  is a  positive Radon measure by Proposition~\ref{Radon}.
\end{prop}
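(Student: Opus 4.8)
The plan is to reduce everything to the decomposition~(\ref{C}), writing
$u_{\zeta_0,\nu}(\cdot,\zeta)=P_\nu(\cdot)-\nu(\Xan)\,g_{\zeta_0}(\cdot,\zeta)+C_{\zeta_0,\zeta}$ with $P_\nu(x):=\int_{\Xan}g_{\zeta_0}(x,y)\,d\nu(y)\ge 0$. The ``pole term'' $-\nu(\Xan)g_{\zeta_0}(\cdot,\zeta)$ is already controlled by Proposition~\ref{G1}: it is finite and continuous on all of $\Xan$ when $\zeta\notin X(K)$, and finite and continuous on $\Xan\setminus\{\zeta\}$ with $g_{\zeta_0}(x,\zeta)\to g_{\zeta_0}(\zeta,\zeta)=\infty$ as $x\to\zeta$ when $\zeta\in X(K)$. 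Thus the only new singularity introduced in case~(ii) compared to case~(i) sits at $\zeta$, a point disjoint from $\supp(\nu)$, and all three assertions follow once I prove the corresponding statements for $P_\nu$, namely: (a) $P_\nu$ is finite and continuous on $\Xan\setminus\supp(\nu)$; (b) $P_\nu$ is lower semicontinuous on~$\Xan$; (c) the identity~(\ref{2.2}) holds with $P_\nu$ in place of $u_{\zeta_0,\nu}(\cdot,\zeta)$. Adding back the continuous pole term then gives (i) and (ii) (and finiteness off $\supp(\nu)$, resp.\ off $\supp(\nu)\cup\{\zeta\}$, together with the infinite value at $\zeta$, come from (a), from $P_\nu\ge 0$, and from Proposition~\ref{G1}), while near $z=\zeta\in X(K)$ the behaviour in~(\ref{2.2}) is read off directly from the pole term.

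For~(a): fix $x_0\in\Xan\setminus\supp(\nu)$. The compact set $\{x_0\}\times\supp(\nu)$ is disjoint from the diagonal, so by Lemma~\ref{g}(iv) and Proposition~\ref{G1} the kernel $g_{\zeta_0}$ is finite and continuous on a neighbourhood of it; shrinking to $\overline V\times\supp(\nu)$ with $\overline V$ a compact neighbourhood of $x_0$ avoiding $\supp(\nu)$, uniform continuity of $g_{\zeta_0}$ on this compact set shows that $x\mapsto P_\nu(x)=\int_{\supp(\nu)}g_{\zeta_0}(x,y)\,d\nu(y)$ is finite and continuous at $x_0$.

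Step~(b) is the crux. For each skeleton $\Gamma$ of $\Xan$ put $Q_\Gamma(x):=\int_{\Xan}g_{\zeta_0}(\tau_\Gamma(x),y)\,d\nu(y)=\int_\Gamma g_{\zeta_0}(\tau_\Gamma(x),t)_\Gamma\,d\big((\tau_\Gamma)_*\nu\big)(t)$, the second equality by the retraction formula~(\ref{Retraction}). By Lemma~\ref{ggraph} the integrand is continuous on the metric graph $\Gamma$, so $Q_\Gamma$ is continuous on $\Xan$. From the three-case definition of $g_{\zeta_0}$, the retraction formula and $\rho\ge 0$ one obtains, for every $y$, the inequality $g_{\zeta_0}(\tau_\Gamma(x),y)\le g_{\zeta_0}(x,y)$ (with equality once $\tau_\Gamma(x)\ne\tau_\Gamma(y)$), that $\Gamma\mapsto g_{\zeta_0}(\tau_\Gamma(x),y)$ is increasing along the directed family of skeleta, and that $\sup_\Gamma g_{\zeta_0}(\tau_\Gamma(x),y)=g_{\zeta_0}(x,y)$ since $\tau_\Gamma(x)\to x$ in $\Xan$ (the at most one diagonal value $y=x$ being handled via $g_{\zeta_0}(\tau_\Gamma(x),x)=g_{\zeta_0}(\tau_\Gamma(x),\tau_\Gamma(x))_\Gamma$). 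Hence $Q_\Gamma\le P_\nu$ and, by monotone convergence against the Radon measure $\nu$ (Proposition~\ref{Radon}), $P_\nu=\sup_\Gamma Q_\Gamma$. A supremum of continuous functions is lower semicontinuous, so $P_\nu$ is lsc on $\Xan$; adding the pole term, which is continuous on $\Xan$ (resp.\ on $\Xan\setminus\{\zeta\}$), yields the lsc claims of~(i) and~(ii).

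For~(c): lower semicontinuity gives $\liminf_{t\to z}u_{\zeta_0,\nu}(t,\zeta)\ge u_{\zeta_0,\nu}(z,\zeta)$, and restricting the $\liminf$ to $t\in I(\Xan)\supseteq[\omega,z)$ can only increase it, so it suffices to prove $\lim_{t\to z,\,t\in[\omega,z)}u_{\zeta_0,\nu}(t,\zeta)=u_{\zeta_0,\nu}(z,\zeta)$. If $z\in\HM(\Xan)$ this is the continuity of $u_{\zeta_0,\nu}(\cdot,\zeta)$ along the path $[z,\omega]$ from Lemma~\ref{u0}; if $z\in X(K)$ one reruns the retraction computation of Lemma~\ref{u0} along $\Sigma=[z,\tau_\Gamma(z)]$, splitting $\supp(\nu)$ according to whether a point lies in the open ball bounded by $\Sigma$, and the case $z=\zeta$ uses that $\zeta$ has a unique tangent direction in $\Xan$. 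The main obstacle is step~(b): semicontinuity at the type~IV and type~I points cannot be obtained from continuity, and the delicate part is the bookkeeping behind $g_{\zeta_0}(\tau_\Gamma(x),y)\uparrow g_{\zeta_0}(x,y)$ — in particular that the directed system of skeleta resolves the one-dimensional tails attached at type~IV points and correctly accounts for a diagonal atom of $\nu$ — which is exactly the content of the three-case formula for $g_{\zeta_0}$, the retraction formula~(\ref{Retraction}), and the estimates of Lemma~\ref{ggraph} and Proposition~\ref{G1}.
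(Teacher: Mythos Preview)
Your argument is correct, and parts (a) and (c) essentially match the paper. The genuine difference is in step~(b), the lower semicontinuity of $P_\nu$. The paper argues abstractly: since $g_{\zeta_0}$ is lsc and bounded below on the compact space $\Xan\times\Xan$, \cite[Proposition~A.3]{BR} gives
\[
P_\nu(x)=\sup\Bigl\{\int_{\Xan}g(x,y)\,d\nu(y)\;\Big|\;g\in\CS^0(\Xan\times\Xan),\ M\le g\le g_{\zeta_0}\Bigr\},
\]
so $P_\nu$ is a supremum of continuous functions and therefore lsc. You instead produce an explicit increasing net of continuous approximants $Q_\Gamma(x)=\int g_{\zeta_0}(\tau_\Gamma(x),y)\,d\nu(y)$ indexed by skeleta containing $\zeta_0$, use the retraction formula and the three-case definition of the kernel to show $Q_\Gamma\uparrow P_\nu$, and conclude lsc from the same ``sup of continuous is lsc'' principle.

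Both routes are short; the paper's is a one-line appeal to a general lemma about lsc kernels on compacta, while yours is tailored to the geometry of $\Xan$ and reuses the retraction machinery already set up for Lemma~\ref{u0}. Your approach has the minor advantage of making the approximating functions concrete and compatible with the rest of the section; the paper's has the advantage of not needing to verify the pointwise convergence $g_{\zeta_0}(\tau_\Gamma(x),y)\to g_{\zeta_0}(x,y)$ at type~IV points and at diagonal type~I points (which you correctly flag and which follows from Proposition~\ref{G1} together with $\tau_\Gamma(x)\to x$, though that last convergence deserves a sentence). For (c), your reduction via lsc plus the path limit is exactly the paper's strategy; note that at $z=\zeta\in X(K)$ the lsc inequality is not available, and the paper closes this case by observing directly that $u_{\zeta_0,\nu}(z,\zeta)=-\infty$, which your remark about the unique tangent direction also accomplishes.
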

\begin{proof}
	Recall from (\ref{C}) that we can write 
	\begin{align*}
	u_{\zeta_0,\nu}(\cdot,\zeta)=\int g_{\zeta_0}(\cdot,y)~d\nu(y)-\nu(\Xan)g_{\zeta_0}(\cdot,\zeta)+C_{\zeta_0,\zeta}.\end{align*}
	Since $g_{\zeta_0}(\cdot,\zeta)$ is finitely valued and continuous on~$\Xan$ if $\zeta\notin X(K)$ and $g_{\zeta_0}(\cdot,\zeta)$ is finitely valued and continuous on~$\Xan\backslash \{\zeta\}$ if $\zeta\in X(K)$ by  Proposition~\ref{G1}, it remains to show the assertions i) and ii) for the function $f(x):=\int g_{\zeta_0}(x,y)~d\nu(y)$ on~$\Xan$.
	As $g_{\zeta_0}$ is finitely valued and continuous off the diagonal by Lemma~\ref{g} and $\supp(\nu)$ is a compact subset, it follows that $f$ is finitely valued and continuous on~$\Xan\backslash \supp(\nu)$. 
	For the lower semi-continuity of $f$ we use techniques from the proof of \cite[Proposition~6.12]{BR}.
	By Lemma~\ref{g}, $g_{\zeta_0}$ is lower semi-continuous on the compact space $\Xan\times\Xan$, and so it is bounded from below by a constant $M$. 
	Using \cite[Proposition A.3]{BR}, we get the identity 
	$$f(x)=\sup \left\lbrace \int_{\Xan}g(x,y) ~d\nu(y) \mid g\in \CS^0(\Xan\times\Xan),~M \leq  g\leq g_{\zeta_0}\right\rbrace$$ on~$\Xan$.
	Due to the compactness of $\Xan$, the integral function $x\mapsto \int_{\Xan}g(x,y) ~d\nu(y)$ is continuous on~$\Xan$ for every $g\in \CS^0(\Xan\times\Xan)$.
	Then \cite[Lemma A.2]{BR} tells us that $f$ has  to be lower semi-continuous on~$\Xan$.

	Thus it remains to prove identity (\ref{2.2}).
	First, we show the last equation \begin{align*}\lim\limits_{t\to z, t\in [\omega,z)}u_{\zeta_0,\nu}(t,\zeta)= u_{\zeta_0,\nu}(z,\zeta).\end{align*}
	If $z\notin X(K)$, then by shrinking our path we may assume $[z,\omega]\subset \HM(\Xan)$,
and so the restriction of $u_{\zeta_0,\nu}(\cdot,\zeta) $ to $[z,\omega]$ is continuous by Lemma~\ref{u0} and the equation is true. 
	If $z\in X(K)$, we may assume that $[z,\omega]$ lies in a connected component of $\Xan\backslash \Gamma$ for a skeleton $\Gamma$ of $\Xan$ with $\zeta_0\in \Gamma$.
    Then $\tau_{\Gamma}(t)=\tau_\Gamma(z)$ for every $t\in (z,\omega]$, and so for every $y\in \Xan$ and  $t\in (z,\omega]$

    $$	g_{\zeta_0}(t,y)=\begin{cases}
    	g_{\zeta_0}(\tau_\Gamma(z),\tau_\Gamma(y))_\Gamma & \text{ if } \tau_\Gamma(z)\neq\tau_\Gamma(y),\\
    	g_{\zeta_0}(\tau_\Gamma(z),\tau_\Gamma(y))_\Gamma + \rho(w_\Gamma(t,y),\tau_\Gamma(z))  & \text{ if } \tau_\Gamma(z)=\tau_\Gamma(y).
    	\end{cases}$$ 
Since $\rho(w_{\Gamma}(t,y),\tau_\Gamma(z))$ increases monotonically as $t$ tends to $z$ along $(z,\omega]$ for every $y\in \Xan$,
the Monotone Convergence Theorem implies as in the proof of \cite[Proposition 6.12]{BR} that the integral function $\int g_{\zeta_0}(t,y)~d\nu(y)$ converges to $\int g_{\zeta_0}(z,y)~d\nu(y)$ as $t$ tends to $z$ along $(z,\omega]$. 
Furthermore, $g_{\zeta_0}(t,\zeta)$ converges to $g_{\zeta_0}(z,\zeta)$ as $t$ tends to $z$ along $(z,\omega]$ by Proposition~\ref{G1}.
At most one of the terms $\int g_{\zeta_0}(z,y)~d\nu(y)$ and $g_{\zeta_0}(z,\zeta)$ is infinite (due to $\zeta\in I(\Xan)$ or $\zeta\notin\supp(\nu)$), so the description stated at the beginning of the proof (or see (\ref{C})) implies 
\begin{align*}\lim\limits_{t\to z, t\in [\omega,z)}u_{\zeta_0,\nu}(t,\zeta)= u_{\zeta_0,\nu}(z,\zeta).\end{align*}

Now, we deduce the rest of (\ref{2.2}) from that.
When $\zeta=z\in X(K)$, we have $$\liminf\limits_{t\to z}u_{\zeta_0,\nu}(t,\zeta)\leq \liminf\limits_{\substack{t\to z,\\ t\in I(\Xan)}}u_{\zeta_0,\nu}(t,\zeta)\leq \lim\limits_{\substack{t\to z,\\ t\in [\omega,z)}}u_{\zeta_0,\nu}(t,\zeta)= u_{\zeta_0,\nu}(z,\zeta)=-\infty,$$
and so clearly (\ref{2.2}) is true.
When $\zeta\notin X(K)$ or $\zeta\neq z$, then $u_{\zeta_0,\nu}( \cdot,\zeta) $ is lsc at $z$ by i) and ii), and so we get 
	$$u_{\zeta_0,\nu}(z,\zeta) \leq  \liminf\limits_{t\to z}u_{\zeta_0,\nu}(t,\zeta) \leq  \liminf\limits_{\substack{t\to z,\\ t\in I(\Xan)}}u_{\zeta_0,\nu}(t,\zeta) \leq  \lim\limits_{\substack{t\to z,\\ t\in [\omega,z)}}u_{\zeta_0,\nu}(t,\zeta) =   u_{\zeta_0,\nu}(z,\zeta) .$$
	Hence we also have equality.
\end{proof}

\begin{prop}\label{u2}Let $\zeta_0$ be  a chosen base point in $I(\Xan)$ and  let $\nu$ be  any finite signed Borel measure on~$\Xan$.
	Then  for every  $\zeta\in I(\Xan)$ or $\zeta\notin \supp(\nu)$, the potential function $u_{\zeta_0,\nu}(\cdot,\zeta)$  defines a current in $D^0(\Xan)$ with 
	$$dd^c u_{\zeta_0,\nu}(\cdot,\zeta)= \nu(\Xan)\delta_{\zeta}-\nu.$$
\end{prop}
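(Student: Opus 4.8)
The plan is to pair $dd^c u_{\zeta_0,\nu}(\cdot,\zeta)$ with an arbitrary test function $\varphi\in A^0_c(\Xan)$ and to reduce the computation, via a harmless interchange of a finite sum with the $\nu$-integral, to the differential equation $dd^c g_{\zeta_0}(\zeta,\cdot,y)=\delta_\zeta-\delta_y$ from Corollary~\ref{3Var}.

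First I would check that $u_{\zeta_0,\nu}(\cdot,\zeta)$ defines a current in $D^0(\Xan)$. Writing $u_{\zeta_0,\nu}(\cdot,\zeta)$ as in (\ref{C}) and invoking Proposition~\ref{G1}, its restriction to $I(\Xan)$ is finite: on a point of type II or III the kernel $g_{\zeta_0}$ has no pole, so $\nu(\Xan)g_{\zeta_0}(\cdot,\zeta)$ is finite there, $C_{\zeta_0,\zeta}$ is a finite constant, and for $x\in I(\Xan)$ the function $g_{\zeta_0}(x,\cdot)$ is continuous, hence bounded, on the compact space $\Xan$, so $\int g_{\zeta_0}(x,y)\,d\nu(y)$ is finite. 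Under the identification $D^0(\Xan)\simeq\Hom(I(\Xan),\R)$ of Proposition~\ref{Hom}, $u_{\zeta_0,\nu}(\cdot,\zeta)$ is therefore a legitimate current.

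Now fix $\varphi\in A^0_c(\Xan)$, so that $dd^c\varphi\in A^1_c(\Xan)$ is a real measure supported on finitely many points $x_1,\dots,x_n\in I(\Xan)$, say $dd^c\varphi=\sum_{i=1}^n c_i\,\delta_{x_i}$. By the definition of the Laplacian on $D^0(\Xan)$,
\begin{align*}
\langle dd^c u_{\zeta_0,\nu}(\cdot,\zeta),\varphi\rangle
=\langle u_{\zeta_0,\nu}(\cdot,\zeta),dd^c\varphi\rangle
=\sum_{i=1}^n c_i\,u_{\zeta_0,\nu}(x_i,\zeta)
=\sum_{i=1}^n c_i\int_{\Xan} g_{\zeta_0}(\zeta,x_i,y)\,d\nu(y).
\end{align*}
Each inner integral is finite because $g_{\zeta_0}(\zeta,x_i,\cdot)=g_{\zeta_0}(x_i,\cdot)-g_{\zeta_0}(x_i,\zeta)-g_{\zeta_0}(\cdot,\zeta)$ is bounded on the compact set $\supp(\nu)$: indeed $g_{\zeta_0}(x_i,\cdot)$ is continuous on $\Xan$ by Proposition~\ref{G1} (and symmetry of $g_{\zeta_0}$), and $g_{\zeta_0}(\cdot,\zeta)$ has no pole on $\supp(\nu)$ precisely by the hypothesis $\zeta\in I(\Xan)$ or $\zeta\notin\supp(\nu)$, again by Proposition~\ref{G1}. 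Hence the finite sum may be pulled inside the integral, and using Corollary~\ref{3Var},
\begin{align*}
\langle dd^c u_{\zeta_0,\nu}(\cdot,\zeta),\varphi\rangle
=\int_{\Xan}\Big(\sum_{i=1}^n c_i\,g_{\zeta_0}(\zeta,x_i,y)\Big)\,d\nu(y)
=\int_{\Xan}\langle dd^c g_{\zeta_0}(\zeta,\cdot,y),\varphi\rangle\,d\nu(y)
=\int_{\Xan}\big(\varphi(\zeta)-\varphi(y)\big)\,d\nu(y).
\end{align*}
The last expression equals $\nu(\Xan)\varphi(\zeta)-\int\varphi\,d\nu=\langle\nu(\Xan)\delta_\zeta-\nu,\varphi\rangle$, and since $\varphi\in A^0_c(\Xan)$ was arbitrary this yields $dd^c u_{\zeta_0,\nu}(\cdot,\zeta)=\nu(\Xan)\delta_\zeta-\nu$.

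The only point requiring genuine care — and the place where the hypothesis on $\zeta$ is used — is the boundedness of $g_{\zeta_0}(\zeta,x_i,\cdot)$ on $\supp(\nu)$, which is what legitimizes moving the finite sum past the $\nu$-integral; everything else is a direct appeal to Corollary~\ref{3Var} and to the duality defining $dd^c$ on currents. If one prefers, the same argument can be run term by term on the decomposition (\ref{C}), using $dd^c g_{\zeta_0}(\cdot,\zeta)=\delta_{\zeta_0}-\delta_\zeta$ from Proposition~\ref{G1} together with $dd^c\!\int g_{\zeta_0}(\cdot,y)\,d\nu(y)=\nu(\Xan)\delta_{\zeta_0}-\nu$; the two contributions of $\delta_{\zeta_0}$ then cancel.
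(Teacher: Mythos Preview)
Your proof is correct and follows essentially the same approach as the paper: both arguments verify that $u_{\zeta_0,\nu}(\cdot,\zeta)\in D^0(\Xan)$ via finiteness on $I(\Xan)$, then pair with a test function $\varphi$ and interchange the finite sum coming from $dd^c\varphi$ with the $\nu$-integral. The only cosmetic difference is that you work directly with the extended kernel $g_{\zeta_0}(\zeta,\cdot,y)$ and Corollary~\ref{3Var}, whereas the paper first decomposes via (\ref{C}) and applies Proposition~\ref{G1} term by term---precisely the alternative you sketch in your final sentence.
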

\begin{proof}
A function on~$\Xan$ defines a current in $D^0(\Xan)$ if and only if its restriction to $ I(\Xan)$ is finite (cf.~Proposition~\ref{Hom}). 
Recall from $(\ref{C})$ that for every $x\in \Xan$	\begin{align*}
	u_{\zeta_0,\nu}(\cdot,\zeta)  =\int g_{\zeta_0}(\cdot,y)~d\nu(y)-\nu(\Xan)g_{\zeta_0}(\cdot,\zeta)+C_{\zeta_0,\zeta}\nonumber.\end{align*}
If we fix $x\in I(\Xan)$, the function $ g_{\zeta_0}(x,\cdot)=g_{\zeta_0}(\cdot,x)$ (symmetry follows by Lemma~\ref{g}) is a finitely valued continuous function on~$\Xan$ by Proposition~\ref{G1} i).
Hence all terms define currents in $D^0(\Xan)$, and so does $u_{\zeta_0,\nu}(\cdot,\zeta)$.
For the first term we also use that $\supp(\nu)$ is compact. 
Furthermore, we know by Proposition~\ref{G1}  that for any fixed $y$ we have 
$ dd^c g_{\zeta_0}(\cdot,y)=\delta_{\zeta_0}-\delta_y$.
	Due to the calculation
	\begin{align*}
		\langle  dd^c \left(\int  g_{\zeta_0}(\cdot,y)~d\nu(y)\right),\varphi\rangle& = \int \langle dd^c  g_{\zeta_0}(\cdot,y),\varphi\rangle ~d\nu(y)\\
		&= \int \left(\int \varphi ~d(\delta_{\zeta_0}-\delta_y)(x)\right)d\nu(y)\\&= \int \varphi ~d(\nu(\Xan)\delta_{\zeta_0}-\nu)(y)
	\end{align*}
	for every $\varphi\in A_c^0(\Xan)$, we obtain 
$$ dd^c \left(\int g_{\zeta_0}(\cdot,y)~d\nu(y)\right) =\nu(\Xan)\delta_{\zeta_0} -\nu.$$
Hence
	$$ dd^c u_{\zeta_0,\nu}(\cdot,\zeta)=   \nu(\Xan)\delta_{\zeta_0}-\nu - \nu(\Xan)(\delta_{\zeta_0}-\delta_\zeta)=\nu(\Xan)\delta_\zeta-\nu. $$
\end{proof}

\section{Arakelov--Green's functions}\label{Arakelov}
Baker and Rumely developed a theory of Arakelov--Green's functions on~$\mathbb{P}^{1,\an}$ in  \cite[\S 8.10]{BR}.
This class of functions  arise naturally in the study of dynamics and can be seen as  a generalization of the potential kernel from Section~\ref{Potentialkernel}. 
Arakelov--Green's functions are characterized by a list of properties which can be found in Definition~\ref{AGlist}.
We generalize Baker and Rumely's definition of an Arakelov--Green's function from $\mathbb{P}^{1,\an}$ to $\Xan$, and show that the characteristic properties are still satisfied.

\begin{defn}\label{AGlist}
A symmetric function $g$ on~$\Xan\times \Xan$ that satisfies the following list of properties for a probability measure $\mu$ on~$\Xan$ is called a \emph{normalized Arakelov--Green's function} on~$\Xan$.
\begin{enumerate}
\item(Semicontinuity) The function $g$  is finite and continuous off the diagonal and strongly lower semi-continuous on the diagonal in the sense that 
$$g(x_0,x_0)= \liminf _{(x,y)\to (x_0,x_0) ,x\neq y}g(x,y).$$
\item (Differential equation) For each fixed $y\in \Xan$ the function  $g(\cdot,y)$ is an element of $D^0(\Xan)$
and  $$ dd^c g(\cdot,y)=\mu - \delta_y.$$
\item(Normalization)
$$ \int\int g(x,y)~d\mu(x)d\mu(y)=0.$$\end{enumerate}
\end{defn}
 The list of properties is an analog of the one in the complex case and can for example also be found in %a similar way in, symmetric , oder ii anders?
 \cite[\S 3.5 (B1)-(B3)]{BR2}.
 
\begin{bem}\label{AGunique} As in the complex case, the list of properties in  Definition~\ref{AGlist} for a probability measure $\mu$ on~$\Xan$ determines a normalized Arakelov--Green's function on~$\Xan$ uniquely.
If $\widetilde{g}$ is another symmetric function on~$\Xan\times\Xan$ satisfying i)-iii), then for a fixed $y\in \Xan$ $$g(\cdot,y)-\widetilde{g}(\cdot,y)=h_y$$ on~$I(\Xan)$ for a harmonic function $h_y$ on~$\Xan$ 
by property ii) and \cite[Lemme 3.3.12]{Th}.
This harmonic function $h_y$ has to be constant on~$\Xan$  by the Maximum Principle (Proposition~\ref{Max}).
Since $I(\Xan)$ is dense in $\Xan$,  the identity holds on all of $\Xan$ by property i).

Thanks to the symmetry of $g$ and $\widetilde{g}$, the constant function $h_y$ is independent of $y$.
The last property iii), implies that this constant has to be zero, i.e.~$g=\widetilde{g}$ on~$\Xan\times\Xan$.
\end{bem}

\begin{defn}\label{Dlog-c}
A probability measure $\mu$ on~$\Xan$  \emph{has continuous potentials} if  each $\zeta\in I(\Xan)$ defines a continuous function
$$\Xan\to \R,~x\mapsto \int_{\Xan} g_{\zeta}(x,y)~d\mu(y).$$
These functions are bounded as $\Xan$ is compact.
\end{defn}
\begin{bem}
	Let $\mu$ be a probability measure on~$\Xan$. If there exists a point $\zeta_0\in I(\Xan)$ such that 
	$\Xan\to \R,~x\mapsto \int_{\Xan} g_{\zeta_0}(x,y)~d\mu(y)$
	defines a continuous function, then	$\mu$ has continuous potentials.
\end{bem}

\begin{Ex}\label{log-c}
Let $\mu$ be a probability measure supported on a skeleton $\Gamma$ of $\Xan$ (e.g.~$\mu=\delta_z$ for some $z\in I(\Xan)$), then $\mu$ has continuous potentials (using the last remark and Lemma~\ref{ggraph}).
\end{Ex}

\begin{defn}\label{D Arakelov} For every probability measure $\mu$ on~$\Xan$ with continuous potentials and a fixed base point $\zeta_0\in I(\Xan)$, we define  $ g_{\zeta_0,\mu}\colon \Xan\times\Xan \to (-\infty,\infty]$ by
$$ g_{\zeta_0,\mu}(x,y):=g_{\zeta_0}(x,y)- \int_{\Xan} g_{\zeta_0}(x,\zeta)~d\mu(\zeta)-\int_{\Xan}g_{\zeta_0}(y,\zeta)~d\mu(\zeta)+C_{\zeta_0},$$  where $C_{\zeta_0}$ is a constant chosen such that $$\int\int  g_{\zeta_0,\mu}(x,y)~d\mu(x)d\mu(y)=0.$$\end{defn}
\begin{bem}
Recall that 
$g_{\zeta_0}(\zeta_0,x,y)= g_{\zeta_0}(x,y)$ (see Definition~\ref{Kernel}) by Corollary~\ref{3Var},
and so the potential function from Section~\ref{Potentialfunctions} can be written as
$$u_{\zeta_0,\mu}(\cdot,\zeta_0)=\int g_{\zeta_0}(\zeta_0,\cdot,\zeta)~ d\mu(\zeta)=\int g_{\zeta_0}(\cdot,\zeta)~ d\mu(\zeta).$$
Hence we have the description
\begin{align}\label{a1}
g_{\zeta_0,\mu}(x,y)= g_{\zeta_0}(x,y)- u_{\zeta_0,\mu}(x,\zeta_0) -u_{\zeta_0,\mu}(y,\zeta_0)+ C_{\zeta_0}
\end{align} on~$\Xan\times \Xan$.
\end{bem}

In the following lemma, we see that this function is independent of the chosen base point, and hence we just write $g_{\mu}$.
\begin{lem}\label{L-ind}
 For every probability measure $\mu$ on~$\Xan$ with continuous potentials, the function $ g_{\zeta_0,\mu}$ is independent of the chosen base point $\zeta_0$.
\end{lem}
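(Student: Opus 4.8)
The plan is to compute how $g_{\zeta_0,\mu}$ changes when one replaces $\zeta_0$ by another base point $\zeta_0' \in I(\Xan)$ and check the two terms cancel. First I would use the three-point identity from Lemma~\ref{g}(iii): for all $x,y \in \Xan$,
\begin{align*}
g_{\zeta_0}(x,y) = g_{\zeta_0'}(x,y) - g_{\zeta_0'}(x,\zeta_0) - g_{\zeta_0'}(y,\zeta_0) + g_{\zeta_0'}(\zeta_0,\zeta_0).
\end{align*}
Strictly speaking this identity is stated for points of $I(\Xan)$, but since both sides, as functions of $(x,y)$, agree with continuous/lsc functions that are determined by their restriction to the dense set $I(\Xan)\times I(\Xan)$ (using Lemma~\ref{g}(iv)), it extends to all of $\Xan\times\Xan$; I would remark on this. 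Integrating this identity against $d\mu(\zeta)$ after substituting $y\mapsto\zeta$ also gives $\int g_{\zeta_0}(x,\zeta)\,d\mu(\zeta) = \int g_{\zeta_0'}(x,\zeta)\,d\mu(\zeta) - g_{\zeta_0'}(x,\zeta_0) - \int g_{\zeta_0'}(\zeta,\zeta_0)\,d\mu(\zeta) + g_{\zeta_0'}(\zeta_0,\zeta_0)$, where all integrals are finite because $\mu$ has continuous potentials and $g_{\zeta_0'}(\cdot,\zeta_0)$ is continuous by Proposition~\ref{G1}.

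Next I would substitute both expansions into the defining formula
\begin{align*}
g_{\zeta_0,\mu}(x,y) = g_{\zeta_0}(x,y) - \int g_{\zeta_0}(x,\zeta)\,d\mu(\zeta) - \int g_{\zeta_0}(y,\zeta)\,d\mu(\zeta) + C_{\zeta_0}
\end{align*}
and collect terms. The $-g_{\zeta_0'}(x,\zeta_0)$ coming from $g_{\zeta_0}(x,y)$ is cancelled by the $+g_{\zeta_0'}(x,\zeta_0)$ produced when expanding $-\int g_{\zeta_0}(x,\zeta)\,d\mu(\zeta)$, and symmetrically in $y$; what survives is
\begin{align*}
g_{\zeta_0,\mu}(x,y) = g_{\zeta_0'}(x,y) - \int g_{\zeta_0'}(x,\zeta)\,d\mu(\zeta) - \int g_{\zeta_0'}(y,\zeta)\,d\mu(\zeta) + C',
\end{align*}
where $C'$ is an explicit finite constant built from $C_{\zeta_0}$, $g_{\zeta_0'}(\zeta_0,\zeta_0)$, and $\int\int g_{\zeta_0'}(\zeta,\zeta_0)\,d\mu(\zeta)$-type quantities, none of which depend on $x$ or $y$. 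Thus $g_{\zeta_0,\mu}$ differs from the analogous expression built with $\zeta_0'$ only by the additive constant $C' - C_{\zeta_0'}$.

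Finally I would pin down that this constant is zero using the normalization. By Definition~\ref{D Arakelov}, $C_{\zeta_0}$ is chosen precisely so that $\int\int g_{\zeta_0,\mu}\,d\mu\,d\mu = 0$, and likewise $C_{\zeta_0'}$ makes $\int\int g_{\zeta_0',\mu}\,d\mu\,d\mu = 0$; since the two functions differ by a constant and both have zero $\mu\times\mu$-integral, that constant vanishes, so $g_{\zeta_0,\mu} = g_{\zeta_0',\mu}$ on $\Xan\times\Xan$. I expect the only mild obstacle to be the bookkeeping in the term-collection step and the need to justify that all the integrals appearing are finite (so that no $\infty-\infty$ arises), which follows throughout from the continuous-potentials hypothesis together with Proposition~\ref{G1} and the fact that $g_{\zeta_0'}(\cdot,\zeta_0)$ is continuous on the compact set $\supp(\mu)$.
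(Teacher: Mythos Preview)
Your proposal is correct and follows essentially the same approach as the paper: both use the three-point identity of Lemma~\ref{g}(iii) to rewrite each term in the defining formula for $g_{\zeta_0,\mu}$ in terms of $g_{\zeta_0'}$, and then verify the remaining constants match. The only cosmetic difference is that the paper first computes $C_{\zeta_0}=\int\int g_{\zeta_0}(x,y)\,d\mu(x)d\mu(y)$ explicitly and transforms it via Lemma~\ref{g}(iii), whereas you leave the constant unspecified and kill it at the end using the normalization condition; your route is arguably slightly cleaner, but the substance is identical. One small remark: the identity in Lemma~\ref{g}(iii) is already stated for all $x,y\in\Xan$ (only $\zeta,\zeta'$ are required to lie in $I(\Xan)$), so your density argument is unnecessary.
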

\begin{proof} First, we determine $C_{\zeta_0}$
\begin{align*}
0=\int\int  g_{\zeta_0,\mu}(x,y)~d\mu(x)d\mu(y)
&=  \int\int g_{\zeta_0}(x,y)~d\mu(x)d\mu(y)-\int\int g_{\zeta_0}(x,\zeta)~d\mu(\zeta)d\mu(x)\\
&~~~-\int \int g_{\zeta_0}(y,\zeta)~d\mu(\zeta)d\mu(y) + C_{\zeta_0}\\
&= -\int\int g_{\zeta_0}(x,y)~d\mu(x)d\mu(y) + C_{\zeta_0}.
\end{align*}
Hence $C_{\zeta_0}=\int\int g_{\zeta_0}(x,y)~d\mu(x)d\mu(y)$. 
Now let $\zeta_0'\in I(\Xan)$. Applying  Lemma~\ref{g} to  $C_{\zeta_0}$, we get 
\begin{align*}
	C_{\zeta_0}&= \int\int g_{\zeta_0}(x,y)~d\mu(x)d\mu(y)\\
	&= \int\int \left( g_{\zeta'_0}(x,y)-g_{\zeta'_0}(x,\zeta_0)-g_{\zeta'_0}(y,\zeta_0)+g_{\zeta'_0}(\zeta_0,\zeta_0)\right)~d\mu(x)d\mu(y)\\
	&= C_{\zeta'_0} - 2 \int g_{\zeta'_0}(x,\zeta_0)~d\mu(x)+g_{\zeta'_0}(\zeta_0,\zeta_0),
\end{align*}
 where $-2\int g_{\zeta'_0}(\zeta,\zeta_0)~d\mu(\zeta)+g_{\zeta'_0}(\zeta_0,\zeta_0) $ is a finite constant as $\mu$ has continuous potentials.

Using  Lemma~\ref{g} also for the other terms of $ g_{\zeta_0,\mu}$, i.e.~for $g_{\zeta_0}(x,y)$, $g_{\zeta_0}(x,\zeta)$ and $g_{\zeta_0}(y,\zeta)$, and plugging in the identity from above, we get
$ g_{\zeta_0,\mu}(x,y)= g_{\zeta'_0,\mu}(x,y).$
\end{proof}
\begin{prop}\label{G}
 Let $\mu$ be a probability measure on~$\Xan$ with continuous potentials. Then as a function of two variables $g_{\mu}\colon \Xan\times \Xan\to (-\infty,\infty]$ is symmetric, finite and continuous off the diagonal,  and strongly lower semi-continuous on the diagonal in the sense that 
$$g_{\mu}(x_0,x_0)= \liminf _{(x,y)\to (x_0,x_0) ,x\neq y}g_{\mu}(x,y),$$
where we understand $\Xan\times \Xan$  set theoretically and endowed with the product topology.
\end{prop}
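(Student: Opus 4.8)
The plan is to read off the three claimed properties---symmetry, finiteness/continuity off the diagonal, and strong lower semi-continuity on the diagonal---directly from the explicit description
\begin{align}\label{a1recall}
g_{\mu}(x,y)= g_{\zeta_0}(x,y)- u_{\zeta_0,\mu}(x,\zeta_0) -u_{\zeta_0,\mu}(y,\zeta_0)+ C_{\zeta_0}
\end{align}
established in \eqref{a1}, using what we already know about the two building blocks on the right-hand side: the potential kernel $g_{\zeta_0}(x,y)$ (Lemma~\ref{g}) and the potential function $u_{\zeta_0,\mu}(\cdot,\zeta_0)$ (Proposition~\ref{u1}, noting $\mu$ has continuous potentials so $u_{\zeta_0,\mu}(\cdot,\zeta_0)=\int g_{\zeta_0}(\cdot,\zeta)\,d\mu(\zeta)$ is a \emph{finite continuous} function on all of $\Xan$). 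Symmetry of $g_\mu$ is immediate from \eqref{a1recall} since $g_{\zeta_0}(x,y)=g_{\zeta_0}(y,x)$ by Lemma~\ref{g}(ii) and the two $u$-terms enter symmetrically.

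For finiteness and continuity off the diagonal: by Lemma~\ref{g}(iv) the function $g_{\zeta_0}$ is finitely valued and continuous on $\{(x,y): x\neq y\}$ (the only place it takes the value $\infty$ is the diagonal of $X(K)$), and the two $u$-terms together with $C_{\zeta_0}$ contribute a bounded continuous function of $(x,y)$ on all of $\Xan\times\Xan$; hence $g_\mu$ is finite and continuous wherever $x\neq y$. The value $\infty$ is attained precisely when $x=y\in X(K)$, inherited from $g_{\zeta_0}$.

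For the strong lower semi-continuity on the diagonal: fix $x_0\in\Xan$. Since the map $(x,y)\mapsto -u_{\zeta_0,\mu}(x,\zeta_0)-u_{\zeta_0,\mu}(y,\zeta_0)+C_{\zeta_0}$ is continuous, it suffices to prove the corresponding statement for $g_{\zeta_0}$ itself, i.e.
$$g_{\zeta_0}(x_0,x_0)=\liminf_{(x,y)\to(x_0,x_0),\,x\neq y}g_{\zeta_0}(x,y).$$
If $x_0\in\HM(\Xan)$ this follows from the full joint lower semi-continuity of $g_{\zeta_0}$ on $\Xan\times\Xan$ (Lemma~\ref{g}(iv)) together with continuity in a neighbourhood of $(x_0,x_0)$ away from the diagonal. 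The substantive case is $x_0\in X(K)$, where $g_{\zeta_0}(x_0,x_0)=\infty$ and we must show $g_{\zeta_0}(x,y)\to\infty$ as $(x,y)\to(x_0,x_0)$ with $x\neq y$. Here I would use the local structure: pick a skeleton $\Gamma$ containing $\zeta_0$; for $x,y$ in the open ball $U$ (connected component of $\Xan\setminus\Gamma$) with boundary point $\omega=\tau_\Gamma(x_0)$, and $x\neq y$, one has from the definition of $g_{\zeta_0}$ either $\tau_\Gamma(x)\neq\tau_\Gamma(y)$ (which can only happen while both are close to $\omega$, forcing $g_{\zeta_0}(x,y)$ close to $g_{\zeta_0}(\omega,\omega)_\Gamma+{}$large) or $\tau_\Gamma(x)=\tau_\Gamma(y)=\omega$ and
$$g_{\zeta_0}(x,y)=g_{\zeta_0}(\omega,\omega)_\Gamma+\rho\big(w_\Gamma(x,y),\omega\big),$$
and since $x,y\to x_0$ with $x\neq y$ the common point $w_\Gamma(x,y)$ on the segment $[\omega,x_0)$ tends to $x_0$, so $\rho(w_\Gamma(x,y),\omega)\to\infty$. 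The main obstacle is exactly this last point: making rigorous that $(x,y)\to(x_0,x_0)$ in the product topology, with $x\neq y$, forces $w_\Gamma(x,y)\to x_0$ along the path, which uses Theorem~\ref{Str} to describe the fundamental system of neighbourhoods of $x_0$ as nested open balls and the fact that any two distinct points both lying deep inside such a ball have their meeting point $w_\Gamma$ also deep inside. Once this is in hand, combining the two cases gives $\liminf_{(x,y)\to(x_0,x_0),x\neq y}g_{\zeta_0}(x,y)=\infty=g_{\zeta_0}(x_0,x_0)$, and adding back the continuous $u$-terms yields the claim for $g_\mu$.
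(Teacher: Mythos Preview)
Your reduction to proving strong lower semi-continuity for the bare potential kernel $g_{\zeta_0}$ is legitimate and in fact cleaner than the paper's approach: the paper works directly with $g_\mu$ and carries out separate explicit computations for $x_0$ of type II/III (choosing $\zeta_0=x_0$ and using non-negativity of $g_{x_0}$ together with positivity of $\mu$) and of type IV (a longer calculation showing $g_\mu(x_0,x_0)\ge g_\mu(x_0,y)$ for $y$ on the path to the skeleton). Your reduction sidesteps all of this. But your execution inverts the difficulty of the two remaining cases.

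For $x_0\in X(K)$ there is nothing to do: lower semi-continuity of $g_{\zeta_0}$ on $\Xan\times\Xan$ (Lemma~\ref{g}(iv)) already gives $\liminf_{(x,y)\to(x_0,x_0),\,x\neq y}\,g_{\zeta_0}(x,y)\ge g_{\zeta_0}(x_0,x_0)=\infty$, hence equality. The paper dispatches this in one sentence; your discussion of $w_\Gamma(x,y)\to x_0$ is unnecessary, and the subcase $\tau_\Gamma(x)\neq\tau_\Gamma(y)$ you mention is vacuous once $x,y$ both lie in the open ball $U$.

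The genuine gap is at $x_0\in\HM(\Xan)$. Lower semi-continuity plus ``continuity in a neighbourhood of $(x_0,x_0)$ away from the diagonal'' does \emph{not} yield the missing inequality $\liminf_{x\neq y}g_{\zeta_0}(x,y)\le g_{\zeta_0}(x_0,x_0)$: continuity on $\{(x,y):x\neq y\}$ says nothing about limits toward a diagonal point. What actually closes the argument is the one-variable continuity of $G_{\zeta_0,x_0}=g_{\zeta_0}(\cdot,x_0)$ at $x_0$, supplied by Proposition~\ref{G1} (parts (i) and (ii)): approaching along the slice $\{(x_0,y):y\neq x_0\}$ gives $g_{\zeta_0}(x_0,y)\to g_{\zeta_0}(x_0,x_0)$, which bounds the full $\liminf$ from above. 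With this one-line correction your route goes through and is shorter than the paper's.
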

\begin{proof} As
$$g_{\mu}(x,y)= g_{\zeta_0}(x,y)-\int g_{\zeta_0}(x,\zeta)~ d\mu(\zeta) -\int g_{\zeta_0}(y,\zeta)~ d\mu(\zeta)+ C_{\zeta_0}$$ for some base point $\zeta_0\in I(\Xan)$ and as we required $\mu$ to has continuous potentials, 
Lemma~\ref{g} implies that $g_{\mu}\colon \Xan\times \Xan\to (-\infty,\infty]$  is symmetric, finite and continuous off the diagonal and lsc on~$\Xan\times\Xan$.
Thus we only need to prove
\begin{align}\label{Ungl}g_{\mu}(x_0,x_0)\geq \liminf _{(x,y)\to (x_0,x_0) ,x\neq y}g_{\mu}(x,y)=\sup_{U\in \US((x_0,x_0))}\inf_{(x,y)\in U\backslash (x_0,x_0)} g_{\mu}(x,y) .\end{align}
Here $\US((x_0,x_0))$ is any basis of open neighborhoods of $(x_0,x_0)$ in $\Xan\times \Xan$ endowed with the product topology.

  In the following, 
let $\Gamma$ be any skeleton of $\Xan$ with $\zeta_0\in \Gamma$.
If $x_0$ is of type I, we have $g_{\mu}(x_0,x_0)=g_{\zeta_0}(x_0,x_0)=\infty$ by the definition of the potential kernel, and so $(\ref{Ungl})$ is obviously true. 

If $x_0$ is of type II or III, we may choose $\zeta_0=x_0$ by Lemma~\ref{L-ind}, and so
$$ g_\mu(x_0,x_0)=g_{x_0}(x_0,x_0)-\int g_{x_0}(x_0,\zeta)~ d\mu(\zeta) -\int g_{x_0}(x_0,\zeta)~ d\mu(\zeta)+ C_{\zeta_0}=C_{\zeta_0}$$
as  $g_{x_0}(x_0,\zeta)=0$ for every $\zeta\in \Xan$ by Lemma~\ref{g}.
On the other hand, every $U$ in $\US((x_0,x_0))$ contains an element of the form $(x_0,y)$ with $y\in \Xan\backslash \{x_0\}$, and  
\begin{align*}
	g_\mu(x_0,y)& =g_{x_0}(x_0,y)-\int g_{x_0}(x_0,\zeta)~ d\mu(\zeta) -\int g_{x_0}(y,\zeta)~ d\mu(\zeta)+ C_{\zeta_0}\\ & 
	= -\int g_{x_0}(y,\zeta)~ d\mu(\zeta)+ C_{\zeta_0} \leq C_{\zeta_0}
\end{align*}
since $\mu$ and $g_{x_0}(y,\cdot)$ are non-negative (see Lemma~\ref{g} i)).
Thus (\ref{Ungl}) has to be true.

For the rest of the proof let $x_0$ be of type IV.
 There is a basis of open neighborhoods of $x_0$ that is contained in the connected component $V$ of $\Xan\backslash \Gamma$ that contains $x_0$ (cf.~Theorem~\ref{Str}).
Consider the corresponding basis of open neighborhoods $\US((x_0,x_0))$ of $(x_0,x_0)$ in $\Xan\times \Xan$ endowed with the product topology. 
In every  $U\in \US((x_0,x_0))$ we consider tuples of the form $(x_0,y)$ where $y$ lies in the interior of the unique path $[x_0,\tau_\Gamma(x_0)]$ (such tuples always exist). 
Then $\tau_\Gamma(y)=\tau_\Gamma(x_0)$ and $w_{\Gamma}(x_0,y)=y$ (recall its definition from Remark~\ref{Wegpunkt}), and so
\begin{align*}
g_{\zeta_0}(x_0,x_0)-g_{\zeta_0}(x_0,y)&= g_{\zeta_0}(\tau_\Gamma(x_0),\tau_\Gamma(x_0))_\Gamma+\rho(w_{\Gamma}(x_0,x_0),\tau_\Gamma(x_0))\\ 
&~~~-(g_{\zeta_0}(\tau_\Gamma(x_0),\tau_\Gamma(y))_\Gamma+\rho(w_{\Gamma}(x_0,y),\tau_\Gamma(y)))\\
&=  \rho(w_{\Gamma}(x_0,x_0),\tau_\Gamma(x_0))-\rho(w_{\Gamma}(x_0,y),\tau_\Gamma(y))\\
&= \rho(x_0,\tau_\Gamma(x_0))-\rho(y,\tau_\Gamma(x_0))\\
&=  \rho(x_0,y).
\end{align*}
Consequently, we get 
\begin{align}\label{Int1}
g_{\mu}(x_0,x_0)- g_{\mu}(x_0,y)& = g_{\zeta_0}(x_0,x_0)-2\int g_{\zeta_0}(x_0,\zeta)~ d\mu(\zeta) + C_{\zeta_0} \nonumber\\&~~ -(g_{\zeta_0}(x_0,y)-\int g_{\zeta_0}(x_0,\zeta)~ d\mu(\zeta)-\int g_{\zeta_0}(y,\zeta)~ d\mu(\zeta) + C_{\zeta_0} )  \nonumber \\
& = g_{\zeta_0}(x_0,x_0)-g_{\zeta_0}(x_0,y)-\int g_{\zeta_0}(x_0,\zeta)~ d\mu(\zeta) +\int g_{\zeta_0}(y,\zeta)~ d\mu(\zeta)\nonumber \\& = 
\rho(x_0,y)+ \int g_{\zeta_0}(y,\zeta)-g_{\zeta_0}(x_0,\zeta)~ d\mu(\zeta).
\end{align}
To prove (\ref{Ungl}), we need to show that (\ref{Int1}) is non-negative. 

Recall that $y$ lies in the interior of the unique path $[x_0,\tau_\Gamma(x_0)]$.
We denote by $V_0$ the connected component of $V\backslash \{y\}$ that contains $x_0$ (note that $V_0$ is an open ball as $x_0$ is of type IV and $V$ is an open ball).
 We will see that $g_{\zeta_0}(y,\zeta)-g_{\zeta_0}(x_0,\zeta)$ in (\ref{Int1}) is zero for every $\zeta\in \Xan\backslash V_0$.
Recall that $V$ is the connected component of $\Xan\backslash \Gamma$ that contains $x_0$. 
Hence $V$ is an open ball with $\partial V=\{\tau_\Gamma(x_0)\}$ and $V_0\subset V$. 
Furthermore, one should have in mind that $\tau_\Gamma(y)=\tau_\Gamma(x_0)$ as $y\in [x_0,\tau_\Gamma(x_0)]$.

If $\zeta\in \Xan \backslash V$, then by the definition of the potential kernel
$$ g_{\zeta_0}(y,\zeta)-g_{\zeta_0}(x_0,\zeta)=g_{\zeta_0}(\tau_\Gamma(y),\tau_\Gamma(\zeta))_\Gamma- g_{\zeta_0}(\tau_\Gamma(x_0),\tau_\Gamma(\zeta))_\Gamma=0.$$ 

If $\zeta\in V\backslash V_0$, then $\tau_\Gamma(\zeta)=\tau_\Gamma(x_0)=\tau_\Gamma(y)$  and $w_{\Gamma}(x_0,\zeta)=w_{\Gamma}(y,\zeta)$, and hence
 \begin{align*}
g_{\zeta_0}(y,\zeta)-g_{\zeta_0}(x_0,\zeta)&=g_{\zeta_0}(\tau_\Gamma(y),\tau_\Gamma(\zeta))_\Gamma+\rho(w_{\Gamma}(y,\zeta),\tau_\Gamma (y))\\ &~~~-( g_{\zeta_0}(\tau_\Gamma(x_0),\tau_\Gamma(\zeta))_\Gamma+\rho(w_{\Gamma}(x_0,\zeta),\tau_\Gamma (x_0)))\\&=0.
\end{align*}
Thus $g_{\zeta_0}(y,\zeta)-g_{\zeta_0}(x_0,\zeta)=0$ for every $\zeta\in \Xan\backslash V_0$.

For every $\zeta\in V_0$ we have $\tau_\Gamma(\zeta)=\tau_\Gamma(x_0)=\tau_\Gamma(y)$, $w_{\Gamma}(y,\zeta)=y$, and $w_{\Gamma}(x_0,\zeta)\in [x_0,y]$.
Hence
\begin{align*}
g_{\zeta_0}(y,\zeta) -g_{\zeta_0}(x_0,\zeta)&=g_{\zeta_0}(\tau_\Gamma(y),\tau_\Gamma(\zeta))_\Gamma+\rho(w_{\Gamma}(y,\zeta),\tau_\Gamma (y))\\ &~~~-( g_{\zeta_0}(\tau_\Gamma(x_0),\tau_\Gamma(\zeta))_\Gamma+\rho(w_{\Gamma}(x_0,\zeta),\tau_\Gamma (x_0)))\\
&= \rho( w_{\Gamma}(y,\zeta),\tau_\Gamma(y)) - \rho( w_{\Gamma}(x_0,\zeta),\tau_\Gamma(x_0))\\
& = \rho( y,\tau_\Gamma(x_0)) - \rho( w_{\Gamma}(x_0,\zeta),\tau_\Gamma(x_0))\\
&= - \rho( w_{\Gamma}(x_0,\zeta),y) 
\end{align*} for every $\zeta\in V_0$.
Plugging everything in (\ref{Int1}), we get
\begin{align*}
g_{\mu}(x_0,x_0)- g_{\mu}(x_0,y)&= \rho(x_0,y)+ \int_{V_0}- \rho( w_{\Gamma}(x_0,\zeta),y)~ d\mu(\zeta)\\& \geq   \int_{V_0}\rho(x_0,y)- \rho( w_{\Gamma}(x_0,\zeta),y)~ d\mu(\zeta)\\& \geq 0
\end{align*}
as $\rho( x_0,y)\geq  \rho( w_{\Gamma}(x_0,\zeta),y)$ on~$V_0$ and $\mu$ is a non-negative measure.

Consequently, $(\ref{Ungl})$ has to be also true for $x_0$ of type IV.
\end{proof}

\begin{prop}\label{G'}
 For every probability measure $\mu$ on~$\Xan$ with continuous potentials and for every fixed $y\in \Xan$, the function $G_{\mu,y}:=g_{\mu}(\cdot,y)\colon \Xan\to 
(-\infty,\infty]$ defines a current in $D^0(\Xan)$ and satisfies 
$$dd^c G_{\mu,y}= \mu -\delta_y.$$
Moreover, $G_{\mu,y}$ is continuous on~$\Xan$ with $G_{\mu,y}(x)=\infty$ if and only if $x=y\in X(K)$. %if $y\in I(X)$, and continuous on~$X\backslash \{y\}$ and lsc on~$X$ if $y\notin I(X)$.
In particular, $G_{\mu,y}$ is subharmonic on~$\Xan\backslash\{y\}$.
\end{prop}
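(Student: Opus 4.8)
\emph{Proof proposal.} The plan is to read off everything from the decomposition
\begin{align*}
G_{\mu,y}(x)=g_\mu(x,y)=g_{\zeta_0}(x,y)-u_{\zeta_0,\mu}(x,\zeta_0)-u_{\zeta_0,\mu}(y,\zeta_0)+C_{\zeta_0}
\end{align*}
from $(\ref{a1})$ for a fixed base point $\zeta_0\in I(\Xan)$, treating the three pieces separately. With $y$ fixed, the quantities $u_{\zeta_0,\mu}(y,\zeta_0)$ and $C_{\zeta_0}$ are finite constants: the former equals $\int g_{\zeta_0}(y,\zeta)\,d\mu(\zeta)$, which is finite since $\mu$ has continuous potentials, so that $x\mapsto u_{\zeta_0,\mu}(x,\zeta_0)=\int g_{\zeta_0}(x,\zeta)\,d\mu(\zeta)$ is finite (indeed continuous and bounded) everywhere on $\Xan$. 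Hence $G_{\mu,y}$ differs from $x\mapsto g_{\zeta_0}(x,y)-u_{\zeta_0,\mu}(x,\zeta_0)$ only by an additive constant.

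First I would check that $G_{\mu,y}$ defines a current in $D^0(\Xan)$: by Proposition~\ref{Hom} it suffices that the restriction of $G_{\mu,y}$ to $I(\Xan)$ be finite, and on $I(\Xan)$ the potential kernel $g_{\zeta_0}(\cdot,y)$ is finite (its only pole lies on the diagonal of $X(K)$, which is disjoint from $I(\Xan)$) and $u_{\zeta_0,\mu}(\cdot,\zeta_0)$ is finite. For the differential equation I would combine $dd^c g_{\zeta_0}(\cdot,y)=\delta_{\zeta_0}-\delta_y$ from Proposition~\ref{G1} with $dd^c u_{\zeta_0,\mu}(\cdot,\zeta_0)=\mu(\Xan)\delta_{\zeta_0}-\mu=\delta_{\zeta_0}-\mu$ from Proposition~\ref{u2} (using $\mu(\Xan)=1$) and subtract, obtaining $dd^c G_{\mu,y}=(\delta_{\zeta_0}-\delta_y)-(\delta_{\zeta_0}-\mu)=\mu-\delta_y$ by linearity of $dd^c$.

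For the continuity statement and the description of the pole, I would use that $g_{\zeta_0}(\cdot,y)$ is continuous on $\Xan$ with values in $(-\infty,\infty]$ (in the topology of a half-open interval near its pole) by Proposition~\ref{G1}, that $u_{\zeta_0,\mu}(\cdot,\zeta_0)$ is continuous and real-valued on $\Xan$ by the continuous-potentials hypothesis, and that the remaining terms are constants; the sum of such functions is again continuous with the same topology, and $G_{\mu,y}(x)=\infty$ exactly when $g_{\zeta_0}(x,y)=\infty$, i.e.\ exactly when $x=y\in X(K)$, once more by Proposition~\ref{G1}. Finally, for subharmonicity on $\Xan\backslash\{y\}$: there $G_{\mu,y}$ is real-valued and continuous, hence upper semi-continuous; its restriction still defines a current in $D^0(\Xan\backslash\{y\})$ (finiteness on $I(\Xan\backslash\{y\})\subset I(\Xan)$); and for every non-negative $\varphi\in A^0_c(\Xan\backslash\{y\})$, which vanishes near $y$, one has $\langle dd^c G_{\mu,y},\varphi\rangle=\int\varphi\,d(\mu-\delta_y)=\int\varphi\,d\mu\geq 0$, so $dd^c G_{\mu,y}\geq 0$ on $\Xan\backslash\{y\}$, and Definition~\ref{Prop not lisse} gives subharmonicity.

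I do not expect a genuine obstacle here: everything reduces to properties of $g_{\zeta_0}$ (Lemma~\ref{g}, Proposition~\ref{G1}) and of potential functions (Proposition~\ref{u2}) together with the continuous-potentials assumption. The only points needing a little care are keeping track of the topology on $(-\infty,\infty]$ when $y$ is of type~I, and recalling that membership in $D^0$ is equivalent to finiteness on $I(\Xan)$ rather than on all of $\Xan$, so that the pole at a type~I point $y$ does no harm.
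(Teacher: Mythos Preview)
Your proposal is correct and follows essentially the same route as the paper: both arguments use the decomposition~(\ref{a1}), invoke Proposition~\ref{G1} for the properties of $g_{\zeta_0}(\cdot,y)$, Proposition~\ref{u2} for the Laplacian of $u_{\zeta_0,\mu}(\cdot,\zeta_0)$, and the continuous-potentials hypothesis for the finiteness and continuity of the potential term. Your write-up is slightly more explicit in spelling out the subharmonicity check via Definition~\ref{Prop not lisse} and the Laplacian cancellation, but the substance is identical.
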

\begin{proof}
By the definition of the Arakelov--Green's function, we have 
\begin{align*}
G_{\mu,y}(x)&= g_{\zeta_0}(x,y)-\int g_{\zeta_0}(x,\zeta)~ d\mu(\zeta) -\int g_{\zeta_0}(y,\zeta)~ d\mu(\zeta)+ C_{\zeta_0}
\end{align*} for every $x\in \Xan$.
 Due to Proposition~\ref{G1}, the first term is continuous on~$\Xan$ and attains values in $\R\cup\{\infty\}$ with $g_{\zeta_0}(x,y)=\infty$ if and only if $x=y\in X(K)$. 
In particular, the first term is finitely valued on~$I(\Xan)$.
 Since  $\mu$ has continuous potentials, the other two terms are finitely valued and continuous on~$\Xan$.
Hence  $G_{\mu,y}\colon \Xan\to (-\infty,\infty]$ is continuous on~$\Xan$  with  $G_{\mu,y}(x)=\infty$ if and only if $x=y\in X(K)$. In particular, $G_{\mu,y}$ is finitely valued on~$I(\Xan)$, and so 
defines a current in $D^0(\Xan)$ by Proposition~\ref{Hom}.
It remains to calculate the Laplacian of $G_{\mu,y}$. 
By (\ref{a1}), we have
\begin{align*}
G_{\mu,y}&= g_{\zeta_0}(\cdot,y)- u_{\zeta_0,\mu}(\cdot,\zeta_0) -u_{\zeta_0,\mu}(y,\zeta_0)+ C_{\zeta_0}.
\end{align*} 
 Proposition~\ref{G1} and Proposition~\ref{u2} imply
$dd^c G_{\mu,y} =\mu-\delta_y.$
Hence $G_{\mu,y}$ is subharmonic on~$\Xan\backslash \{y\}$.
\end{proof}

Using the previous propositions,  $g_{\mu}$ is  a normalized Arakelov--Green's function as defined in Definition~\ref{AGlist} for every  probability measure $\mu$ on~$\Xan$ with continuous potentials.

\begin{kor}\label{AG}
	Let $\mu$ be a probability measure on~$\Xan$ with continuous potentials. Then the function $g_{\mu}$ is a normalized Arakelov--Green's function on~$\Xan$.
\end{kor}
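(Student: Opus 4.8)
The goal of Corollary~\ref{AG} is simply to collect the verifications that the function $g_\mu$ built in Definition~\ref{D Arakelov} (which is base-point independent by Lemma~\ref{L-ind}) satisfies the three defining properties i)--iii) of a normalized Arakelov--Green's function in Definition~\ref{AGlist}. The plan is therefore to go through the list and cite the propositions proved just before.

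First I would address property i) (Semicontinuity): this is exactly the content of Proposition~\ref{G}, which states that $g_\mu\colon\Xan\times\Xan\to(-\infty,\infty]$ is symmetric, finite and continuous off the diagonal, and strongly lower semi-continuous on the diagonal in the precise sense
$$g_\mu(x_0,x_0)=\liminf_{(x,y)\to(x_0,x_0),\,x\neq y}g_\mu(x,y).$$
So there is nothing left to do here beyond quoting Proposition~\ref{G}. Next, for property ii) (Differential equation), I would invoke Proposition~\ref{G'}: for every fixed $y\in\Xan$ the function $G_{\mu,y}=g_\mu(\cdot,y)$ defines a current in $D^0(\Xan)$ and satisfies $dd^c G_{\mu,y}=\mu-\delta_y$. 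Again this is precisely property ii), so it suffices to cite Proposition~\ref{G'}. Finally, property iii) (Normalization) $\int\int g_\mu(x,y)\,d\mu(x)d\mu(y)=0$ holds by construction: the constant $C_{\zeta_0}$ in Definition~\ref{D Arakelov} was chosen exactly so that this double integral vanishes, and Lemma~\ref{L-ind} shows this does not depend on the base point.

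Since all three properties have already been established, the proof is essentially a one-line bookkeeping argument. There is no real obstacle: the work was all done in Propositions~\ref{G} and \ref{G'} and in the defining choice of the normalizing constant. If one wanted to be slightly more careful, the only point worth a remark is that the double integral in property iii) is well-defined — but this follows from Proposition~\ref{G} (lower semi-continuity of $g_\mu$ on the compact space $\Xan\times\Xan$, hence $g_\mu$ is bounded below, so the Lebesgue integral against $\mu\times\mu$ makes sense in $(-\infty,\infty]$) combined with the finiteness built into the choice of $C_{\zeta_0}$. Hence the proof reads: properties i), ii), iii) of Definition~\ref{AGlist} follow respectively from Proposition~\ref{G}, Proposition~\ref{G'}, and the definition of the normalizing constant $C_{\zeta_0}$ together with Lemma~\ref{L-ind}; therefore $g_\mu$ is a normalized Arakelov--Green's function on $\Xan$.
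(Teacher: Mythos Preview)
Your proposal is correct and follows exactly the same route as the paper's own proof: property i) and symmetry from Proposition~\ref{G}, property ii) from Proposition~\ref{G'}, and property iii) by the choice of the normalizing constant $C_{\zeta_0}$ in Definition~\ref{D Arakelov}. The extra remarks you add about Lemma~\ref{L-ind} and the well-definedness of the double integral are harmless elaborations, but not needed beyond what the paper itself records.
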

\begin{proof}
	We need to know that all properties of the list in Definition~\ref{AGlist} hold.
	Property i) and symmetry are true due to Proposition~\ref{G}, ii) was shown in Proposition~\ref{G'}, and iii) follows by construction.
\end{proof}

\begin{bem}
	 For a  probability measure $\mu$ on~$\Xan$ and for a  point $y\in I(\Xan)$, Thuillier constructs in his thesis \cite[\S 3.4.3]{Th} a unique function $g_{y,\mu}\colon \Xan \to [-\infty,\infty)$ such that $dd^c g_{y,\mu}=\mu -\delta_y$, $g_{y,\mu}(y)=0$ and its restriction to $\Xan\backslash \{y\}$ is subharmonic.
	 His construction uses  \cite[Th\'eor\`eme 3.3.13 \& 3.4.12]{Th}.
	 If $\mu$ has continuous potentials, then $g_{y,\mu}$ and  $G_{\mu,y}$ define two currents in $D^0(\Xan)$ (cf.~Proposition~\ref{Prop not lisse}) having the same Laplacian $\mu-\delta_{y}$.
	 \cite[Lemma 3.3.12]{Th} implies that $g_{y,\mu}$ and  $G_{\mu,y}$  differ only by a harmonic function on~$\Xan$, which has to be constant by the Maximum Principle \ref{Max}.
\end{bem}

\section{Energy Minimization Principle}\label{EMP}
The Energy Minimization Principle is a very important theorem in dynamics and has many applications. 
The goal is to translate this principle into our non-archimedean setting.
For $X=\PB^{1}$ this was already done in \cite[\S 8.10]{BR}, and Matt Baker suggested to generalize their definition of Arakelov--Green's functions and their result to the author. 
In the following section, we give a proof of the Energy Minimization Principle for a smooth projective curve $X$ over our non-archimedean field $K$ using the techniques from \cite[\S 8.10]{BR}.

\begin{defn} Let $\mu$ be a probability measure on~$\Xan$ with continuous potentials. Then for every probability measure $\nu$ on~$\Xan$, we define the  corresponding $\mu$-\emph{energy integral} as 
	$$I_\mu(\nu):=\int \int  g_{\mu}(x,y) ~d\nu (y)d\nu(x).$$
Note that the integral is well-defined since  $g_{\mu} $ is lower semi-continuous on $\Xan\times \Xan$ by Proposition~\ref{G}, and hence Borel measurable.
\end{defn}

\begin{Thm}[Energy Minimization Principle]\label{THM}
	Let $\mu$ be a probability measure on~$\Xan$ with continuous potentials. Then 
	\begin{enumerate}
		\item $I_\mu(\nu)\geq 0$ for each probability measure $\nu$ on~$\Xan$, and 
		\item $I_\mu(\nu)=0$ if and only if $\nu=\mu$.
	\end{enumerate}
\end{Thm}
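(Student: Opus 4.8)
The plan is to follow the classical strategy from \cite[\S 8.10]{BR}, adapted to the curve setting, reducing everything to the case of measures with finite energy and using the already-established potential-theoretic machinery. Set $h(x):=\int g_\mu(x,y)\,d\nu(y)$; formally $dd^c h = \nu - \mu$ (this needs justification when $h$ is not a current, but works when $\nu$ has finite energy). Writing $\nu = \mu + (\nu-\mu)$ and expanding bilinearly, one gets $I_\mu(\nu) = I_\mu(\mu) + 2\int\int g_\mu(x,y)\,d\mu(x)\,d(\nu-\mu)(y) + \int\int g_\mu(x,y)\,d(\nu-\mu)(x)\,d(\nu-\mu)(y)$. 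The first term is $0$ by the normalization property (Definition~\ref{AGlist} iii)). The middle term vanishes because $\int g_\mu(x,y)\,d\mu(x) = \int g_\mu(x,y)\,d\mu(x)$ is, by property ii) and the normalization, identically zero as a function of $y$: indeed $dd^c\!\left(\int g_\mu(\cdot,y)\,d\mu(y)\right) = \mu - \mu = 0$, so this is a harmonic hence constant function, and the constant is $0$ by property iii). So $I_\mu(\nu) = \int\int g_\mu(x,y)\,d\sigma(x)\,d\sigma(y) =: J(\sigma)$ where $\sigma := \nu - \mu$ is a signed measure of total mass $0$.

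The heart of the argument is then to show $J(\sigma)\geq 0$ for every signed measure $\sigma$ of total mass zero, with equality iff $\sigma=0$. First I would reduce to the case where $\sigma$ has finite energy: if $I_\mu(\nu)=\infty$ there is nothing to prove for part (i), and for part (ii) one may assume finiteness. For finite-energy $\sigma$, the standard trick is to use the differential equation: there is a potential $u_\sigma$ (built from the potential functions of Section~\ref{Potentialfunctions}) with $dd^c u_\sigma = \sigma$, and then $J(\sigma) = \int u_\sigma \, d\sigma = \int u_\sigma \, dd^c u_\sigma$, which by the ``integration by parts''/Green's formula for the Laplacian on $\Xan$ equals a Dirichlet-type energy $\langle u_\sigma, u_\sigma\rangle$ that is manifestly $\geq 0$. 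On a metric graph this is literally $\sum_{\text{edges}} \int |u_\sigma'|^2$; on $\Xan$ one approximates $\sigma$ by measures supported on skeleta (using Proposition~\ref{skeletonprop} and retraction), reduces to the graph computation via Lemma~\ref{ggraph}, and passes to the limit. The positivity of this quadratic form is where the ``energy'' in ``Energy Minimization'' comes from.

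For the equality case, $J(\sigma)=0$ forces the Dirichlet energy of $u_\sigma$ to vanish, hence $u_\sigma$ is locally constant on each skeleton, hence (by density of $I(\Xan)$ and continuity) $u_\sigma$ is constant on $\Xan$, so $dd^c u_\sigma = \sigma = 0$, i.e. $\nu = \mu$. Conversely $\nu=\mu$ gives $I_\mu(\mu)=0$ by normalization. I would handle the approximation carefully: for a general probability measure $\nu$, push forward along retractions $\tau_\Gamma$ to get $\nu_\Gamma := (\tau_\Gamma)_*\nu$ supported on the skeleton $\Gamma$, observe $I_\mu(\nu_\Gamma) \to I_\mu(\nu)$ along an exhausting net of skeleta (using the retraction formula \eqref{Retraction} and monotone/dominated convergence as in the proof of Proposition~\ref{u1}), and prove the inequality for each $\nu_\Gamma$ by the finite graph computation; lower semi-continuity of $g_\mu$ (Proposition~\ref{G}) together with Fatou gives $I_\mu(\nu) \geq \liminf I_\mu(\nu_\Gamma) \geq 0$, or one argues the reverse inequality directly from the retraction formula.

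The main obstacle I anticipate is making the ``integration by parts'' step rigorous in the non-archimedean setting: the potential $u_\sigma$ need not be smooth ($u_\sigma \in D^0(\Xan)$ only), so $\int u_\sigma\, dd^c u_\sigma$ is a pairing of a current against itself and must be regularized. The clean way is to never leave the graphs: approximate $\sigma$ by $\sigma_\Gamma := (\tau_\Gamma)_*\sigma$ supported on a skeleton, where $u_{\sigma_\Gamma}$ is genuinely piecewise affine and the energy identity $J(\sigma_\Gamma) = \sum_e \int_e |u_{\sigma_\Gamma}'|^2 \geq 0$ is elementary, then control the error $J(\sigma) - J(\sigma_\Gamma)$ using the retraction formula for $g_\mu$ and the continuous-potentials hypothesis on $\mu$ (which is exactly what guarantees the relevant integrals converge and are finite). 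The secondary subtlety is that $\sigma = \nu - \mu$ may fail to have finite energy even when we want to conclude $\nu \neq \mu$; but then $I_\mu(\nu) = +\infty > 0$, which already settles (i), and for (ii) the hypothesis $I_\mu(\nu) = 0$ forces finite energy, so the reduction is harmless.
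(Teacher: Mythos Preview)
Your route is genuinely different from the paper's. The paper never expands $I_\mu(\nu)$ bilinearly and never invokes a Dirichlet form; instead it follows the classical potential-theoretic path of \cite{BR}: first a Maria-type maximum principle (Theorem~\ref{Maria}) for the generalized potential $u_\nu(\cdot,\mu)$, then Frostman's theorem (Theorem~\ref{Frostman}) showing that any minimizer $\omega$ of $I_\mu$ has $u_\omega(\cdot,\mu)\equiv V(\mu)$; applying $dd^c$ to this constant gives $\mu-\omega=0$, whence $V(\mu)=I_\mu(\mu)=0$ and both (i) and (ii) follow at once. Your approach, closer in spirit to Favre--Rivera-Letelier \cite{FR}, is conceptually cleaner and makes the quadratic-form nature of the energy transparent; the paper's route costs the whole of Section~\ref{EMP} in auxiliary theorems but delivers Maria and Frostman as results of independent interest.

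Your reduction $I_\mu(\nu)=\int\!\!\int g_{\zeta_0}\,d\sigma\,d\sigma$ with $\sigma=\nu-\mu$ is correct (the cross terms vanish because $u_\mu(\cdot,\mu)$ is harmonic on $\Xan$, hence constant, hence zero by normalization), and on each skeleton the Dirichlet identity $I_{\mu_\Gamma}(\nu_\Gamma)=\int_\Gamma\bigl|(u_\sigma|_\Gamma)'\bigr|^2\,d\ell\geq 0$ is elementary. But there is a real gap in the passage from skeleta to $\Xan$, most acutely in the equality case. First, the Fatou direction is backwards: lower semicontinuity of $g_\mu$ against the weakly convergent net $\nu_\Gamma\to\nu$ gives $I_\mu(\nu)\leq\liminf_\Gamma I_\mu(\nu_\Gamma)$, not $\geq$, so it does not by itself yield (i). More seriously, even granting the convergence $I_\mu(\nu)=\lim_\Gamma I_{\mu_\Gamma}(\nu_\Gamma)$, you cannot conclude from $I_\mu(\nu)=0$ that each skeleton energy $I_{\mu_\Gamma}(\nu_\Gamma)$ vanishes; a net of nonnegative reals with limit $0$ need not be identically $0$. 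What is actually needed is the \emph{monotonicity} $I_\mu(\nu)\geq I_{\mu_\Gamma}(\nu_\Gamma)$ for every skeleton $\Gamma\ni\zeta_0$, equivalently that the remainder
\[
\int\!\!\int\bigl(g_{\zeta_0}(x,y)-g_{\zeta_0}(\tau_\Gamma(x),\tau_\Gamma(y))\bigr)\,d\sigma(x)\,d\sigma(y)
\;=\;\sum_{B}\int_{\overline B}\!\int_{\overline B}\rho\bigl(w_{z_B}(x,y),z_B\bigr)\,d\sigma(x)\,d\sigma(y)
\]
is nonnegative, the sum running over the open-ball components $B$ of $\Xan\setminus\Gamma$ with boundary point $z_B$. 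Each summand is indeed $\geq 0$ because the Gromov-product kernel $\rho(w_z(\cdot,\cdot),z)$ on a rooted $\R$-tree is positive semidefinite (write it as $\int\mathbb{1}_{p\in[z,x]}\mathbb{1}_{p\in[z,y]}\,d\ell(p)$ and integrate), but this is an additional, nontrivial input you never isolate; ``controlling the error $J(\sigma)-J(\sigma_\Gamma)$'' is not enough---you must show the error is itself a nonnegative energy. Once this monotonicity is in hand, $I_\mu(\nu)=0$ forces $I_{\mu_\Gamma}(\nu_\Gamma)=0$ and hence $(\tau_\Gamma)_*\nu=(\tau_\Gamma)_*\mu$ for every $\Gamma$, and then $\nu=\mu$ by weak convergence of the retractions.
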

We show the principle in several steps. 
At first, we prove analogues of Maria's theorem (Theorem~\ref{Maria}) and Frostman's theorem (Theorem~\ref{Frostman}). 
In Maria's theorem we study the boundedness of the generalized potential function that is defined in the subsequent definition.

\begin{defn} Let $\mu$ be a probability measure on~$\Xan$ with continuous potentials. Then for every probability measure $\nu$ on~$\Xan$, we define the corresponding \emph{generalized potential function} by 
	$$u_\nu(\cdot,\mu):=\int  g_{\mu}(\cdot,y) ~d\nu (y).$$
\end{defn}

\begin{lem}\label{uuu}
Let $\mu$ be a  probability measure with continuous potentials and let $\nu$ be an arbitrary probability measure on~$\Xan$. 
Then for every $\zeta_0\in I(\Xan)$ we can write 
\begin{align}\label{uu}
u_\nu(\cdot,\mu)=u_{\zeta_0,\nu}(\cdot,\zeta_0)- u_{\zeta_0,\mu}(\cdot,\zeta_0) +C
\end{align}
on $\Xan$ for a finite constant $C$.
\end{lem}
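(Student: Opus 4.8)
The plan is to reduce everything to the definition of $g_\mu$ from Definition~\ref{D Arakelov} together with the description (\ref{a1}) of $g_{\zeta_0,\mu}$ in terms of potential functions, and then integrate in the second variable against $\nu$. Concretely, recall from (\ref{a1}) that
$$g_\mu(x,y) = g_{\zeta_0}(x,y) - u_{\zeta_0,\mu}(x,\zeta_0) - u_{\zeta_0,\mu}(y,\zeta_0) + C_{\zeta_0}$$
on $\Xan\times\Xan$, where $C_{\zeta_0} = \int\int g_{\zeta_0}(x,y)\,d\mu(x)d\mu(y)$ is finite because $\mu$ has continuous potentials. First I would fix $x\in\Xan$ and integrate this identity over $y$ with respect to $\nu$. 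The term $u_{\zeta_0,\mu}(x,\zeta_0)$ is constant in $y$, so it integrates to itself; the term $C_{\zeta_0}$ likewise integrates to $C_{\zeta_0}$; the term $\int g_{\zeta_0}(x,y)\,d\nu(y)$ is exactly the potential function $u_{\zeta_0,\nu}(x,\zeta_0)$ by the remark following Definition~\ref{D Arakelov} (since $g_{\zeta_0}(\zeta_0,x,y)=g_{\zeta_0}(x,y)$ by Corollary~\ref{3Var}); and the remaining term $\int u_{\zeta_0,\mu}(y,\zeta_0)\,d\nu(y)$ is a finite constant, call it $C'$, which I must justify is finite.

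Collecting these, one obtains
$$u_\nu(\cdot,\mu) = u_{\zeta_0,\nu}(\cdot,\zeta_0) - u_{\zeta_0,\mu}(\cdot,\zeta_0) + \bigl(C_{\zeta_0} - C'\bigr),$$
which is exactly (\ref{uu}) with $C := C_{\zeta_0}-C'$. The main point requiring care is the finiteness and measurability needed to split the integral: I would invoke Proposition~\ref{u1}, which gives that $u_{\zeta_0,\mu}(\cdot,\zeta_0)$ is finitely valued and continuous on $\Xan\setminus\supp(\mu)$ and lower semi-continuous on $\Xan$ (here $\zeta_0\in I(\Xan)$, so case i) of Proposition~\ref{u1} applies and the function takes finite values everywhere because $\mu$ has continuous potentials — indeed $u_{\zeta_0,\mu}(\cdot,\zeta_0) = \int g_{\zeta_0}(\cdot,\zeta)\,d\mu(\zeta)$ is the continuous bounded function from Definition~\ref{Dlog-c}). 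Hence $u_{\zeta_0,\mu}(\cdot,\zeta_0)$ is bounded and Borel, so $C' = \int u_{\zeta_0,\mu}(y,\zeta_0)\,d\nu(y)$ is a finite real number, and Fubini/Tonelli-type splitting of the integral of $g_\mu(x,\cdot)$ against $\nu$ is legitimate.

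I expect the only mild obstacle to be checking that no $\infty-\infty$ ambiguity arises when integrating $g_\mu(x,\cdot)$: by Proposition~\ref{G}, $g_\mu$ is lower semi-continuous on $\Xan\times\Xan$ hence bounded below on the compact space $\Xan\times\Xan$, so $g_\mu(x,\cdot)$ is bounded below and its $\nu$-integral is a well-defined element of $(-\infty,\infty]$, and the decomposition above shows it is in fact finite unless $x=y\in X(K)$ occurs on a $\nu$-positive set. Since each individual term on the right-hand side is either continuous and bounded or equals $u_{\zeta_0,\nu}(\cdot,\zeta_0)$, which is well-defined with values in $\R\cup\{\infty\}$ by the lemma following the definition of potential functions, the identity (\ref{uu}) holds pointwise on all of $\Xan$, with both sides interpreted in $(-\infty,\infty]$.
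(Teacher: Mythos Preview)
Your proposal is correct and follows essentially the same route as the paper: you expand $g_\mu(x,y)$ via (\ref{a1}), integrate in $y$ against $\nu$, identify $\int g_{\zeta_0}(x,y)\,d\nu(y)$ as $u_{\zeta_0,\nu}(x,\zeta_0)$ using Corollary~\ref{3Var}, and use that $\mu$ has continuous potentials to conclude that $C' = \int u_{\zeta_0,\mu}(y,\zeta_0)\,d\nu(y)$ is a finite constant, arriving at $C = C_{\zeta_0} - C'$. The paper does exactly this, only phrasing the expansion directly from Definition~\ref{D Arakelov} rather than citing (\ref{a1}); your additional remarks on measurability and the absence of $\infty-\infty$ are fine but not strictly needed beyond what the paper invokes.
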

\begin{proof}
Let $\zeta_0$ be a point in $I(\Xan)$. 
Then  by Corollary~\ref{3Var} $$u_{\zeta_0,\nu}(\cdot,\zeta_0)=\int g_{\zeta_0}(\zeta_0,\cdot,\zeta)~ d\nu(\zeta)=\int g_{\zeta_0}(\cdot,\zeta)~ d\nu(\zeta).$$
The same identity is true for $\mu$, i.e.~$u_{\zeta_0,\mu}(\cdot,\zeta_0)=\int g_{\zeta_0}(\cdot,\zeta)~ d\mu(\zeta)$,
which  is  a finitely valued continuous function on~$\Xan$ as $\mu$ has continuous potentials. 
	Thus we can write using the definition of the Arakelov--Green's function (Definition~\ref{D Arakelov}) 
	\begin{align*}
	u_\nu(x,\mu)&= \int g_\mu(x,y)~d\nu(y) \\
 &= \int g_{\zeta_0}(x,y)~d\nu(y)-\int g_{\zeta_0}(x,\zeta)~ d\mu(\zeta) -\int\int  g_{\zeta_0}(y,\zeta)~ d\mu(\zeta) d\nu(y) + C_{\zeta_0}\\
 &=u_{\zeta_0,\nu}(x,\zeta_0)- u_{\zeta_0,\mu}(x,\zeta_0)-\int u_{\zeta_0,\mu}(y,\zeta_0)~d\nu(y) + C_{\zeta_0}
	\end{align*}
for every $x\in \Xan$.
Since $u_{\zeta_0,\mu}(\cdot,\zeta_0)$ is bounded and continuous on~$\Xan$, we get 
$$u_\nu(\cdot,\mu)=u_{\zeta_0,\nu}(\cdot,\zeta_0)- u_{\zeta_0,\mu}(\cdot,\zeta_0) +C$$ on~$\Xan$ for a finite constant $C$.
\end{proof}
\begin{prop}\label{U}Let $\mu$ be a probability measure with continuous potentials and let $\nu$ be an arbitrary probability measure on~$\Xan$. Then
 $u_\nu(\cdot,\mu)\colon \Xan\to (-\infty,\infty]$ is continuous on~$\Xan\backslash \supp(\nu)$ and lsc on~$\Xan$.
Moreover, the restriction of  $u_\nu(\cdot,\mu)$ to every skeleton $\Gamma$ of $\Xan$ and to every path $[y,z]$  is finite and continuous.
\end{prop}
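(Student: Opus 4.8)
The plan is to reduce all assertions to the already-established properties of the single-base-point potential function $u_{\zeta_0,\nu}(\cdot,\zeta_0)$. Fix $\zeta_0\in I(\Xan)$. By Lemma~\ref{uuu} we may write
$$u_\nu(\cdot,\mu)=u_{\zeta_0,\nu}(\cdot,\zeta_0)-u_{\zeta_0,\mu}(\cdot,\zeta_0)+C$$
for a finite constant $C$, and by Corollary~\ref{3Var} the middle term equals $u_{\zeta_0,\mu}(x,\zeta_0)=\int_{\Xan}g_{\zeta_0}(x,\zeta)\,d\mu(\zeta)$, which is a bounded, finitely valued, continuous function on all of $\Xan$ since $\mu$ has continuous potentials (Definition~\ref{Dlog-c}). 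Hence $u_\nu(\cdot,\mu)$ differs from $u_{\zeta_0,\nu}(\cdot,\zeta_0)$ by a globally continuous bounded function, so it suffices to prove each assertion for $u_{\zeta_0,\nu}(\cdot,\zeta_0)$ and then transport it along this decomposition.

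For the global statement I would invoke Proposition~\ref{u1}~i): since $\zeta_0\in I(\Xan)$ is of type II or III we have $\zeta_0\notin X(K)$, and, using that $\nu$ is a positive Radon measure by Proposition~\ref{Radon} and that $\zeta_0\in I(\Xan)$ is an admissible choice for the second variable, that proposition yields that $u_{\zeta_0,\nu}(\cdot,\zeta_0)$ is finitely valued and continuous on $\Xan\backslash\supp(\nu)$ and lsc on $\Xan$; adding the continuous bounded function $-u_{\zeta_0,\mu}(\cdot,\zeta_0)+C$ preserves continuity off $\supp(\nu)$ and lower semicontinuity on $\Xan$. For the restriction statements: when $\Gamma$ is a skeleton of $\Xan$, or a path $[y,z]\subset\HM(\Xan)$, Lemma~\ref{u0} (with $\zeta=\zeta_0$) shows that $u_{\zeta_0,\nu}(\cdot,\zeta_0)|_\Gamma$ is finite and continuous, hence so is $u_\nu(\cdot,\mu)|_\Gamma$. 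For an arbitrary path $[y,z]$, every point of the relative interior $(y,z)$ is of type II or III, since a point of type I or IV has a fundamental system of open ball neighborhoods by Theorem~\ref{Str} and thus only one tangent direction, so it can occur only as an endpoint of an injective path; therefore $(y,z)\subset\HM(\Xan)$, and exhausting $(y,z)$ by compact subintervals reduces to the previous case. Continuity at an endpoint lying in $\HM(\Xan)$ again follows from Lemma~\ref{u0} applied to a slightly enlarged path, and continuity at an endpoint $z\in X(K)$ follows from the limit identity $\lim_{t\to z,\,t\in[\omega,z)}u_{\zeta_0,\nu}(t,\zeta_0)=u_{\zeta_0,\nu}(z,\zeta_0)$ of Proposition~\ref{u1}~iii), where one reads $(-\infty,\infty]$ with the half-open interval topology.

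The one genuinely delicate point is this last case: the behavior of $u_\nu(\cdot,\mu)$ near a type I endpoint of a general path, where the function may equal $+\infty$. This is exactly the situation for which Proposition~\ref{u1}~iii) was set up, so no new argument is required; everything else is bookkeeping with the decomposition $(\ref{uu})$ and the fact that $u_{\zeta_0,\mu}(\cdot,\zeta_0)$ is globally continuous and bounded.
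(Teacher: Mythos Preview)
Your proposal is correct and follows essentially the same route as the paper: decompose via Lemma~\ref{uuu}, use that $u_{\zeta_0,\mu}(\cdot,\zeta_0)$ is globally continuous since $\mu$ has continuous potentials, and then invoke Lemma~\ref{u0} and Proposition~\ref{u1} for $u_{\zeta_0,\nu}(\cdot,\zeta_0)$. You supply more detail than the paper on how to handle an arbitrary path $[y,z]$ (splitting into the $\HM(\Xan)$ part and the possible type~I endpoint via Proposition~\ref{u1}~iii)); one minor remark is that for a type~IV endpoint you cannot ``slightly enlarge'' the path, but no enlargement is needed there since such an endpoint already lies in $\HM(\Xan)$ and Lemma~\ref{u0} applies directly to a subpath terminating at it.
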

\begin{proof}
Let $\zeta_0$ be some point in $I(\Xan)$, then  
$$u_\nu(\cdot,\mu)=u_{\zeta_0,\nu}(\cdot,\zeta_0)- u_{\zeta_0,\mu}(\cdot,\zeta_0) +C$$ on~$\Xan$ for a finite constant $C$ by Lemma~\ref{uuu}.
Since $\mu$ has continuous potentials, $u_{\zeta_0,\mu}(\cdot,\zeta_0)$ is a finitely valued continuous function on $\Xan$.
Thus  it remains to prove  the continuity assertions for $u_{\zeta_0,\nu}(\cdot,\zeta_0)$. But these were all already shown in Lemma~\ref{u0} and Proposition~\ref{u1}. 
\end{proof}

\begin{prop}\label{UU}Let $\mu$ be a probability measure with continuous potentials and let $\nu$ be an arbitrary probability measure on~$\Xan$. Then
$u_\nu(\cdot,\mu)$ defines a current in $D^0(\Xan)$ with
	$$dd^c u_\nu(\cdot,\mu) = \mu-\nu.$$
	In particular, $u_\nu(\cdot,\mu)$ is subharmonic on~$\Xan\backslash\supp(\nu)$.
\end{prop}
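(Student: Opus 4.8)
The plan is to reduce everything to Lemma~\ref{uuu} and Proposition~\ref{u2}, since $u_\nu(\cdot,\mu)$ has already been expressed through the potential functions $u_{\zeta_0,\nu}(\cdot,\zeta_0)$ and $u_{\zeta_0,\mu}(\cdot,\zeta_0)$, whose behaviour under $dd^c$ is known. Fix a base point $\zeta_0\in I(\Xan)$. By Lemma~\ref{uuu} we have $u_\nu(\cdot,\mu)=u_{\zeta_0,\nu}(\cdot,\zeta_0)-u_{\zeta_0,\mu}(\cdot,\zeta_0)+C$ on $\Xan$ for a finite constant $C$. Both $u_{\zeta_0,\nu}(\cdot,\zeta_0)$ and $u_{\zeta_0,\mu}(\cdot,\zeta_0)$ define currents in $D^0(\Xan)$ by Proposition~\ref{u2} (applied with $\zeta=\zeta_0\in I(\Xan)$), so their difference plus a constant does as well; hence $u_\nu(\cdot,\mu)\in D^0(\Xan)$. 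Equivalently, one may simply note that $u_\nu(\cdot,\mu)$ is finite on $I(\Xan)$ and invoke Proposition~\ref{Hom}.

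Next I would compute the Laplacian. Since $\nu(\Xan)=\mu(\Xan)=1$, Proposition~\ref{u2} gives $dd^c u_{\zeta_0,\nu}(\cdot,\zeta_0)=\delta_{\zeta_0}-\nu$ and $dd^c u_{\zeta_0,\mu}(\cdot,\zeta_0)=\delta_{\zeta_0}-\mu$. The operator $dd^c\colon D^0(\Xan)\to D^1(\Xan)$ is $\R$-linear and the Laplacian of a constant function vanishes, so applying it to the decomposition above yields
$$dd^c u_\nu(\cdot,\mu)=(\delta_{\zeta_0}-\nu)-(\delta_{\zeta_0}-\mu)=\mu-\nu$$
as an identity of currents in $D^1(\Xan)$.

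For the subharmonicity claim, set $W:=\Xan\backslash\supp(\nu)$. Restricting the identity just obtained to the open subset $W$ (pairing only against test functions in $A_c^0(W)$) and using that $\nu$ puts no mass on $W$, we get $dd^c u_\nu(\cdot,\mu)=\mu|_W\geq 0$ on $W$. By Proposition~\ref{U}, $u_\nu(\cdot,\mu)$ is continuous, hence upper semi-continuous, on $W$, and its restriction to $I(W)$ is finite so that $u_\nu(\cdot,\mu)\in D^0(W)$. Therefore $u_\nu(\cdot,\mu)$ is subharmonic on $W$ by Definition~\ref{Prop not lisse}.

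Since every step is linear bookkeeping with facts already established, I do not anticipate a genuine obstacle; the only points deserving a word of care are that $dd^c$ annihilates the additive constant $C$ and that the global identity $dd^c u_\nu(\cdot,\mu)=\mu-\nu$ restricts correctly to the open set $W$ on which $\nu$ is absent.
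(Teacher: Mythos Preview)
Your proposal is correct and follows essentially the same approach as the paper: decompose via Lemma~\ref{uuu}, apply Proposition~\ref{u2} to each potential term and use linearity of $dd^c$ to obtain $\mu-\nu$, then invoke Proposition~\ref{U} for upper semi-continuity on $\Xan\backslash\supp(\nu)$. Your write-up is in fact a bit more explicit than the paper's (spelling out the cancellation of $\delta_{\zeta_0}$ and the restriction to $W$), but there is no substantive difference.
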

	 
\begin{proof}
Let $\zeta_0$ be a point in $I(\Xan)$, then  
$$u_\nu(\cdot,\mu)=u_{\zeta_0,\nu}(\cdot,\zeta_0)- u_{\zeta_0,\mu}(\cdot,\zeta_0) +C$$ on~$\Xan$ for a finite constant $C$ by Lemma~\ref{uuu}.
By Proposition~\ref{u2} and linearity, the function  $u_{\zeta_0,\mu}(\cdot,\zeta_0) $
 belong to $D^0(\Xan)$ with $dd^c u_\nu(\cdot,\mu)= \mu-\nu.$
Then the generalized potential function $u_\nu(\cdot,\mu)$ is therefore subharmonic on~$\Xan\backslash \supp(\nu)$ as it is upper semi-continuous by  Proposition~\ref{U}.
\end{proof}

The key tool of the proof of Maria's theorem in \cite{BR} is \cite[Proposition 8.16]{BR}, which we can translate to our situation in the following form.

\begin{lem}\label{path}
	Let $W$ be an open ball or an open annulus in $\Xan$ and let $f$ be a subharmonic function on a connected open subset  $V$ of $W$ with $\overline{V}\subset W$. 
	For every $x\in \HM(V)$, there is a path $\Lambda$ from $x$ to a boundary point $y\in \partial V$ such that $f$ is non-decreasing along $\Lambda$.
\end{lem}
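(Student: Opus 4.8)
The plan is to reduce to the local picture on an open ball or annulus, where the structure theory of skeleta gives very precise control over subharmonic functions. The key point is the convexity property from Remark~\ref{convex}: a subharmonic function restricted to the relative interior of an edge in a skeleton is convex. First I would observe that, since $x\in\HM(V)$, the point $x$ is of type II, III or IV, and by Theorem~\ref{Str} (or simply because $V$ sits in a ball or annulus) we may choose an exhausting sequence of skeleta $\Gamma_n$ of $W$ adapted to $V$, each containing $x$ once $x\in I(\Xan)$; for $x$ of type IV one works with the paths $[x,\tau_{\Gamma_n}(x)]$ instead. The idea is to build the path $\Lambda$ segment by segment: starting at $x$, among the finitely many tangent directions at the current point $x_k$ inside a suitable skeleton $\Gamma$, pick one along which the outgoing slope of $f$ is $\geq 0$; by the local structure and the fact that $\sum_{v} d_v F(x_k)$ equals the Laplacian mass at $x_k$, which is $\geq 0$ since $f$ is subharmonic, such a direction always exists (one cannot have every outgoing slope negative at a point of $\HM(V)$ unless $f$ has a strict local max there, which Proposition~\ref{Max} forbids unless $f$ is locally constant — in which case we may move in any direction). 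Then continue along that edge; by convexity on the edge interior the slope stays $\geq 0$, so $f$ is non-decreasing along the whole edge, until we reach the next vertex, where we repeat.

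More carefully, I would formalize this as follows. Fix a skeleton $\Gamma$ of $W$ with $\overline V\subset W$ chosen (via Proposition~\ref{skeletonprop}) to contain $x$ (if $x\in I(\Xan)$) as a vertex, and such that $\tau_\Gamma^{-1}$ of the relevant edges covers a neighborhood of the portion of $\Gamma$ meeting $V$. On $\Gamma\cap V$ the function $f\circ(\text{section})$ is piecewise affine and convex on each edge interior, with the total outgoing slope at each interior vertex being $\ge 0$. Running a greedy ascent path in the finite metric graph $\Gamma\cap V$: at each vertex choose an outgoing edge with $d_v f\ge 0$ (possible by the slope-sum condition and Proposition~\ref{Max}), traverse it — $f$ is non-decreasing there by convexity — and stop when the path exits $V$ through $\partial V$. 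Since $\Gamma\cap \overline V$ is a finite graph, either the ascent reaches $\partial V$ in finitely many steps, or it would have to cycle, which is impossible because $f$ is non-decreasing along the path and strictly increasing on any edge where some slope is positive; on a cycle this forces $f$ to be constant along the cycle, in which case we can simply route $\Lambda$ out of that constancy region to $\partial V$ (the boundary is nonempty and $V$ is connected). For $x$ of type IV not lying on $\Gamma$, first run along $[x,\tau_\Gamma(x)]$: here $f$ restricted to this path is subharmonic on the interior, hence convex, and since the type IV point has a single tangent direction, $f$ is monotone on an initial segment; if it is non-decreasing toward $\tau_\Gamma(x)$ we are reduced to the previous case at $\tau_\Gamma(x)\in\Gamma\cap V$, and if it is non-increasing toward $\tau_\Gamma(x)$ then $f$ is non-decreasing going the other way — but that way dead-ends at the type IV point with no exit, contradicting $\overline V\subset W$ unless $f$ is locally constant, handled as before.

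The main obstacle I anticipate is making the "cannot cycle forever" argument fully rigorous while simultaneously handling the degenerate case where $f$ is locally constant on large pieces — one needs Proposition~\ref{Max} to rule out strict interior maxima and then argue that on the (closed) locally-constant locus one can always escape to $\partial V$ because $V$ is connected with nonempty boundary. A secondary technical point is choosing the skeleton $\Gamma$ carefully enough that all the edges we traverse actually have their $\tau_\Gamma$-preimages inside $W$ so that subharmonicity (hence convexity) is available on each edge interior; this is exactly what Proposition~\ref{skeletonprop}(ii) and the hypothesis $\overline V\subset W$ provide, and one may need to refine $\Gamma$ finitely many times as the path is constructed, but since $\overline V$ is compact only finitely many refinements are needed. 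Apart from these, the argument is a routine greedy walk on a finite metric graph using convexity on edges and the nonnegative-slope-sum condition at vertices, which is precisely the content of the cited \cite[Proposition 8.16]{BR} transported to Thuillier's setting.
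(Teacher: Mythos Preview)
Your approach is essentially correct in spirit but takes a substantially different route from the paper. The paper's proof is a two-line reduction: since $\overline{V}$ lies in an open ball or open annulus of $\Xan$, it may be viewed as a subset of $\mathbb{P}^{1,\an}$ (every open ball or annulus in a smooth Berkovich curve is isomorphic to a standard one), and then \cite[Proposition~8.16]{BR} applies directly once one knows (Remark~\ref{BemSH}) that Thuillier's and Baker--Rumely's notions of subharmonicity agree on $\mathbb{P}^{1,\an}$. You instead sketch a direct proof via greedy ascent on a finite metric graph, which is essentially how \cite[Proposition~8.16]{BR} is proved inside \cite{BR}; so you are re-deriving the cited result rather than invoking it. The payoff of the paper's route is brevity and no need to manage skeleta, type~IV points, or termination by hand; the payoff of yours is that it is self-contained and makes the mechanism (convexity on edges plus nonnegative slope-sum at vertices) visible.

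Two minor remarks on your sketch. First, the discussion of cycles is unnecessary: an open ball or open annulus in $\Xan$ is uniquely path-connected (it retracts onto a point or an interval), so any finite subgraph of $W$ is a tree and the greedy path cannot revisit a vertex. Second, your handling of a type~IV starting point is slightly garbled; the clean statement is that such a point has a single tangent direction, so if the outgoing slope along $[x,\tau_\Gamma(x)]$ were negative then $x$ would be a strict local maximum, which Proposition~\ref{Max} forbids, hence the slope is $\geq 0$ and you move toward $\tau_\Gamma(x)$. With these simplifications your argument goes through, but the paper's embedding trick avoids all of it.
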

\begin{proof}
Since $\overline{V}$ is contained in an open ball or in an open annulus, we can view it is a subset of $\mathbb{P}^{1,\an}$. 
Then \cite[Proposition 8.16]{BR} and Remark \ref{BemSH} yield the claim.
\end{proof}
With the help of Proposition~\ref{U} and Lemma~\ref{path}, we can prove Maria's theorem. 
\begin{Thm}[Maria]\label{Maria}
	Let $\mu$ be a probability measure on~$\Xan$ with continuous potentials and let $\nu$ be an arbitrary probability measure on~$\Xan$. If there is a constant $M<\infty$ such that $u_\nu(\cdot,\mu)\leq M$ on~$\supp(\nu)$, then $u_\nu(\cdot,\mu)\leq M$ on~$\Xan$.
\end{Thm}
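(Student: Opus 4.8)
The strategy is the classical one: we already know from Proposition~\ref{UU} that $u_\nu(\cdot,\mu)$ is subharmonic on the open set $\Xan\setminus\supp(\nu)$, so by the Maximum Principle (Proposition~\ref{Max}) it cannot attain an interior maximum there unless it is locally constant. The point is to propagate the bound $M$ on $\supp(\nu)$ to all of $\Xan$; the danger is only that $u_\nu(\cdot,\mu)$ could grow as we move into a connected component $U$ of $\Xan\setminus\supp(\nu)$. First I would fix such a component $U$ and argue that it suffices to bound $u_\nu(\cdot,\mu)$ on $\HM(U)$, since by Proposition~\ref{U} the restriction of $u_\nu(\cdot,\mu)$ to any path is continuous, so its value at a type~I point of $U$ is a limit of values at type~II/III points along the path leading to it (using the limit formula in \eqref{2.2}, transported through Lemma~\ref{uuu}). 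Thus we reduce to showing $u_\nu(x,\mu)\le M$ for $x\in\HM(U)$.

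Next I would use the structure of components of $\Xan\setminus\supp(\nu)$: by Theorem~\ref{Str} (simple open neighborhoods), each point of $U$ has a neighborhood that is an open ball or an open annulus, and more importantly each connected component $U$ of the complement of $\supp(\nu)$ is itself of a controlled type. The cleanest route is: given $x\in\HM(U)$, choose a connected open $V$ with $x\in V$, $\overline V\subset U\cap W$ for an open ball or annulus $W$ (which is possible because $U$ is open and $\Xan$ is locally simple open). Then apply Lemma~\ref{path}: there is a path $\Lambda$ from $x$ to a boundary point $y\in\partial V$ along which $u_\nu(\cdot,\mu)$ is non-decreasing, hence $u_\nu(x,\mu)\le u_\nu(y,\mu)$. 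This lets us ``push'' the value of $u_\nu$ at $x$ outward to points closer to $\partial U\subset\supp(\nu)$.

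To make this into an actual bound I would run the following exhaustion/limiting argument. Fix $x\in\HM(U)$ and suppose for contradiction $u_\nu(x,\mu)>M$. Consider the open set $U':=\{z\in U: u_\nu(z,\mu)>M\}$, which is open by lower semicontinuity of $-u_\nu(\cdot,\mu)$ — more precisely $u_\nu(\cdot,\mu)$ is continuous on $\Xan\setminus\supp(\nu)$ by Proposition~\ref{U}, so $U'$ is genuinely open, and $x\in U'$. On $U'$ the function $u_\nu(\cdot,\mu)$ is subharmonic (Proposition~\ref{UU}), and on $\partial U'$ (which lies in $U$ since $u_\nu(\cdot,\mu)=M<$ anything $>M$ forces $\partial U'\cap\supp(\nu)=\emptyset$ only if the sup over $\supp\nu$ is $\le M$; here we use the hypothesis) we have $u_\nu(\cdot,\mu)\le M$. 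Now the path from Lemma~\ref{path}, started at $x$ inside a ball/annulus neighborhood contained in $U'$, is non-decreasing; but it must eventually reach $\partial U'$ (since $u_\nu$ is bounded on the compact $\Xan$, the path cannot stay in $U'$ forever without contradicting that $U'$ is a proper subset — here one argues that the monotone path, if it stayed in $U'$, would have to converge to a point of $\overline{U'}$, and continuity along paths plus openness of $U'$ forces that limit point into $\partial U'$). At the boundary point $y\in\partial U'$ we get $M\ge u_\nu(y,\mu)\ge u_\nu(x,\mu)>M$, a contradiction. This is the content of the original argument in \cite[proof of Theorem~8.18]{BR}.

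The main obstacle I anticipate is the last step: controlling the behavior of the non-decreasing path at the boundary of $U'$ and ensuring it actually exits $U'$ to a point where the bound $M$ applies. In \cite{BR} the ambient space $\mathbb{P}^{1,\an}$ is a single tree and $\supp(\nu)$-complements are balls, which makes ``pushing outward'' transparent; on a general curve one must instead work component-by-component and invoke Lemma~\ref{path} locally inside ball/annulus pieces, then concatenate. Making the concatenation rigorous — in particular verifying that the concatenated monotone path reaches $\supp(\nu)$ (or at least reaches points with $u_\nu\le M$) rather than spiraling inside $U$ — requires a compactness argument on $\Xan$ together with the fact (Proposition~\ref{U}) that $u_\nu(\cdot,\mu)$ is continuous and finite along every path, so a monotone increasing path has a limit value and a limit point, and at that limit point subharmonicity (Proposition~\ref{Max}) forbids a strict interior maximum. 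I would structure the write-up so that this topological step is isolated as the key lemma and everything else is bookkeeping with the identities from Lemma~\ref{uuu} and Proposition~\ref{u1}.
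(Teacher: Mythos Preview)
Your overall strategy is sound and matches the paper's in spirit: reduce to points of $\HM(U)$, exploit subharmonicity on the complement of $\supp(\nu)$ via Proposition~\ref{UU}, and use Lemma~\ref{path} together with the path-continuity of Proposition~\ref{U} to push values toward $\supp(\nu)$. However, the step you yourself flag as the ``main obstacle'' is a genuine gap, and the paper closes it by a different mechanism than concatenation of local monotone paths.

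The problem with concatenating applications of Lemma~\ref{path} is that on a general curve the open set $U'=\{u_\nu(\cdot,\mu)>M\}$ need not sit inside any ball or annulus (it can contain cycles of the skeleton), so you only ever get short monotone paths inside local ball/annulus pieces. There is no evident reason the iteration produces a path converging to a point of $\partial U'$: the $u_\nu$-values along the concatenated path are increasing and bounded, hence convergent, but compactness only gives accumulation points of the path, and at an accumulation point lying in $\supp(\nu)$ the function $u_\nu(\cdot,\mu)$ is merely lsc, so you cannot match the limit value with the value there. Relatedly, your implicit boundary Maximum Principle on $U'$ would need $\limsup_{x\to y}u_\nu(x,\mu)\le M$ for $y\in\partial U'\cap\supp(\nu)$, and lower semi-continuity goes the wrong way for that. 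This is precisely why Maria's theorem is not a direct corollary of Proposition~\ref{Max}.

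The paper sidesteps concatenation entirely. It fixes $x_0\in I(V)$ with $u_\nu(x_0,\mu)>M$ in a component $V$ of $\Xan\setminus\supp(\nu)$, chooses a skeleton $\Gamma$ with $x_0\in\Gamma$, and exhausts $V$ by a directed system $(Y_\alpha)_\alpha$ of connected strictly affinoid domains containing $x_0$. The Maximum Principle on each $Y_\alpha$ produces a type~II point $z_\alpha\in\partial Y_\alpha$ with $u_\nu(z_\alpha,\mu)\ge u_\nu(x_0,\mu)>M$; a subnet converges to some $z\in\partial V\subset\supp(\nu)$. The contradiction is then obtained by a \emph{local} analysis near $z$, split into two cases. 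If $z\notin\Gamma$, one works in a small open ball $B_z$ about $z$ with $\overline{B_z}\subset\Xan\setminus\Gamma$ and applies Lemma~\ref{path} once inside $V\cap B_z$; the resulting monotone path must end either at the single boundary point $\zeta_z$ of $B_z$ or at a point of $\partial V$, and in both cases path-continuity (Proposition~\ref{U}) forces a value $>M$ on $\supp(\nu)$. If $z\in\Gamma$, one shows that the retractions $\tau_\Gamma(z_\alpha)$ lie in $V\cap\Gamma$, still satisfy $u_\nu(\tau_\Gamma(z_\alpha),\mu)\ge u_\nu(x_0,\mu)$ (this step itself uses Lemma~\ref{path} in balls hanging off $\Gamma$), and converge to $z$; then continuity of $u_\nu(\cdot,\mu)$ restricted to the skeleton $\Gamma$ (Proposition~\ref{U}) yields $u_\nu(z,\mu)>M$. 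The affinoid exhaustion is what replaces your path-splicing: it manufactures the global ``push to the boundary'' via the Maximum Principle on compact pieces, and the delicate boundary behavior is then handled by continuity along $\Gamma$ or along a single explicit path, never by a $\limsup$ estimate at points of $\supp(\nu)$.
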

\begin{proof}
	Let $V$ be a connected component of $\Xan\backslash \supp(\nu)$ and assume there is a point $x_0\in V$ such that $u_\nu(x_0,\mu)> M$.
	Note that $V$ is path-connected since $\Xan$ is locally path-connected. 
If $B$ is an open ball in $\Xan$, then between two points $x,y\in B$ there is only one path in $\Xan$ by the structure of $\Xan$.
Thus $V\cap B$  is uniquely path-connected for every open ball $B$ in $\Xan$.
We have seen in Proposition~\ref{U} that the generalized potential function $u_\nu(\cdot,\mu)$ is continuous on~$V\subset\Xan\backslash \supp(\nu)$.
Hence we may assume $x_0$ to be contained in the dense subset $I(V)$ of $V$, and so we can choose a skeleton $\Gamma$ of $\Xan$ containing $x_0$ by Proposition~\ref{skeletonprop}.

Let $(Y_\alpha)_\alpha$ be the directed system of connected strictly affinoid domains contained in $V$ and containing $x_0$.
Note that the union of two connected strictly affinoid domains $Y_1,Y_2$ in $\Xan$ both containing $x_0$  with $ Y_1\cup Y_2\neq \Xan$    is again a connected strictly affinoid domain in $\Xan$ by \cite[Corollaire 2.1.17]{Th}.
Then $u_\nu(\cdot,\mu)$ is continuous on~$Y_\alpha$ and subharmonic on the relative interior $Y_\alpha^\circ$ by Proposition~\ref{U} and Proposition~\ref{UU}. Hence $u_\nu(\cdot,\mu)$  attains a maximum on~$Y_\alpha$ in a point $z_\alpha\in \partial Y_\alpha$ (see Maximum Principle~\ref{Max}), i.e.
\begin{align}\label{STRU}
u_\nu(z_\alpha,\mu)=\max_{x\in Y_\alpha} u_\nu(x,\mu)\geq \max_{x\in Y_\alpha^\circ} u_\nu(x,\mu)\geq u_\nu(x_0,\mu)>M
\end{align} for every $\alpha$.
Then $\langle z_\alpha\rangle _\alpha$ defines a net of type II points in $V$.
As $\overline{V}$ is compact,  we may assume by passing to a subnet that $\langle z_\alpha\rangle _\alpha$  converges to a point $z\in \overline{V}$. 
Due to  $V=\bigcup_\alpha Y_\alpha$ and $z_\alpha\in \partial Y_\alpha$, the point $z$ has to ly in  $\partial V\subset \supp(\nu)$. 
In the following, we use this net to get a contradiction to $u_\nu(\cdot ,\mu)\leq M$ on~$\partial V$.
Recall that  $\Gamma$ is a skeleton of $\Xan$ containing $x_0$.

If $z\in \partial V\backslash \Gamma$, there exists an open ball $B_z$ in $\Xan\backslash \Gamma$ containing $z$. 
We can find $B_z$ such that  $\overline{B_z}=B_z\cup \{\zeta_z\}\subset \Xan\backslash \Gamma$. 
We may assume $\langle z_\alpha\rangle_\alpha$ to ly in $B_z$.
Then every path from a $z_\alpha$  to $x_0$, or more generally to the skeleton, goes by construction through $\zeta_z$. 
%Since $V$ contains $z_\alpha$ and a point in the skeleton, the point $x_0$, and $V$ is path-connected,  $\zeta_z$ has to be contained in $V$ as well.
Hence for  every $\alpha$ the path $[z_\alpha,\zeta_z]$ lies inside $Y_\alpha$ as $z_\alpha$ and $x_0$ do, and so $u_\nu(z_\alpha,\mu)\geq u_\nu(\cdot,\mu)$ on~$[z_\alpha,\zeta_z]$ by (\ref{STRU}).
Assume we have equality for every $\alpha$, then $$u_\nu(\cdot,\mu)\equiv u_\nu(z_\alpha,\mu)\geq u_\nu(x_0,\mu)$$  on~$(z,\zeta_z]$ since we can write $(z,\zeta_z]\subset \bigcup_\alpha [z_\alpha,\zeta_z]$ as $z_\alpha$ converges to $z$.
 Proposition~\ref{U} implies
$$u_\nu(z,\mu)=\lim_{x\in[\zeta_z,z),~x\to z}u_\nu(x,\mu)=u_\nu(\zeta_z,\mu)\geq u_\nu(x_0,\mu)>M$$ contradicting $u_\nu(\cdot,\mu)\leq M$ on~$\supp(\nu)$.
Consequently, we may assume that there is a $z_\alpha $  and a point $y_\alpha\in (z_\alpha,\zeta_z]$ such that  $u_\nu(z_\alpha,\mu)> u_\nu(y_\alpha,\mu)$.
Our function $u_\nu(\cdot,\mu)$ is subharmonic on the connected open subset $V\cap B_z$ and $z_\alpha\in I(V\cap B_z)$, and so there exists a path $\Lambda$ from $z_\alpha$ to a boundary point of $V\cap B_z$  by Lemma~\ref{path} such that $u_\nu(\cdot,\mu)$ is non-decreasing along  $\Lambda$.
The boundary points of $V\cap B_z$ consist of points in $\partial V$ and $\zeta_z$.  
Since we have already seen that there is a point $y_\alpha\in (z_\alpha,\zeta_z]$ such that  $u_\nu(z_\alpha,\mu)> u_\nu(y_\alpha,\mu)$, $\Lambda$ cannot be the path $[z_\alpha,\zeta_z]$.
Hence $\Lambda$ is a path to a boundary  point $z'\in\partial V$ and  we get the contradiction
$$u_\nu(z',\mu)=\lim_{x\in\Lambda^\circ,~x\to z'}u_\nu(x,\mu)\geq u_\nu(z_\alpha,\mu)>u_\nu(x_0,\mu)> M,$$
where $u_\nu(\cdot,\mu)$ restricted to $\Lambda$ is continuous by Proposition~\ref{U}.

If $z\in \partial V\cap \Gamma$, we show that $\langle\tau_\Gamma(z_\alpha)\rangle_\alpha$ defines a net in $V\cap \Gamma$ converging to $z$ with $u_\nu(\tau_\Gamma(z_\alpha),\mu)\geq u_\nu(x_0,\mu)> M$ for every $\alpha$. 
Then we use again Proposition~\ref{U}.
Since $\tau_\Gamma$ is continuous, the net $\langle\tau_\Gamma(z_\alpha)\rangle_\alpha$ converges to $\tau_\Gamma(z)=z$.
Clearly, $\langle\tau_\Gamma(z_\alpha)\rangle_\alpha$ lies in $\Gamma$.
The open set $V$ is path-connected, and so there exists a path between $z_\alpha$ and $x_0$  in $V$.
By the construction of the retraction map and due to $x_0\in \Gamma$, $\tau_\Gamma(z_\alpha)$ lies inside this path, and hence it lies in $V$.
We continue with $u_\nu(\tau_\Gamma(z_\alpha),\mu)\geq u_\nu(x_0,\mu)$ for every $z_\alpha$.
Assume that $z_\alpha\neq \tau_\Gamma(z_\alpha)$ because otherwise we are done by (\ref{STRU}).
Denote by $B_\alpha$ the connected component of $\Xan\backslash \Gamma$ containing $z_\alpha$, and choose a sequence of type II points $\zeta_n\in [z_\alpha,\tau_\Gamma(z_\alpha)]^\circ$ converging to $\tau_\Gamma(z_\alpha)$. 
Note that there is  only one path from $z_\alpha$ to $\tau_\Gamma(z_\alpha)$ in $\Xan$, and this path lies in $V$
because $z_\alpha,\tau_\Gamma(z_\alpha)\in V$ and $V$ is path-connected. 
Thus each $\zeta_n$ lies in $V$ as well. 
Let $B_{\alpha,n}$ be the open ball containing $z_\alpha$ and having $\zeta_n$ as unique boundary point.
Since $u_\nu(\cdot,\mu)$ is subharmonic on~$V\cap B_{\alpha,n} $ for every $n\in \N$,   there is a path $\Lambda_n$ from $z_\alpha$ to a boundary point $z_n'$ in $\partial (V\cap B_{\alpha,n})\subset \partial V \cup \{\zeta_n\} $ such that $u_\nu(\cdot,\mu)$ is non-decreasing along  $\Lambda_n$ by Lemma~\ref{path}.
If there exists an $n\in \N$ with $z_n'\in \partial V$, then Proposition~\ref{U} and (\ref{STRU}) imply  
$$u_\nu(z_n',\mu)=\lim_{x\in\Lambda_n^\circ,~x\to z_n'}u_\nu(x,\mu)\geq u_\nu(z_\alpha,\mu)\geq u_\nu(x_0,\mu)> M$$
contradicting $u_\nu(\cdot,\mu)\leq M$ on~$\supp(\nu)$.
Hence  $\Lambda_n=[z_\alpha,\zeta_n]$ for all $n\in \N$.
Recall that $(\zeta_n)_n$ is a sequence in $V$ converging to $\tau_\Gamma(z_\alpha)\in V$. 
Since $u_\nu(\cdot,\mu)$ is continuous on~$V$ and $u_\nu(\cdot,\mu)$ is non-decreasing along $\Lambda_n=[z_\alpha,\zeta_n]$, Proposition~\ref{U} yields 
\begin{align}\label{STRU2}
u_\nu(\tau_\Gamma(z_\alpha),\mu)=\lim_{n\to \infty}u_\nu(\zeta_n,\mu)\geq u_\nu(z_\alpha,\mu).
\end{align}
Altogether, we have a net $(\tau_\Gamma(z_\alpha))_\alpha$  in $V\cap \Gamma$ converging to $z$ such that $$u_\nu(\tau_\Gamma(z_\alpha),\mu)\geq u_\nu(x_0,\mu)> M$$ for every $\alpha$. 
Proposition~\ref{U} tells us that $u_\nu(\cdot,\mu)$ restricted to $\Gamma$ is continuous, and hence using (\ref{STRU}) and (\ref{STRU2}) we get
$$u_\nu(z,\mu)=\lim_{\alpha}u_\nu(\tau_\Gamma(z_\alpha),\mu)\geq u_\nu(x_0,\mu)> M$$
contradicting $u_\nu(\cdot,\mu)\leq M$ on~$\supp(\nu)$.

Hence there cannot exist a point $x_0$ in $V$ with $u_\nu(x_0,\mu)> M$.
\end{proof}

\begin{defn}
	Let $\mu$ be a probability measure with continuous potentials, then we define the \emph{$\mu$-Robin constant} as 
	$$V(\mu):=\inf_{\nu} I_\mu(\nu),$$ where $\nu$ is running over all probability measures supported on~$\Xan$.
\end{defn}
\begin{lem}\label{Vmu} We have $V(\mu)\in \R_{\leq 0}$ and there exists a probability measure $\omega$ on~$\Xan$ such that $I_\mu(\omega)=V(\mu)$.
\end{lem}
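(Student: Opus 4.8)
The plan is to use the direct method of the calculus of variations. First I would establish the two-sided bound on $V(\mu)$. For the upper bound $V(\mu)\le 0$, I take $\nu=\mu$ in the definition of $V(\mu)$ and invoke the normalization property iii) of the normalized Arakelov--Green's function $g_\mu$ from Corollary~\ref{AG}, which gives $I_\mu(\mu)=\int\int g_\mu(x,y)\,d\mu(x)d\mu(y)=0$, so $V(\mu)\le I_\mu(\mu)=0$. For the lower bound, recall from Proposition~\ref{G} that $g_\mu$ is lower semi-continuous on the compact space $\Xan\times\Xan$; hence it attains a minimum there and is bounded below by some constant $M\in\R$. Then $I_\mu(\nu)\ge M$ for every probability measure $\nu$ on~$\Xan$, so $V(\mu)\ge M>-\infty$. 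Together with $V(\mu)\le 0$ this yields $V(\mu)\in\R_{\le 0}$.

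For the existence of a minimizer I would pick a sequence $(\nu_n)_n$ of probability measures with $I_\mu(\nu_n)\to V(\mu)$ (possible since $V(\mu)$ is a real infimum). By Proposition~\ref{Radon} (Prohorov for nets) there is a subnet $(\nu_{n_\alpha})_\alpha$ converging weakly to a probability measure $\omega$ on~$\Xan$, and along this subnet we still have $I_\mu(\nu_{n_\alpha})\to V(\mu)$. It then suffices to show $I_\mu(\omega)\le V(\mu)$: the reverse inequality $I_\mu(\omega)\ge V(\mu)$ holds automatically because $\omega$ is admissible in the infimum defining $V(\mu)$, so these two inequalities force $I_\mu(\omega)=V(\mu)$. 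This reduces everything to the lower semi-continuity of the functional $\nu\mapsto I_\mu(\nu)$ with respect to weak convergence.

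To prove this lower semi-continuity I would argue as in the appendix of \cite{BR}: since $g_\mu$ is lower semi-continuous and bounded below on the compact space $\Xan\times\Xan$, by \cite[Proposition A.3]{BR} it is an increasing supremum $g_\mu=\sup_k\varphi_k$ of continuous functions $\varphi_k\in\CS^0(\Xan\times\Xan)$. If $\nu_\alpha\to\omega$ weakly, then $\nu_\alpha\times\nu_\alpha\to\omega\times\omega$ weakly on~$\Xan\times\Xan$ (check this first on tensor products $g\otimes h$ and extend by Stone--Weierstrass, using compactness of $\Xan$). Hence for each fixed $k$ the map $\nu\mapsto\int\int\varphi_k(x,y)\,d\nu(x)d\nu(y)$ is weakly continuous, and by the Monotone Convergence Theorem $I_\mu(\nu)=\sup_k\int\int\varphi_k\,d\nu\,d\nu$; being a supremum of weakly continuous functionals, $I_\mu$ is weakly lower semi-continuous. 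Applying this to the subnet gives $I_\mu(\omega)\le\liminf_\alpha I_\mu(\nu_{n_\alpha})=V(\mu)$, which completes the argument.

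I expect the main obstacle to be precisely this lower semi-continuity of the energy functional: one must combine the representation of the lower semi-continuous, bounded-below kernel $g_\mu$ as an increasing limit of continuous functions with the fact that weak convergence of probability measures is preserved under forming products, so that the double integral behaves correctly in the limit. The rest of the argument (the bounds on $V(\mu)$ and the compactness extraction) is routine given Corollary~\ref{AG}, Proposition~\ref{G} and Proposition~\ref{Radon}.
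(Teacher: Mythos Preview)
Your proposal is correct and follows essentially the same route as the paper: the bounds on $V(\mu)$ are argued identically (normalization of $g_\mu$ for the upper bound, lower semi-continuity of $g_\mu$ on the compact product for the lower bound), and the existence of a minimizer is obtained via Prohorov compactness together with the representation of $g_\mu$ as a supremum of continuous functions from \cite[Proposition~A.3]{BR}. The only cosmetic differences are that the paper works directly with the full supremum $\sup\{\int\!\int g\,d\omega\,d\omega : g\in\CS^0,\ M\le g\le g_\mu\}$ rather than extracting an increasing sequence $(\varphi_k)_k$, and it cites \cite[Lemma~6.5]{BR} for the weak convergence of the product measures instead of sketching the Stone--Weierstrass argument you give.
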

\begin{proof}
First, we explain why $V(\mu)$ is a non-positive real number.
	The normalized Arakelov--Green's function $g_\mu$ is  bounded from below as a lsc function on the compact space $\Xan\times \Xan$ by Proposition~\ref{G}, and hence we have $$V(\mu)=\int\int g_\mu(x,y)~d\nu(x)d\nu(y)>-\infty.$$
	On the other hand, $$V(\mu)\leq I_\mu(\mu)=\int \int g_\mu(x,y)~d\mu(x)d\mu(y)=0$$  by the normalization of $g_\mu$.
Thus  $V(\mu)\in \R_{\leq 0}$.

We show the  second part of the assertion applying the same argument used to prove the existence of an equilibrium measure in \cite[Proposition 6.6]{BR}.
Let $\omega_i$ be a sequence of probability measures such that $\lim_{i\to\infty}I_\mu(\omega_i)=V(\mu)$.
By Proposition~\ref{Proho}, we can pass to a subsequence converging weakly to a probability measure $\omega$ on~$\Xan$.
Due to $I_\mu(\omega)\geq V(\mu)$ by the definition of the Robin constant, it remains to show the inequality  $I_\mu(\omega)\leq V(\mu)$.
By Proposition~\ref{G}, the normalized Arakelov--Green's function $g_\mu$ is lsc on the compact space $\Xan\times\Xan$, and so it is bounded from below  by some constant $M\in \R$. 
Proposition~\ref{Radon} tells us that $\omega$ is a Radon measure, and so \cite[Proposition A.3]{BR} yields the following description
$$I_\mu(\omega)=\int\int g_\mu(x,y)~d\omega(x)d\omega(y)=\sup_{\substack{g\in \CS^0(\Xan\times\Xan),\\ M\leq g\leq g_\mu}}\int \int g(x,y)~d\omega(x)d\omega(y),$$  for the space  $\CS^0(\Xan\times\Xan)$ of real-valued continuous functions on~$\Xan\times\Xan$.
For every $g\in \CS^0(\Xan\times\Xan)$ satisfying  $M\leq g\leq g_\mu$, we have 
\begin{align*}
\int \int g(x,y)~d\omega(x)d\omega(y)&=\lim_{i\to\infty}\int \int g(x,y)~d\omega_i(x)d\omega_i(y)\\
& \leq \lim_{i\to\infty}\int \int g_\mu(x,y)~d\omega_i(x)d\omega_i(y)\\
& = \lim_{i\to\infty} I_\mu(\omega_i)=V(\mu),
\end{align*} where the first identity is proven for example in \cite[Lemma 6.5]{BR} and the inequality holds as  every $\omega_i$ is  positive.
Hence $I_\mu(\omega)\leq V(\mu)$.
\end{proof}
\begin{Thm}[Frostman]\label{Frostman}
	Let $\mu$ be a probability measure on~$\Xan$ with continuous potentials and let $\omega$ be a probability measure on~$\Xan$ such that $I_\mu(\omega)=V(\mu)$. Then we have on $\Xan$
	$$u_\omega(\cdot,\mu)\equiv V(\mu).$$
\end{Thm}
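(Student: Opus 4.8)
The strategy follows the classical Frostman argument adapted to the non-archimedean setting, exactly as in \cite[\S 8.10]{BR}. The plan is to show first that $u_\omega(\cdot,\mu)\geq V(\mu)$ holds $\omega$-almost everywhere, then upgrade this to an inequality everywhere on $\supp(\omega)$, and finally conclude equality on all of $\Xan$.

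First I would establish the lower bound on a set of full $\omega$-measure. Suppose, for contradiction, that the set $E:=\{x\in \Xan : u_\omega(x,\mu) < V(\mu)\}$ has positive $\omega$-measure. Since $u_\omega(\cdot,\mu)$ is lower semi-continuous by Proposition~\ref{U}, $E$ is Borel, and we can find $\varepsilon>0$ and a compact subset $e\subset E$ with $\omega(e)>0$ such that $u_\omega(\cdot,\mu)\leq V(\mu)-\varepsilon$ on $e$. Now form a perturbed probability measure $\sigma:=\omega - t(\omega|_e) + t\,\omega(e)\,\mu$ for small $t>0$, or more simply $\sigma_t := (1-t)\omega + t\,\sigma'$ where $\sigma'$ is the normalized restriction $\omega|_e/\omega(e)$; by a standard computation using the bilinearity of $(\nu,\nu')\mapsto \int\int g_\mu\, d\nu\, d\nu'$ together with the normalization $I_\mu(\mu)=0$ and the symmetry of $g_\mu$, one finds
$$I_\mu(\sigma_t) = I_\mu(\omega) + 2t\!\left(\int u_\omega(\cdot,\mu)\,d\sigma' - I_\mu(\omega)\right) + O(t^2),$$
and since $\int u_\omega(\cdot,\mu)\,d\sigma' \leq V(\mu)-\varepsilon < V(\mu) = I_\mu(\omega)$, the linear term is strictly negative, so $I_\mu(\sigma_t) < V(\mu)$ for small $t$, contradicting the minimality of $V(\mu)$. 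Care is needed that all integrals appearing are finite: $g_\mu$ is bounded below on the compact space $\Xan\times\Xan$ by Proposition~\ref{G}, and $u_\omega(\cdot,\mu)$ is bounded above on $\supp(\omega)$ on the complement of $E$ by construction, which is what one needs to make the $O(t^2)$ term harmless. This yields $u_\omega(x,\mu)\geq V(\mu)$ for $\omega$-almost every $x$.

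Next I would show $u_\omega(\cdot,\mu)\geq V(\mu)$ on all of $\supp(\omega)$. The set where $u_\omega(\cdot,\mu)\geq V(\mu)$ is closed by lower semi-continuity, hence contains the closure of the full-measure set from the previous step; but that closure might not be all of $\supp(\omega)$. The correct way is to argue that $\{u_\omega(\cdot,\mu)<V(\mu)\}$ is an open set (again by lsc) of $\omega$-measure zero, hence disjoint from $\supp(\omega)$. Combined with the reverse inequality we obtain: for every $x\in\supp(\omega)$, $u_\omega(x,\mu) = V(\mu)$, provided we also have the matching upper bound on $\supp(\omega)$. That upper bound comes from integrating: $\int u_\omega(x,\mu)\,d\omega(x) = I_\mu(\omega) = V(\mu)$, while $u_\omega(\cdot,\mu)\geq V(\mu)$ $\omega$-a.e.\ forces equality $\omega$-a.e., and then lsc plus openness of the strict-inequality set promotes this to $u_\omega(\cdot,\mu)\leq V(\mu)$ on $\supp(\omega)$ as well. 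Hence $u_\omega(\cdot,\mu)\equiv V(\mu)$ on $\supp(\omega)$.

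Finally I would invoke Maria's theorem (Theorem~\ref{Maria}) with the constant $M:=V(\mu)$: since $u_\omega(\cdot,\mu)\leq V(\mu)$ on $\supp(\omega)$, Maria gives $u_\omega(\cdot,\mu)\leq V(\mu)$ on all of $\Xan$. For the reverse inequality on $\Xan\setminus\supp(\omega)$, I would use that $u_\omega(\cdot,\mu)$ is subharmonic there (Proposition~\ref{UU}) together with the Maximum Principle (Proposition~\ref{Max}): a subharmonic function on a connected open set that does not attain its supremum in the interior — and whose boundary values (on $\partial(\text{component})\subset\supp(\omega)$) are exactly $V(\mu)$ — cannot dip below $V(\mu)$, because on each connected component $U$ of $\Xan\setminus\supp(\omega)$ one applies the minimum-type argument via the path lemma (Lemma~\ref{path}) or directly the characterization in Proposition~\ref{Max}. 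Actually the cleanest route is: $-u_\omega(\cdot,\mu)$ restricted to $\Xan\setminus\supp(\omega)$ has a local max at an interior point only if locally constant, so any interior point value is controlled by a boundary value $V(\mu)$; thus $u_\omega(\cdot,\mu)\geq V(\mu)$ there too. Combining, $u_\omega(\cdot,\mu)\equiv V(\mu)$ on $\Xan$.

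\emph{Main obstacle.} The delicate point is the first step — justifying the first-variation computation $\frac{d}{dt}I_\mu(\sigma_t)|_{t=0^+} = 2(\int u_\omega\,d\sigma' - I_\mu(\omega))$ rigorously when $g_\mu$ may take the value $+\infty$ on the diagonal. One must check that the quadratic term involving $I_\mu(\sigma')$ is finite, which is where the hypothesis that $u_\omega(\cdot,\mu)$ is bounded \emph{above} off $E$ is essential: on the compact set $e$ one has $u_\omega(\cdot,\mu)\leq V(\mu)-\varepsilon<\infty$, and since $u_\omega(x,\mu)=\int g_\mu(x,y)\,d\omega(y)$, one needs a separate argument (as in \cite[\S 8.10]{BR}, using that $\sigma'$ is absolutely continuous with bounded density with respect to $\omega$) to see $I_\mu(\sigma')<\infty$. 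I expect this finiteness bookkeeping, rather than any conceptual difficulty, to be the crux; everything else is a direct transcription of the Berkovich-line argument using the tools (semicontinuity, Laplacian, Maximum Principle, Maria's theorem) already established above.
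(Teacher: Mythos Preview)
Your first-variation argument correctly shows $\omega(E)=0$ for $E=\{u_\omega(\cdot,\mu)<V(\mu)\}$, and your upper bound via Maria's theorem is fine (indeed $\{u_\omega>V(\mu)\}$ is open by lsc, has $\omega$-measure zero, hence misses $\supp(\omega)$). The problem is the \emph{lower} bound $u_\omega(\cdot,\mu)\geq V(\mu)$ on all of $\Xan$, where your argument breaks in two places.

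First, the claim that $E=\{u_\omega(\cdot,\mu)<V(\mu)\}$ is open ``by lsc'' is false: for a lower semi-continuous function $f$ the sets $\{f>c\}$ are open, not $\{f<c\}$. So you cannot conclude that $E$ is disjoint from $\supp(\omega)$ merely from $\omega(E)=0$. Second, your attempt to propagate the lower bound to $\Xan\setminus\supp(\omega)$ via the Maximum Principle points the wrong way: $u_\omega(\cdot,\mu)$ is \emph{sub}harmonic there (Proposition~\ref{UU}), so Proposition~\ref{Max} controls its maxima, not its minima. Knowing $u_\omega=V(\mu)$ on the boundary of a component and $u_\omega$ subharmonic inside only recovers $u_\omega\leq V(\mu)$, which you already have from Maria. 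There is no minimum principle available, and $-u_\omega(\cdot,\mu)$ is not subharmonic on the complement of $\supp(\omega)$ unless $\mu$ happens to vanish there.

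The paper closes this gap with a genuinely stronger statement than $\omega(E)=0$: it shows that $E$ has \emph{capacity zero}. The perturbation is different from yours --- instead of restricting $\omega$ to $E_n$, one assumes some $E_n$ has positive capacity, takes a probability measure $\nu$ of finite energy supported on $E_n$, and perturbs $\omega$ by a signed measure built from $\nu$ and from $\omega$ restricted to a neighborhood of a point in $\supp(\omega)\setminus E_n$. Capacity zero then forces $E\subset X(K)$ by Lemma~\ref{CapII}, so $u_\omega(\cdot,\mu)\geq V(\mu)$ on $\HM(\Xan)$; the remaining type~I points are handled by the path-continuity property in Proposition~\ref{u1}~iii) (via Lemma~\ref{uuu}), which gives $u_\omega(y,\mu)=\lim_{t\to y,\,t\in[z,y)}u_\omega(t,\mu)\geq V(\mu)$. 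Your variational step gives only the measure-theoretic statement $\omega(E)=0$, which is not enough to run the rest of the argument.
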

\begin{proof}The strategy is as in the proof of \cite[Proposition 8.55]{BR} with using analogous capacity results from Section~\ref{Capacity}.\\[0.2cm]
	
	\textbf{1. Step: } Show that $E:=\{x\in \Xan\mid u_\omega(x,\mu)< V(\mu)\}\subset X(K)$.\\[0.05cm]
          
           By Lemma~\ref{CapII}, it remains to show that $E$ is a proper subset of $\Xan$ of capacity zero.
           Assume that $E\subset \supp(\omega)$, then we get the contradiction
    $$V(\mu)= I_\mu(\omega)=\int \int  g_{\mu}(x,y) ~d\omega (y)d\omega(x)=\int u_\omega(x,\mu)~d\omega(x) < V(\mu).$$
Thus  there has to be a point $\xi \in \supp(\omega)\backslash E$, and so $E$ is indeed a proper subset of $\Xan$. To show that it has capacity zero,
	we consider
	\begin{align*}
	E_n&:= \{x\in \Xan\mid u_\omega(x,\mu)\leq V(\mu)-1/n\}
	\end{align*} for every $n\in  \N_{\geq 1}$.
Clearly, $\xi\notin E_n$ for every $n\in \N_{\geq 1}$.
	Since $ u_\omega(\cdot,\mu)$ is lsc on~$\Xan$ by Proposition~\ref{U}, each $E_n$ is closed and so  compact as a closed subset of a compact space. 
If every $E_n$ has capacity zero, then $E=\bigcup_{n\in \N_{\geq 1}} E_n$ has capacity zero as well by
Corollary~\ref{CC}.

 We therefore assume that there is an $E_n$ with positive capacity, i.e.~there exist  a probability measure $\nu$ supported on~$E_n$, a base point $\zeta_0\in I(\Xan)$ and $\zeta\in \Xan\backslash E_n$ such that $ I_{\zeta_0,\zeta}(\nu)<\infty$.
Since $E_n$ is closed and $I(\Xan)$ is a dense subset of $\Xan$, we may choose $\zeta_0=\zeta\in  I(\Xan)\backslash E_n$ by  Remark~\ref{CapP}. Then
\begin{align}\label{doppelt}
I_{\zeta_0,\zeta_0}(\nu)=\int\int g_{\zeta_0}(\zeta_0,x,y)~d\nu(x)d\nu(y)=\int\int g_{\zeta_0}(x,y)~d\nu(x)d\nu(y)<\infty,\end{align}
where  we used $g_{\zeta_0}(\zeta_0,x,y)=g_{\zeta_0}(x,y)$ from Corollary~\ref{3Var}.
	We can write by the definition of the Arakelov--Green's function $ g_{\mu}$
	\begin{align*}
	I_\mu(\nu) =  I_{\zeta_0,\zeta_0}(\nu)-2 \int\int g_{\zeta_0}(x,\zeta)~d\mu(\zeta)d\nu(x)+C_{\zeta_0}.
	\end{align*}
Since $\mu$ has continuous potentials, the term $2 \int\int g_{\zeta_0}(x,\zeta)~d\mu(\zeta)d\nu(x)$ is finite. Hence 
 $I_{\zeta_0,\zeta_0}(\nu)<\infty$ implies $I_\mu(\nu) < \infty$. 

	Recall that $\xi$ is a point in $\supp(\omega)\backslash E_n$ and $u_\omega(\xi,\mu)\geq V(\mu)$. 
Since  $u_\omega(\cdot,\mu)$ is lsc on~$\Xan$ by Proposition~\ref{U}, we can find an open neighborhood $U$ of $\xi$ such that $u_\omega(\cdot,\mu) > V(\mu)-1/(2n)$ on~$\overline{U}$. Then $\overline{U}\cap E_n=\emptyset$ and $M:=\omega(\overline{U})> 0$ using that $\omega$ is a positive measure and $\xi\in\overline{U}\cap \supp(\omega)$.  We define the following measure on~$\Xan$
	\begin{align*}
	\sigma := \begin{cases}
	M\cdot \nu & \text{ on } E_n,\\
	-\omega & \text{ on } \overline{U},\\
	0 &\text{ elsewhere.}
	\end{cases}
	\end{align*}
	Then $\sigma(\Xan)=M\cdot \nu(E_n)-\omega(\overline{U})=0$ as $\nu$ is a probability measure supported on~$ E_n$. 
Moreover, we can consider
	\begin{align*}
	I_\mu(\sigma) & := \int \int g_\mu(x,y)~d\sigma(x)d\sigma(y) \\
	&~ = M^2\cdot \int_{E_n} \int_{E_n} g_\mu(x,y)~d\nu(x)d\nu(y) - 2M\cdot \int_{E_n}\int_{\overline{U}} g_\mu(x,y)~d\nu(x)d\omega(y)\\
	& ~~~  + \int_{\overline{U}}\int_{\overline{U}} g_\mu(x,y)~d\omega(x)d\omega(y).
	\end{align*} 
We will explain why $I_\mu(\sigma)$ is finite. Note that $g_\mu$ is lsc on the compact space $\Xan\times \Xan$ (cf.~Proposition~\ref{G}), and so bounded from below.
The first term is equal to $M^2 \cdot I_\mu(\nu)$, and we have already seen that $I_\mu(\nu)<\infty$. 
Since $g_\mu$ is bounded from below and $\nu$ is a positive measure, the first term is finite. 
	The second term is finite because $\overline{U}$ and $E_n$ are compact disjoint sets and $g_\mu$ is continuous off the diagonal (see Proposition~\ref{G}).
	The third term has to be finite as well as $g_\mu$ is bounded from below, $\omega$ is a positive measure, and we have  $$\int\int g_\mu(x,y)~d\omega(x)d\omega(y)=I_\mu(\omega)=V(\mu)\in \R$$ by Lemma~\ref{Vmu}.
Consequently, $I_\mu(\sigma)$ is finite.

	For every $t\in [0,1]$, we define the probability measure $\omega_t:=\omega +t\sigma$ on~$\Xan$. 
	Then 
	\begin{align*}
	I_\mu(\omega_t)- I_\mu(\omega) & = \int\int g_\mu(x,y)~d\omega_t(x)d\omega_t(y) - \int\int g_\mu(x,y)~d\omega(x)d\omega(y)  \\& = \int\int g_\mu(x,y)~d\omega(x)d\omega(y) + 2\int\int g_\mu(x,y)~d\omega(x)d(t\sigma)(y)  \\
&~~~+ \int\int g_\mu(x,y)~d(t\sigma)(x)d(t\sigma)(y) - \int\int g_\mu(x,y)~d\omega(x)d\omega(y) \\
%& = 2t\cdot \int\int g_\mu(x,y)~d\omega(x)d\sigma(y)+t^2 \cdot I_\mu(\sigma)\\
&= 2t\cdot \int u_\omega(y,\mu)~d\sigma(y)+t^2 \cdot I_\mu(\sigma).
	\end{align*} Inserting the definition of the measure $\sigma$, we obtain
\begin{align*}
	I_\mu(\omega_t)- I_\mu(\omega)  = 2t\cdot \left( M\cdot \int_{E_n} u_\omega(y,\mu)~d\nu(y) - \int_{\overline{U}} u_\omega(y,\mu)~d\omega(y) \right)+t^2 \cdot I_\mu(\sigma).
	\end{align*} 
Since $u_\omega(\cdot,\mu) \leq V(\mu)-1/n$  on~$E_n$ and $\supp(\nu)\subset E_n$, $u_\omega(\cdot,\mu) > V(\mu)-1/(2n)$ on~$\overline{U}$ and $M=\omega(\overline{U})>0$, we get
	\begin{align*}
	I_\mu(\omega_t)- I_\mu(\omega)
	& \leq  2t\cdot ( M\cdot( V(\mu)-1/n)- M\cdot(V(\mu)-1/(2n)))+t^2 \cdot I_\mu(\sigma)\\
	& = (-M/n)\cdot t +t^2 \cdot I_\mu(\sigma).
	\end{align*} 
	The right hand  side is negative for sufficiently small $t>0$ as $I_\mu(\sigma)$ is finite, and so this contradicts $I_\mu(\omega)=V(\mu)$.
	Hence each $E_n$ has capacity zero, and so does $E$.
	By Lemma~\ref{CapII}, we get $E\cap \HM(\Xan)=\emptyset$.\\[0.3cm]
	
	\textbf{2. Step: } Show that $\omega(E)=0$. \\[0.05cm]	

Pick a base point  $\zeta_0\in I(\Xan)$.
We have seen in Step 1 that $E\subset X(K)$, so $\zeta_0$ cannot by contained in $E$.
Because of $ I_{\zeta_0,\zeta_0}(\omega)=\int \int g_{\zeta_0}(x,y) ~d\omega(x)d\omega(y)$ by Corollary~\ref{3Var}, we have
	\begin{align*}
	I_\mu(\omega) & = \int \int  g_{\mu}(x,y) ~d\omega(y)d\omega(x)\\
	& =\int \int g_{\zeta_0}(x,y) ~d\omega(y)d\omega(x)- \int \int g_{\zeta_0}(x,\zeta)~d\mu(\zeta) d\omega(x)\\
	& ~~~-\int \int g_{\zeta_0}(y,\zeta)~d\mu(\zeta) d\omega(y)+C_{\zeta_0}\\
	%& =  \int\int g_{\zeta_0}(x,y)~d\omega(x)d\omega(y)-2 \int\int g_{\zeta_0}(x,\zeta)~d\mu(\zeta)d\omega(x)+C_{\zeta_0}\\
	& = I_{\zeta_0,\zeta_0}(\omega)-2 \int\int g_{\zeta_0}(x,\zeta)~d\mu(\zeta)d\omega(x)+C_{\zeta_0},
	\end{align*} where the double integral is finite since $\mu$ has continuous potentials.
As $I_\mu(\omega)=V(\mu)$ is finite by Lemma~\ref{Vmu}, it follows  directly from the calculation that $I_{\zeta_0,\zeta_0}(\omega)< \infty$.
Moreover, we have seen in the proof of Step 1 that $E$ has capacity zero and we also know that $\zeta_0\notin E$.  Lemma~\ref{LC} yields $\omega(E)=0$.\\[0.2cm]

	\textbf{3. Step: } Show that $u_\omega(\cdot,\mu)\leq V(\mu)$ on~$\Xan$. \\[0.05cm]
	
Using Maria's theorem~\ref{Maria}, it remains to prove $u_\omega(\cdot,\mu)\leq V(\mu)$ on~$\supp(\omega)$. 
	Assume there is a point $z\in \supp(\omega)$ such that  $u_\omega(z,\mu)> V(\mu)$. Choose $\varepsilon>0$ such that
	$u_\omega(z,\mu)> V(\mu)+\varepsilon.$ 
	Since $u_\omega(\cdot,\mu)$ is lsc on~$\Xan$ by Proposition~\ref{U}, there is an open neighborhood $U_z$ of $z$ with $u_\omega(\cdot,\mu)> V(\mu)+\varepsilon$ on~$U_z$.
	Then $\omega(U_z)>0$ as $z\in\supp(\omega)$.
	By the construction of $E$, we have $u_\omega(\cdot,\mu)< V(\mu)$ on~$E$.
	Hence $E$ and $U_z$ are disjoint and we get the following decomposition of $V(\mu)=I_\mu(\omega)$ 
\begin{align*}
	V(\mu)&= \int_{\Xan}  u_\omega(x,\mu)~d\omega(x)\\
	& =  \int_{U_z}  u_\omega(x,\mu)~d\omega(x) + \int_{\Xan\backslash (U_z\cup E)}  u_\omega(x,\mu)~d\omega(x).
	\end{align*}
	Note that we also use that  the integral of $u_\omega(\cdot,\mu)$ over $E$ has to be zero as $\omega(E)=0$ by Step 2.
For the first term we know that  $u_\omega(\cdot,\mu)> V(\mu)+\varepsilon$ on~$U_z$ and $\omega(U_z)>0$.
Thus 
\begin{align}\label{1}
\int_{\Xan}  u_\omega(x,\mu)~d\omega(x) \geq \omega(U_z)\cdot (V(\mu)+\varepsilon).\end{align}
We have $u_\omega(\cdot,\mu)\geq V(\mu)$ on~$\Xan\backslash E$ by the definition of $E$, and so 
\begin{align}\label{2}\int_{\Xan\backslash (U_z\cup E)}  u_\omega(x,\mu)~d\omega(x) \geq (1-\omega(U_z)-\omega(E))\cdot V(\mu).\end{align}
Putting  (\ref{1}), (\ref{2}) and $\omega(E)=0$ together, 
we get the contradiction
	\begin{align*}
	V(\mu)&\geq \omega(U_z)\cdot (V(\mu)+\varepsilon) + (1-\omega(U_z)-\omega(E))\cdot V(\mu) \\
	& =\omega(U_z)\cdot (V(\mu)+\varepsilon) + (1-\omega(U_z))\cdot V(\mu)\\
	& = V(\mu)+ \omega(U_z)\varepsilon > V(\mu).
	\end{align*}
	Hence $u_\omega(\cdot,\mu)\leq V(\mu)$ on~$\supp(\omega)$.
		Maria's theorem~\ref{Maria},  implies that $u_\omega(\cdot,\mu)\leq V(\mu)$ on~$\Xan$.
		This shows the third step.
	\\[0.1cm]

	By the first step we know that $u_\omega(\cdot,\mu)\geq V(\mu)$ on~$\Xan\backslash X(K)$.
	For every point $y\in X(K)$, we can find  a path $[z,y]$ from a point $z\in I(\Xan)$ to $y$ such that $[z,y)$  is contained in $I(\Xan)\subset \Xan\backslash X(K)$. 
	Then Proposition~\ref{U} implies $$u_\omega(y,\mu)=\lim_{x\in [z,y)}u_\omega(x,\mu)\geq V(\mu).$$
	Hence $E=\{x\in \Xan\mid u_\omega(x,\mu)< V(\mu)\}$ is empty, and so  $u_\omega(\cdot,\mu)\geq V(\mu)$ on~$\Xan$. Step~3 implies $u_\omega(\cdot,\mu)\equiv V(\mu)$ on~$\Xan$.
\end{proof}

\begin{proof}[Proof of Theorem~\ref{THM}]
	Let $\omega$ be a probability measure on~$\Xan$ that minimizes the energy integral, i.e.~$I_\mu(\omega)=V(\mu)$.
	Such a measure always exists by Lemma~\ref{Vmu}. 
	By Frostman's theorem~\ref{Frostman}, $u_\omega(\cdot,\mu)$ is constant on $\Xan$, and hence 
	$$0=dd^c u_\omega(\cdot,\mu) = \mu-\omega$$ by Proposition~\ref{U}.
	Thus $\omega$ minimizes the energy integral if and only if $\omega =\mu$.
	Since $I_\mu(\mu)=\int \int  g_{\mu}(x,y) ~d\mu (y)d\mu(x)=0$ by the normalization of the Arakelov--Green's function $g_\mu$, it follows that $I_\mu(\nu)\geq 0$ for every probability measure $\nu$ on~$\Xan$.
\end{proof}

\begin{kor}
Let $\zeta\in I(\Xan)$ and $\mu$ be a probability measure on~$\Xan$ with continuous potentials. Then $g_{\mu}(\zeta,\zeta)\geq 0$, and $g_{\mu}(\zeta,\zeta)=0$ if and only if $\mu=\delta_{\zeta}$.
\end{kor}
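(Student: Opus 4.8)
The plan is to derive this immediately from the Energy Minimization Principle (Theorem~\ref{THM}) by plugging in the Dirac measure $\nu=\delta_\zeta$. First I would note that since $\zeta\in I(\Xan)\subset\HM(\Xan)$, the point $\zeta$ is of type II or III, so by Proposition~\ref{G'} (applied with $y=\zeta$) the function $G_{\mu,\zeta}=g_\mu(\cdot,\zeta)$ is finitely valued away from $X(K)$; in particular $g_\mu(\zeta,\zeta)\in\R$, and this value is finite. Alternatively, one may invoke Proposition~\ref{G}: $g_\mu$ is finite and continuous off the diagonal and strongly lower semi-continuous on the diagonal, and the only diagonal points where it takes the value $\infty$ are the type~I points; since $\zeta\notin X(K)$, we get $g_\mu(\zeta,\zeta)<\infty$.

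Next I would simply compute the $\mu$-energy integral of $\delta_\zeta$: since $\delta_\zeta$ is a probability measure on~$\Xan$,
$$I_\mu(\delta_\zeta)=\int\int g_\mu(x,y)~d\delta_\zeta(y)d\delta_\zeta(x)=g_\mu(\zeta,\zeta),$$
which is well-defined because $g_\mu$ is lower semi-continuous, hence Borel measurable, by Proposition~\ref{G}. Now Theorem~\ref{THM}~i) applied to $\nu=\delta_\zeta$ yields $g_\mu(\zeta,\zeta)=I_\mu(\delta_\zeta)\geq 0$. For the equality case, Theorem~\ref{THM}~ii) says $I_\mu(\delta_\zeta)=0$ if and only if $\delta_\zeta=\mu$, i.e.\ $g_\mu(\zeta,\zeta)=0$ if and only if $\mu=\delta_\zeta$.

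There is essentially no obstacle here: the statement is a direct specialization of Theorem~\ref{THM} to a one-point measure, and the only thing worth a sentence is the observation that $g_\mu(\zeta,\zeta)$ is a genuine (finite) real number because $\zeta$ is a point of type II or III, so that the inequality and the equality characterization are not vacuous. I expect the whole proof to be two or three lines.
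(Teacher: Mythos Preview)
Your proposal is correct and matches the paper's own proof essentially line for line: the paper also writes $g_{\mu}(\zeta,\zeta)=I_{\mu}(\delta_{\zeta})$ and invokes the Energy Minimization Principle (Theorem~\ref{THM}) directly. Your added remark about finiteness of $g_\mu(\zeta,\zeta)$ is a nice clarification but not needed for the argument.
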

\begin{proof}
Since 
$$g_{\mu}(\zeta,\zeta)=\int\int g_\mu(x,y)~d\delta_{\zeta}(x)d\delta_{\zeta}(y)= I_{\mu}(\delta_{\zeta}),$$ the Energy Minimization Principle (Theorem~\ref{THM}) gives the assertion immediately.
\end{proof}

\section{Local discrepancy}\label{SubLD}
Let $E$ be an elliptic curve over $K$ with $j$-invariant $j_E$. 
In this section, we give a different proof of the local discrepancy result from \cite[Corollary 5.6]{BP} using our Energy Minimization Principle (Theorem~\ref{THM}).

\begin{bem}
	In the following, let $\Gamma_E$ be the minimal skeleton of $E^{\an}$. 
	Then $\Gamma_E$ is a single point $\zeta_0$ when $E$ has good reduction and $\Gamma_E$ corresponds to the circle $\R /\Z$ when it has multiplicative reduction.
	One has a canonical probability measure $\mu_E$ supported on~$\Gamma_E$, where 
	\begin{enumerate}
		\item $\mu_E$ is the dirac measure in $\zeta_0$ if $E$ has good reduction, and
		\item $\mu_E$ is the uniform probability measure (i.e.~Haar measure) supported on the circle $\Gamma_E\simeq \R /\Z$ if $E$ has multiplicative reduction.
	\end{enumerate}
Then $\mu_E$ has in particular continuous potentials  by Example~\ref{log-c}. 
Hence we can consider its corresponding Arakelov--Green's function $g_{\mu_E}$ on  $E^{\an}\times E^{\an}$.
\end{bem}

\begin{defn}Let $Z=\{P_1,\ldots,P_N\}$ be a set of $N$ distinct points in $E(K)$.
Then the \emph{local discrepancy} of $Z$ is defined as 
$$D(Z):=\frac{1}{N^2}\left( \sum_{i\neq j}g_{\mu_E}(P_i,P_j) + \frac{N}{12}\log ^+|j_E|\right).$$
\end{defn}

\begin{bem}Baker and Petsche defined  the local discrepancy in \cite[\S 3.4]{BP} and \cite[\S 2.2]{Pe} of a set $Z=\{P_1,\ldots,P_N\}$ of $N$ distinct points in $E(K)$ as
	$$\frac{1}{N^2}\left( \sum_{i\neq j}\lambda(P_i-P_j) + \frac{N}{12}\log ^+|j_E|_v\right)$$ for the N\'eron function $\lambda\colon E(K)\backslash \{O\}\to \R$ (cf.~\cite[\S VI.1]{Sil}).	
	
Note that our definition is consistent with theirs. As it is also mentioned in \cite[Remark 5.3]{BP}, the N\'eron function can be extend to an Arakelov--Green's function corresponding to the canonical measure $\mu$ on  $E^{\an}$. By the uniqueness of the Arakelov--Green's function (see Remark~\ref{AGunique}), we have $g_{\mu_E}(P,Q)=\lambda(P-Q)$ for  $P\neq Q\in E(K)$.
\end{bem}

Baker and Petsche showed in \cite[Corollary 5.6]{BP} the following result for the local discrepancy when $K=\C_v$. 
Here,~$v$ is a non-archimedean place of a number field $k$ and $\C_v$ is the completion of the algebraic closure of the completion of $k$ with respect to $v$.
We can prove this statement for our general $K$ using our characterization of the local discrepancy and the Energy Minimization Principle (Theorem~\ref{THM}).

\begin{kor}\label{KorLD}
	For each $n\in \N$, let $Z_n\subset E(K)$  be a set consisting of $n$ distinct points and let $\delta_n$ be the probability measure on~$E^{\an}$ that is equidistributed on~$Z_n$. If $\lim_{n\to \infty} D(Z_n)=0$, then $\delta_n$ converges weakly to $\mu_E$ on~$E^{\an}$.
\end{kor}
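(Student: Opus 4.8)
The plan is to translate the hypothesis into a statement about $\mu_E$-energy integrals and then apply the Energy Minimization Principle (Theorem~\ref{THM}) together with the weak compactness of the space of probability measures. Note first that $\mu_E$ has continuous potentials (Example~\ref{log-c}, as it is supported on the skeleton $\Gamma_E$), so by Corollary~\ref{AG} the function $g_{\mu_E}$ is the normalized Arakelov--Green's function attached to $\mu_E$ and Theorem~\ref{THM} applies. Writing $\delta_n=\tfrac1n\sum_{P\in Z_n}\delta_P$, the $n$ points of $Z_n$ are of type~I, so $g_{\mu_E}(P,P)=\infty$ for each $P\in Z_n$, and hence the full double integral $I_{\mu_E}(\delta_n)$ equals $+\infty$; the quantity that actually controls the measures is the off-diagonal sum
$$\widetilde I_{\mu_E}(\delta_n):=\frac{1}{n^2}\sum_{\substack{P,Q\in Z_n\\ P\neq Q}}g_{\mu_E}(P,Q),$$
which is a finite real number because the points of $Z_n$ are distinct and $g_{\mu_E}$ is finite off the diagonal (Proposition~\ref{G}). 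By definition $D(Z_n)=\widetilde I_{\mu_E}(\delta_n)+\tfrac{1}{12n}\log^+|j_E|$, and as $j_E$ is fixed the last summand tends to $0$; thus the hypothesis $\lim_n D(Z_n)=0$ is equivalent to $\lim_n\widetilde I_{\mu_E}(\delta_n)=0$.

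Next I would take an arbitrary subnet $(\delta_{n_\alpha})_\alpha$ of $(\delta_n)_n$ converging weakly to a probability measure $\nu$ on~$E^{\an}$, which exists by Proposition~\ref{Proho}, and show that $\nu=\mu_E$. By Proposition~\ref{G}, $g_{\mu_E}$ is lower semi-continuous on the compact space $E^{\an}\times E^{\an}$, hence bounded below by some $M\in\R$, so \cite[Proposition~A.3]{BR} gives
$$I_{\mu_E}(\nu)=\sup\Big\{\,\int\!\!\int\varphi(x,y)\,d\nu(x)d\nu(y)\ \Big|\ \varphi\in\CS^0(E^{\an}\times E^{\an}),\ M\le\varphi\le g_{\mu_E}\,\Big\}.$$
Fix such a continuous $\varphi$. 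Since $\delta_{n_\alpha}\times\delta_{n_\alpha}$ converges weakly to $\nu\times\nu$ (as in \cite[Lemma~6.5]{BR}) and $\varphi$ is continuous on the compact product, $\int\!\!\int\varphi\,d\nu\,d\nu=\lim_\alpha\int\!\!\int\varphi\,d\delta_{n_\alpha}\,d\delta_{n_\alpha}$; separating in $\int\!\!\int\varphi\,d\delta_{n_\alpha}\,d\delta_{n_\alpha}=\tfrac{1}{n_\alpha^2}\sum_{P,Q\in Z_{n_\alpha}}\varphi(P,Q)$ the diagonal terms from the rest, and using $\varphi\le g_{\mu_E}$ off the diagonal together with $\varphi(P,P)\le\|\varphi\|_\infty$, yields
$$\int\!\!\int\varphi\,d\delta_{n_\alpha}\,d\delta_{n_\alpha}\ \le\ \widetilde I_{\mu_E}(\delta_{n_\alpha})+\frac{\|\varphi\|_\infty}{n_\alpha}\ \xrightarrow{\ \alpha\ }\ 0,$$
where we used that $\widetilde I_{\mu_E}(\delta_{n_\alpha})\to 0$, being a subnet of the sequence tending to $0$. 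Hence $\int\!\!\int\varphi\,d\nu\,d\nu\le 0$ for every admissible $\varphi$, so $I_{\mu_E}(\nu)\le 0$; together with $I_{\mu_E}(\nu)\ge 0$ from Theorem~\ref{THM}(i) this forces $I_{\mu_E}(\nu)=0$, and then Theorem~\ref{THM}(ii) gives $\nu=\mu_E$.

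Finally I would conclude by a soft topological argument: the set of probability measures on~$E^{\an}$ with the weak topology is compact by Proposition~\ref{Proho}, and every cluster point of the net $(\delta_n)_n$ is the weak limit of some subnet, hence equals $\mu_E$ by the previous step; since a net in a compact space with a unique cluster point converges to it, $\delta_n$ converges weakly to $\mu_E$. The only delicate point in the whole argument is the treatment of the diagonal: because $g_{\mu_E}$ takes the value $+\infty$ on the diagonal of $X(K)\times X(K)$, one cannot work with $I_{\mu_E}(\delta_n)$ directly, and it is crucial to pass through continuous minorants $\varphi$, for which the diagonal contribution $\tfrac{1}{n^2}\sum_{P\in Z_n}\varphi(P,P)$ is only $O(1/n)$ and therefore disappears in the limit.
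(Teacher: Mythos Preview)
Your proof is correct and follows the same strategy as the paper: show that any weak cluster point $\nu$ of $(\delta_n)$ satisfies $I_{\mu_E}(\nu)\le 0$, then invoke the Energy Minimization Principle to conclude $\nu=\mu_E$. The only difference is cosmetic: the paper obtains the key inequality $I_{\mu_E}(\nu)\le\liminf_n\widetilde I_{\mu_E}(\delta_n)$ by a direct citation of \cite[Lemma~7.54]{BR}, whereas you unpack that lemma inline via the continuous-minorant representation from \cite[Proposition~A.3]{BR} and an explicit diagonal/off-diagonal split; you are also more careful about the final compactness/subnet argument, which the paper compresses into the phrase ``by passing to a subsequence we may assume''.
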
 
\begin{proof}By passing to a subsequence we may assume that $\delta_n$ converges weakly to a probability measure $\nu$ on~$E^{\an}$ (see Proposition~\ref{Proho}).	
We show that $I_{\mu_E}(\nu)$ is zero and we then use the Energy Minimization Principle~\ref{THM}.
We have seen  in the Energy Minimization Principle that $I_{\mu_E}(\nu)\geq 0$.
Thus it remains to show $I_{\mu_E}(\nu)\leq 0$.
Due to the definition of the $\mu_E$-energy integral and  \cite[Lemma 7.54]{BR}, the following inequality holds
\begin{align*}
	I_{\mu_E}(\nu)&=\int \int_{E^{\an}\times E^{\an}} g_{\mu_E}(x,y)~d\nu(x)d\nu(y)\\
	&\leq \liminf_{n\to \infty} \int \int_{(E^{\an}\times E^{\an})\backslash \Delta } g_{\mu_E}(x,y)~d\delta_n(x)d\delta_n(y)\\
	&=\liminf_{n\to \infty} \frac{1}{n^2}\sum_{P\neq Q\in Z_n}g_{\mu_E}(P,Q),
\end{align*}	
	where $\Delta:=\mathrm{Diag}(E^{\an})$.
Since $D(Z_n)=\frac{1}{n^2}\sum_{P\neq Q\in Z_n}g_{\mu_E}(P,Q) + \frac{1}{12n}\log ^+|j_E|$ converges to zero, and $\frac{1}{12n}\log ^+|j_E|$ does as well,  we have \begin{align*}
\lim_{n\to \infty} \frac{1}{n^2}\sum_{P\neq Q\in Z_n}g_{\mu_E}(P,Q) =0.
\end{align*}
Hence $I_{\mu_E}(\nu)\leq 0$. 
The Energy Minimization Principle yields $\mu_E=\nu$.
\end{proof}

\bibliographystyle{alpha}

\begin{thebibliography}{ABBR15}



\bibitem[Ber90]{BerkovichSpectral}
Vladimir~G. Berkovich.
\newblock Spectral theory and analytic geometry over non-{A}rchimedean
  fields, volume~33 of  Mathematical Surveys and Monographs.
\newblock American Mathematical Society, Providence, RI, 1990.




\bibitem[BP05]{BP}
Matthew Baker and Clayton Petsche.
\newblock Global discrepancy and small points on elliptic curves.
\newblock Int. Math. Res. Not., (61):3791-3834, 2005.

\bibitem[BPR13]{BPR2}
Matthew Baker, Sam Payne, Joseph Rabinoff.
\newblock On the structure of non-{A}rchimedean analytic curves.
\newblock In  Tropical and non-{A}rchimedean geometry, volume 605 of 
  Contemp. Math., pages 93--121. Amer. Math. Soc., Providence, RI, 2013.

\bibitem[BPR16]{BPR1}
Matthew Baker, Sam Payne, and Joseph Rabinoff.
\newblock Nonarchimedean geometry, tropicalization,
and metrics on curves. 
\newblock Algebr. Geom., 3(1):63-105, 2016.

\bibitem[BR06]{BR2}
Matthew Baker and Robert Rumely. 
\newblock Equidistribution of small points, rational dynamics,
and potential theory.
\newblock  Ann. Inst. Fourier (Grenoble), 56(3):625-688, 2006.

\bibitem[BR10]{BR}
Matthew Baker, Robert Rumely.
\newblock  Potential theory and dynamics on the {B}erkovich projective
  line, volume 159 of  Mathematical Surveys and Monographs.
\newblock American Mathematical Society, Providence, RI, 2010.


\bibitem[CD12]{CLD}
Antoine Chambert-Loir, Antoine Ducros.
\newblock  Formes diff\'{e}rentielles r\'{e}elles et courants sur les espaces de
{B}erkovich.
\newblock 2012.
\newblock \url{http://arxiv.org/abs/1204.6277}.



\bibitem[FJ04]{FJ}
Charles Favre, Mattias Jonsson.
\newblock   The valuative tree.
\newblock Volume 1853 of Lecture Notes in Mathematics.
Springer-Verlag, Berlin, 2004.

\bibitem[Fol99]{Fo}
Gerald B. Folland.
\newblock  Real analysis.
\newblock Pure and Applied Mathematics (New York). John
Wiley \& Sons Inc., New York, second edition, 1999. 

\bibitem[FR06]{FR}
Charles Favre, Juan Rivera-Letelier.
\newblock   Equidistribution quantitative des points de petite
hauteur sur la droite projective. 
\newblock Math. Ann., 335(2):311-361, 2006.



\bibitem[Pet09]{Pe}
Clayton Petsche.
\newblock Nonarchimedean equidistribution on elliptic curves with global applications.
\newblock Pacific J. Math., 242(2):345-375, 2009.

\bibitem[Roy88]{Roy88}
 Halsey L. Royden. 
\newblock Real analysis.
\newblock  Macmillan Publishing Company, New York, third edition, 1988.

\bibitem[Sil94]{Sil}
 Joseph H. Silverman.
\newblock  Advanced topics in the arithmetic of elliptic curves. 
\newblock Volume 151 of
Graduate Texts in Mathematics. Springer-Verlag, New York, 1994.

\bibitem[Thu05]{Th}
Amaury Thuillier.
\newblock Th\'eorie du potentiel sur les courbes en g\'eom\'etrie analytique non
archim\'edienne. Applications \`a la th\'eorie d' Arakelov. 
\newblock Th\'ese de l'Universit\'e de Rennes 1 (2005).

\bibitem[Vaq00]{Va}
Michel Vaqui\'e. Valuations. 
\newblock In Resolution of singularities (Obergurgl, 1997).
\newblock Volume 181 of
Progr. Math., pages 539-590. Birkh\"auser, Basel, 2000.

\bibitem[Wan18]{Wa18}
Veronika Wanner.
\newblock Comparison of two notions of subharmonicity on non-archimedean curves.
\newblock Mathematische Zeitschrift, 293(1), 443-474, 10.1007/s00209-018-2205-z, 2018.


\bibitem[Wan19]{Wa}
Veronika Wanner.
\newblock Subharmonic functions and differential forms on non-archimedean curves. PhD Thesis.
\newblock 2019.
\newblock \url{https://epub.uni-regensburg.de/40530/1/THESIS.pdf}.


\end{thebibliography}
\def\cprime{$'$}

\end{document}